\newtheorem{Th}{Theorem}[section]
\newtheorem{lem}[Th]{Lemma}
\theoremstyle{definition}
\newtheorem{Def}[Th]{Definition}
\newtheorem{Cor}[Th]{Corollary}
\newtheorem{Prop}[Th]{Proposition}
\theoremstyle{remark}
\newtheorem*{rem}{\bf Remark}
\newtheorem*{que}{\bf Questions}
\newtheorem{Conj}{\bf Conjecture of Chowla}
\newtheorem*{thank}{\ \ \ \bf Acknowledgment}
\numberwithin{equation}{section}
\newcommand{\xdownarrow}[1]{%
  {\left\downarrow\vbox to #1{}\right.\kern-\nulldelimiterspace }
}
\newcommand{\tend}[3][]{\xrightarrow[#2\to#3]{#1}}
\newcommand{\tenddw}[1][]{\xdownarrow{#1}}
\newcommand{\egdef}{\stackrel{\textrm {def}}{=}}
\newcommand{\ds}{\displaystyle}
\newcommand{\1}{\mathbb{1}}
\renewcommand{\P}{\mathbb{P}}
\newcommand{\Z}{\mathbb{Z}}
\newcommand{\re}{\textrm {Re}}
\newcommand{\N}{\mathbb{N}}
\newcommand{\T}{\mathbb{T}}
\newcommand{\C}{\mathbb{C}}
\newcommand{\nB}{\mathcal{NB}}
\newcommand{\cl}{\mathcal{L}}
\newcommand{\fr}{\textrm{fr}}
\def \equi#1{\mathrel{\mathop{\kern 0pt\sim}\limits_{#1}}}
\newcommand{\mob}{\boldsymbol{\mu}}
\newcommand{\lamob}{\boldsymbol{\lambda}}
\newcommand{\setdef}{\stackrel {\rm {def}}{=}}
\title[On the Erd\"{o}s flat polynomials problem]{On the Erd\"{o}s flat polynomials problem, Chowla conjecture and Riemann Hypothesis}
\author{e. H. el Abdalaoui${}^{\bigstar}$}
\address{University of Rouen Normandy,
  Department of Mathematics, LMRS  UMR 60 85 CNRS\\
Avenue de l'Universit\'e, BP.12
76801 Saint Etienne du Rouvray - France .}
\email{elhoucein.elabdalaoui@univ-rouen.fr}
\urladdr{http://www.univ-rouen.fr/LMRS/Persopage/Elabdalaoui/}
\date{\today}
\subjclass[2010]{Primary 42A05, 42A55, 05D99, Secondary 37A05, 37A30.}
\dedicatory{\setlength\epigraphrule{0pt}\epigraph{\textbf{``The hardest thing of all is to find a 
black cat in a dark room, especially if there is no cat.''}}{{---Confucius}}}
\keywords{ Merit factor, flat polynomials, ultraflat polynomials, Erd\"{o}s-Newman flatness problem, 
Littlewood flatness problem, digital transmission, Morse cocycle, Turyn-Golay's conjecture, simple Lebesgue spectrum, 
Banach problem, Banach-Rhoklin problem, singular spectrum, Barker sequences, Morse sequences, Liouville function, M\"{o}bius function,
Chowla conjecture, Riemann Hypothesis (RH).\\
%P. Erd\"{o}s :``The purpose of life is to conjecture and prove, and to keep the SF's score low.
\rule{0.2\textwidth}{.4pt}\hfill \rule{0.2\textwidth}{.4pt} 
\vskip 0.1cm 
$${\bigstar}_{\includegraphics[scale=0.25]{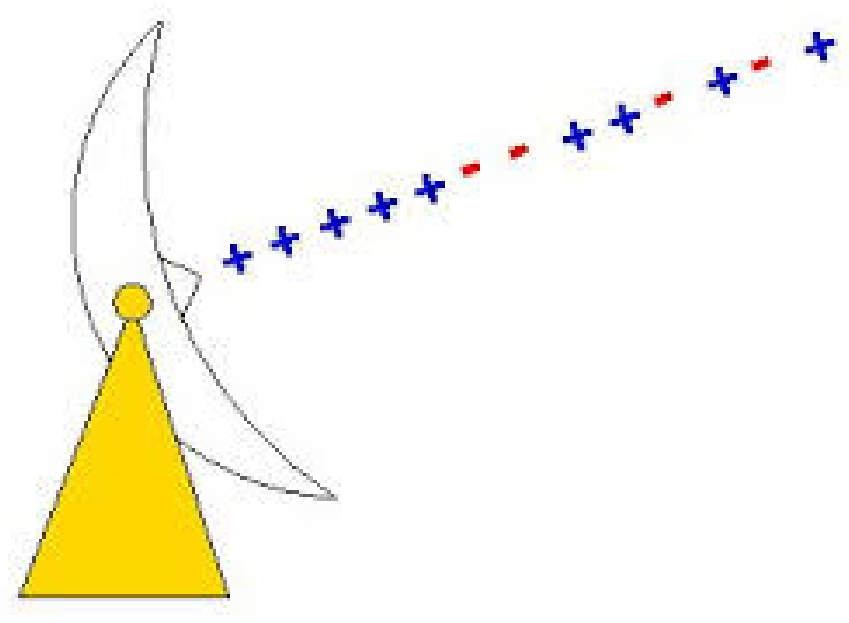}}
$$
}
\begin{document}
\begin{abstract} 
There are no square $L^2$-flat sequences of polynomials of the type
  $$\frac{1}{\sqrt q}( \epsilon_0 + \epsilon_1z + \epsilon_2z^2 + \cdots + \epsilon_{q-2}z^{q-2} +\epsilon_q z^{q-1}),$$
where for each $j,~~ 0 \leq j\leq q-1,~\epsilon_j = \pm 1$. It follows that Erd\"{o}s's conjectures on Littlewood polynomials hold. 
Consequently, Turyn-Golay's conjecture is true, that is, there are only finitely many Barker sequences. 
We further get that the spectrum of dynamical systems arising from continuous Morse sequences is singular. 
This settles an old question due to M. Keane. Applying our reasoning to the Liouville function we obtain
that the popular Chowla conjecture on the %Bernouillicity 
normality of the Liouville function implies Riemann hypothesis.
\end{abstract}
\maketitle
\section{Introduction.}
The main purpose of this paper is to establish that ~there are no square $L^2$-flat sequences of polynomials of the type
\begin{eqnarray}\label{Littlewood}
  \frac{1}{\sqrt q}( \epsilon_0 + \epsilon_1z + \epsilon_2z^2 + \cdots + \epsilon_{q-2}z^{q-2} + \epsilon_{q-1} z^{q-1}),
\end{eqnarray}
where for each $j,~~~ 0 \leq j\leq q-1,~\epsilon_j = \pm 1$ and $z \in S^1,$  $S^1$ is the circle group. It follows that there are
only finitely many Barker sequences. We thus get an affirmative answer to Turyn-Golay's conjecture and Erd\"{o}s's conjectures. 
Furthermore, our result implies that the spectrum of dynamical system arising from generalized Morse sequences is singular for 
every continuous Morse sequence. This answer an old question due to M. Keane \cite{Keane}.
\vskip 0.1cm

%Precisely, M. Keane investigated in his 1968's famous paper \cite{Keane} the spectral properties of the dynamical system arising from generalized Morse sequences, and asked, among several questions, whether the spectrum of this class of dynamical systems is always singular for every continuous Morse sequence.\\
%In the forthcoming version, we will gives more details on this result and other related results.\\

We remind that Turyn-Golay's conjecture, arising from the digital communications engineering, 
state that the merit factor of any binary sequence is bounded. \\

The merit factor of a binary sequence ${\boldsymbol{\epsilon}}=(\epsilon_j)_{j=0}^{n-1} \in \{-1,1\}^n$ is given by
$$F=F_n=F_n({\boldsymbol{\epsilon}})\egdef\frac{1}{\Big\|P_n\Big\|_4^4-1},$$
where
$P_n(z)=\frac1{\sqrt{n}}\sum_{j=0}^{n-1}\epsilon_j z^j,~~z \in S^1$.
Clearly, Turyn-Golay's conjecture is equivalent to $L^4$ conjecture of Erd\"{o}s which say that for any polynomial $P$ 
from the type \eqref{Littlewood} we have $\big\|P\big\|_4 \geq (1+c),$ for some absolutely constant $c>0$. This conjecture implies the well known ultraflat conjecture of Erd\"{o}s which state that for any polynomial $P$ from the type 
\eqref{Littlewood}  we have $\big\|P\big\|_{\infty} \geq (1+c),$ $c>0$. In the same spirit, Newman mentioned (without attribution) that it is 
conjectured that there is a constant $c'>0$ such that for any polynomial $P$ of type \eqref{Littlewood} we have $\big\|P\big\|_1 \leq c'<1$. This conjecture is nowadays 
known as $L^1$ Newman's conjecture. It is obvious that Newman's conjecture implies
 the two  conjectures of Erd\"{o}s's \cite{ErdosI},\cite[Problem 22]{ErdosII}, \cite{ErdosIII}.
 %that is,  there is a positive constant $c$ such that for any polynomial $P$ from $\cl$ we have
%\begin{enumerate}
%  \item $\|P\|_4 \geq (1+c)$, ($L^4$ conjecture of Erd\"{o}s.)
%  \item  $\|P\|_{\infty} \geq (1+c).$ (Ultraflat conjecture of  Erd\"{o}s.)\\
%\end{enumerate}
\vskip 0.1cm
Of course Newman's conjecture implies also Turyn-Golay conjecture. 
However, our arguments break down as far as the $L^1$-Newman's conjecture is concerned.\\

We notice that our proof is based on the description of some arithmetic set associated to the sequence $(\epsilon_j)$ 
and on the exact computation of the $L^p$ norm of Dirichlet Kernel. We further apply some tools from Gowers's method \cite{Gowers}.\\

Our proof gives also that the square flatness implies the pairwise independence behavior, that is, the canonical projections 
of dynamical system generated by $(\epsilon_j)$ are stochastically pairwise independent. This allows us to reduced our investigation to the 
case of random trigonometric polynomials for which the random coefficients are pairwise independent and to obtain a dynamical proof
of our main result.\\

%This result 
This dynamical proof confirm partially the heuristic argument of P. Cohen about the behavior of the $L^1$ norms of the exponential sums 
(see the introduction of \cite{Cohen}.). P. Cohen in his paper addressed the famous $L^1$ Littlewood’s conjecture which was solved by 
McGehee-Pigno \& Smith \cite{Smith}.  In our setting, the heuristic argument of P. Cohen can be adapted to infer that the flatness 
may implies normality, that is, if $(P_q)$ is flat in almost everywhere sense then the sequence $(\epsilon_j)$ is normal 
in the following sense: let $k \geq 1$, and $x_n=\pm 1$. Then, for $1 \leq n \leq k$, we have 
$$\frac{\Big|\Big\{ j\in [1,N]~~:~~\epsilon_{j+n}=x_n,~~~n=1,\cdots,k\Big\}\Big|}{N} \tend{N}{+\infty}\frac1{2^k}.$$

Taking into account this interpretation, it follows that if $(\epsilon_j)$ is the Liouville function then the almost everywhere flatness may 
implies the popular Chowla conjecture on the Liouville's function. Roughly speaking, this conjecture state that the Liouville function is 
normal (see section \ref{ChowlaRH} for more details). Therefore, according to our results and analysis, we obtain that  Chowla 
conjecture implies Riemann Hypothesis. The proof of this last fact is essentially based on the computation of the $L^\alpha$ norms of 
the $L^2$-normalized polynomials generated  by Liouville or M\"{o}bius function.\\

Our result on Chowla conjecture and RH confirm in some sense the heuristic argument of Denjoy \cite{Donjoy}, that is, RH 
``holds with probability one'' (for more details see section \ref{ChowlaRH}).\\

%We will thus gives in the last section
%an application of our results in the number theory in this direction. \\

Let us remind that P. Erd\"{o}s wrote about his ultraflat conjecture in \cite{ErdosIV} the following:\\

`` Some of these  questions  may not be "serious"  Mathematics
but  I am sure  the  following  final  problem  considered  by D. J. Newman and myself  for  a long  time  is both  difficult  
and interesting:  Let  $\epsilon_k=\pm 1.$  Is it  true  that  there  is  an absolute  constant  $c$  so that  for  
every  choice  of the  $\epsilon_k's$
$$\max_{|z|=1}\Big|\sum_{k=1}^{n}\epsilon_kz^k\Big| > (1+c)\sqrt{n}?$$
This  probably  remains  true  if  the  condition  $\epsilon_k=\pm 1$ is
replaced  by  $|\epsilon_k|= 1$.  ''\\

The last conjecture (when  $\epsilon_k=\pm 1$ is
replaced  by  $|\epsilon_k|= 1$) was disproved by J-P. Kahane  \cite{Kahane}. Further, J. Beck {\cite{Beck}} has shown that 
the sequence 
\begin{eqnarray}\label{type2}
K_j(z)\egdef\frac1{\sqrt{q_j}}\sum_{k=0}^{q_j-1}a_{k,j} z^k,
\end{eqnarray}
$j=1,\cdots$ can be chosen to be flat in the sense of Littlewood (see definition below) with  coefficients chosen from the solutions of 
$z^{400} =1$. So Beck's result naturally raises the question as to what is the smallest cyclic subgroup of the circle group which supplies 
coefficients $a_{k,j}$'s for a flat sequence of trigonometric polynomials of the type \eqref{type2} (flat in the sense of Littlewood). 
According to the extensive numerical computations described in \cite{odlyzko}, the flat polynomials in the sense of Littlewood 
of the type \eqref{Littlewood}  may exist  \cite{Brad}. However, 
our arguments seems to be far form bring any contribution to this problem. 
Although, our result say that there are no ultraflat polynomials of the type \eqref{Littlewood} and this 
confirm the numerical evidence in \cite{odlyzko}.\\

 Let us further mention that several attempts have been made in the past  to solve Turyn-Golay's conjecture or Erd\"{o}s's conjectures. 
 For a brief review on those attempts we refer the reader to \cite{Jedwab92} and \cite{Saffari82}.\\

 It turns out that Turyn-Golay's conjecture is related to some spectral problems in ergodic theory. Indeed, 
 T. Downarowicz and Y. Lacroix established that Turyn-Golay's conjecture is true 
 if and only if all binary Morse flows have singular spectrum \cite{Down}. As a consequence to our main result, we obtain that 
 all binary Morse flows have singular spectrum.\\
 
 It come to us a pleasant surprise that there is also a connection between the pairwise independence sequence 
 and the famous Banach-Rohklin problem on the existence of dynamical system with pure Lebesgue spectrum and finite multiplicity. 
 This connection was made by Robertson \cite{R1988} and Womack \cite{W1984}. There is many investigations 
 in this direction on the Banach-Rohklin problem. Unfortunately, to the best of author's knowledge, 
 none of those investigations so far was successful. For more details, we refer to \cite{klopo} and the references therein.
 We also refer to \cite{Etemadi} and \cite{Bradely2009} for the more recent results on the subject.\\
 
 It turn out also that the study of Banach-Rohklin problem 
 in some class of dynamical system is equivalent to the $L^1$ flatness problem in the class of Littlewood polynomials. 
 This later connection was made by M. Guenais in 
 \cite{Guenais}. For more details we refer to section \ref{spectral}.\\ 
 
 %But, as mentioned before, we are not able to make here any progress in this problem. 
 %Although, the author in \cite{elabdal-Banach} obtained recently a positive answer to the Banach-Rohklin problem 
 %in the class of rank one maps action on $\sigma-$finite measure. Therein, 
%the proof is essentially based on the construction of $L^1$-flat Newman-Bourgain polynomials.\\

 The paper is organized as follows. In section \ref{intro}, we set up notation and terminology, and we further review some fundamental tools 
 from the interpolation theory in Fourier analysis. In section \ref{main-R1},  we introduce the notion of flat polynomials and 
 we state our first main result. In section \ref{spectral}, we remind the notion of Barker sequence and we review some ingredients from the spectral 
 theory of dynamical systems. We further state our second and third main results. In section \ref{proof1}, we present the proof of 
 our first main result. In section \ref{dynamical}, we present a dynamical proof of our main first main result and the proof our second and 
 third main results. In section \ref{ChowlaRH}, we provide an application of our dynamical proof to study the flatness issue under 
 Chowla conjecture and Riemann hypothesis. As a consequence, we obtain that Chowla conjecture implies Riemann hypothesis.
 Finally, in section \ref{Remarks}, we state some remarks and open questions.

\section{Notation, definitions and tools.}\label{intro}

Let $\cl$ denote the class of Littlewood polynomials by which we mean the trigonometric polynomials of the type
  \begin{eqnarray}\label{Littlewood2}
   P(z) = \frac{1}{\sqrt q}( \epsilon_0 + \epsilon_1 z +\cdots + \epsilon_{q-2} + \epsilon_{q-1} z^{q-1}),~~z \in S^1,
   \end{eqnarray}
where $\epsilon_0 = \epsilon_{q-1} =1,\epsilon_i =\pm 1 $, $1\leq i\leq q-2$.   Note that we make first and last coefficient of $P$ 
positive in our definition. This makes  the correspondence $T$ defined below one-one. Let $\nB$ denote  the class of  Newman-Bourgain 
polynomials, i.e., polynomials $\tilde{P}$ of type
$$\frac{1}{\sqrt m}(\eta_0 + \eta_1z + \cdots +\eta_{q-2}z^{q-2} + \eta_{q-1}z^{q-1}),$$
where $\eta_0 = \eta_{q-1} =1$, $\eta_i = 0$ or $1, 1\leq i \leq q-2$, and where $m$ is the number of non-zero terms in $\tilde{P}$ which is also the number of $i$ with $\eta_i =1$. Note that if $P$ is as in \eqref{Littlewood} and if we put
$$\eta_i = \frac{1}{2}(\epsilon_i +1), 0\leq i\leq q -1$$
then the polynomial
$$\frac{1}{\sqrt m}(\eta_0 + \eta_1z + \cdots + \eta_{q-2}z^{q-2} + \eta_{q-1}z^{q-1})$$
is in class $\nB$, where $m$ is the number of $\eta_i = 1$  which is also the number of $\epsilon_i =1$. \\

\noindent{}It is obvious that the finite sequence $(\eta_j)_{j=0}^{q-1}$ defined a subset of the set $\big\{0,\cdots,q-1\big\}$ 
which we denote by $H$. By abuse of notation we will denote also by $H$ the subset associated to the sequence $(\eta_j)_{j=0}^{+\infty}$.\\

As is customary, we denote by
$\xi_{q,j}$, $j=0, \cdots,q-1$ the $q$-root of unity given by
$$\xi_{q,j}=e^{2\pi i\frac{j}{q}}.$$
and by $\# A$ the cardinal of the set $A$. For $r \geq 2$, the discrete Fourier transform of the finite set $A$ mod $r$ and its 
balanced function are given by

$$ DF_r{(\1_A)}(\ell)=\frac{1}{r}\sum_{j=0}^{r-1}A(j) \xi_{r,j \ell},~~~~~~~~~\ell=0,\cdots,r-1,$$
where $A(j)=\#\big\{k \in A~~~:~~~k \equiv j\big\}$ or $A(j)-\#A$ for its balanced function. Notice that in the usual sense
$DF_r{(\1_A)}$ is the discrete Fourier transform  of the function $f$ given by $f(j)=A(j),~~~~j=0,\cdots,r-1.$\\
%This not the usual definition of
%discret Fourier transform. But if we put 
%$$f(j)=A(j).$$
%Then $f$ is a map from $\Z/r\Z$ to $\C$, where $\Z/r\Z$ is the finite group of the class residu mod $r$. Of course $r$ is assumed 
%to be prime.\\

The class of polynomials with coefficients of modulus one is denoted by $\mathcal{G}$. To avoid heavy notation, we denoted also by $\mathcal{G}$
the class of $L^2$-normalized polynomials from class  $\mathcal{G}$.
\vskip 0.1cm

\paragraph{\textbf{A formula between Littlewood and Newman-Bourgain Polynomials.}}
\vskip 0.1cm
Let us define one-one invertible map $ T$ from the class $L$ to the class $\nB$ by
\begin{eqnarray*}
(T(P))(z) &=& T\Big(\frac{1}{\sqrt q}\Big(\epsilon_0 + \epsilon_1z + \cdots + \epsilon_{q-2}z^{q-2} + \epsilon_{q-1}z^{q-1}\Big)\Big)\\
&=& \frac{1}{\sqrt m}\Big(\eta_0 + \eta_1z + \cdots + \eta_{q-2}z^{q-2} + \eta_{q-1}z^{q-1}\Big),
\end{eqnarray*}
where $\eta_i = \frac{1}{2}(\epsilon_i +1), 0\leq i\leq q -1$, and $m$ is the  number of $\eta_i =1$ which is
 also the number of $\epsilon_i =1$.\\

Note
$$T^{-1}\Big(\frac{1}{\sqrt m}\Big(\sum_{i=0}^{q-1}\eta_iz^i\Big)\Big) =
\frac{1}{\sqrt q}\Big(\sum_{i=0}^{q-1}(2\eta_i -1)z^i\Big). $$

Let
$$D(z) = D_q(z) = \frac{1}{\sqrt q}\sum_{i=0}^{q-1} z^i .$$
We thus have that $D(1) = \sqrt q$, while for $z \in S^1\setminus\{1\}$,
$$D(z) = \frac{1}{\sqrt q}\frac{1-z^q}{1-z} \rightarrow 0$$
as $q \rightarrow \infty$.\\

The formula for polynomials in $\cl$ mentioned in the title of this subsection is as follows: If $P$ is as in \eqref{Littlewood2}
then

\begin{eqnarray}\label{Littlewood3}
P(z) &=& 2\frac{\sqrt m}{\sqrt q}(T(P))(z) - D(z), \nonumber\\
       &=&2\frac{1}{\sqrt q} A(z) - D(z) \nonumber\\
       &=& Q(z)-D(z).
\end{eqnarray}

\noindent{}where $m$ is the number of terms in $P$ with coefficient +1, $A(z) = \sqrt m~~ T(P)(z)$, and
 $$Q(z)=Q_q(z)=\frac{2}{\sqrt{q}}A(z).$$
 The proof follows as soon as we write $T(P)(z)$ and $D(z)$ in the right hand side in full form and collect 
 the coefficient of $z^i, ~~0\leq i \leq q-1$.\\

\noindent{} Note that polynomials in $\cl$, $\nB$ and the polynomial $D$ all have $L^2(S^1, dz)$ norm 1. Moreover, for each $j \in \{1,\cdots,q-1\}$,
we have
\begin{eqnarray}\label{rootof1}
P(\xi_{q,j})=Q(\xi_{q,j}),~~~~~~~~~~~~j=1,\cdots,q-1,
\end{eqnarray}\\
by the identity \eqref{Littlewood3}.
\vskip0.2cm
We will further need the following fundamental inequalities from the interpolation theory due to Marcinkiewz \& Zygmund \cite[Theorem 7.10, Chapter X, p.30]{Zygmund}.
\begin{Th}\label{MZ}
For
$\alpha > 1$, $n \geq 1$, and any analytic trigonometric polynomial $P$ of degree $\leq n$,
\begin{eqnarray}\label{MZ1}
\frac{A_{\alpha}}{n}\sum_{j=0}^{n-1}\big|P(\xi_{n,j})\big|^{\alpha}
\leq \int_{S^1}\Big|P(z)\Big|^{\alpha} dz \leq \frac{B_{\alpha}}{n}\sum_{j=0}^{n-1}\big|P(\xi_{n,j})\big|^{\alpha},
\end{eqnarray}
where  $A_{\alpha}$ and $B_{\alpha}$ are independent of $n$ and $P$.
\end{Th}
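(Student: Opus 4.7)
My plan is to deduce both inequalities from Bernstein's inequality $\|P'\|_\alpha \leq n\|P\|_\alpha$ combined with a careful local analysis on the arcs between consecutive sampling nodes. Partition $S^1$ into arcs $I_j$ of length $2\pi/n$ centered at $\xi_{n,j}$.

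For the upper bound, fix $z \in I_j$ and write $P(z) = P(\xi_{n,j}) + \int_{\xi_{n,j}}^{z} P'(w)\,dw$. The convexity estimate $|a+b|^\alpha \leq 2^{\alpha-1}(|a|^\alpha+|b|^\alpha)$ together with Hölder applied to the inner integral (giving $|\int P'|^\alpha \leq |I_j|^{\alpha-1}\int_{I_j}|P'|^\alpha$) yield, after integration in $z$ over $I_j$ and summation in $j$,
$$\int_{S^1}|P|^\alpha \leq \frac{C}{n}\sum_{j=0}^{n-1}|P(\xi_{n,j})|^\alpha + C\,n^{-\alpha}\int_{S^1}|P'|^\alpha,$$
so Bernstein bounds the remainder by $C\,\|P\|_\alpha^\alpha$. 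This is not yet enough because the constant is too large to absorb directly. The remedy is to introduce a refinement parameter $K$: rerun the argument on arcs of length $2\pi/(Kn)$ so that the Bernstein remainder carries an additional factor $K^{-\alpha}$ and becomes absorbable for $K$ large (depending only on $\alpha$). To return from the fine grid $\xi_{Kn,i}$ to the coarse grid $\xi_{n,j}$, relate $P(\xi_{Kn,i})$ to the nearest $P(\xi_{n,j})$ through another integration of $P'$, controlling the error by the Hardy-Littlewood maximal inequality on $L^\alpha$, whose boundedness holds precisely for $\alpha>1$.

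The lower bound proceeds symmetrically: from $|P(\xi_{n,j})|^\alpha \leq 2^{\alpha-1}(|P(z)|^\alpha + |\int_z^{\xi_{n,j}}P'|^\alpha)$ for $z \in I_j$, average in $z$ over $I_j$, sum in $j$, and absorb the Bernstein contribution by the same refinement procedure. The final constants $A_\alpha,B_\alpha$ depend only on $\alpha$ and blow up as $\alpha \to 1^+$, as they must.

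The main obstacle is precisely this absorption step: a straightforward Bernstein-arc estimate yields terms of the form $C\int|P|^\alpha$ with $C$ of order $(2\pi)^\alpha 2^{\alpha-1}$, which cannot be absorbed by mesh refinement alone because Bernstein's constant in $\|P'\|_\alpha \leq n\|P\|_\alpha$ is optimal. Closing the inequality with constants depending only on $\alpha$ genuinely requires the Hardy-Littlewood maximal bound (equivalently the M.\ Riesz conjugate-function theorem). That the endpoint $\alpha=1$ must fail is witnessed by the Dirichlet polynomial $D(z)=n^{-1/2}\sum_{k=0}^{n-1}z^k$, which vanishes at every non-trivial $n$-th root of unity but satisfies $\|D\|_1\sim n^{-1/2}\log n$; hence the upper inequality $\int|D| \leq (B_1/n)\sum_j|D(\xi_{n,j})|=B_1/\sqrt{n}$ cannot hold uniformly in $n$ when $\alpha=1$.
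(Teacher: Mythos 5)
The paper does not prove this theorem; it quotes it from Zygmund (Theorem 7.10, Chapter X), where the proof runs through the Lagrange interpolation formula at the roots of unity and the M.~Riesz theorem on the uniform $L^{\alpha}$-boundedness of the partial-sum operators. Your first inequality is correct and in fact easier than you suggest: averaging $|P(\xi_{n,j})|^{\alpha}\le 2^{\alpha-1}\bigl(|P(z)|^{\alpha}+\bigl|\int_{z}^{\xi_{n,j}}P'\bigr|^{\alpha}\bigr)$ over $z\in I_j$, summing in $j$, and using H\"older plus Bernstein bounds the discrete mean by $2^{\alpha-1}\bigl(1+(2\pi)^{\alpha}\bigr)\|P\|_{\alpha}^{\alpha}$ outright; there is no term to absorb and no refinement is needed in that direction.

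The genuine gap is in the second inequality. The refinement step is sound as far as it goes, but what it proves is only the oversampled estimate $\|P\|_{\alpha}^{\alpha}\le C_{\alpha}(Kn)^{-1}\sum_{i}|P(\xi_{Kn,i})|^{\alpha}$ with $K=K(\alpha)$ large. The return from the grid of $Kn$ points to the grid of $n$ points is not a technical afterthought --- it is the entire content of the theorem. Writing $P(\xi_{Kn,i})=P(\xi_{n,j(i)})+\int P'$ and estimating the error by H\"older, Bernstein, or the Hardy--Littlewood maximal function all yield an additive term of size $c(2\pi)^{\alpha}\|P\|_{\alpha}^{\alpha}$ with $c(2\pi)^{\alpha}>1$, and this time there is no free parameter left to shrink it, since the arcs joining a fine node to its nearest coarse node have the fixed length $2\pi/n$; the maximal function can only lose constants here, never gain them, so the inequality does not close. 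The classical proof avoids absorption altogether: it represents $P$ by the interpolation formula $P(z)=\frac1n\sum_{j}\xi_{n,j}\frac{z^{n}-1}{z-\xi_{n,j}}P(\xi_{n,j})$, identifies the right-hand side, up to controllable errors, with the $n$-th partial sum of the step function $f=\sum_{j}P(\xi_{n,j})\mathbb{1}_{I_j}$, and applies M.~Riesz to that \emph{main} term rather than to a remainder. Your counterexample at $\alpha=1$ is correct. One further caution: as stated the theorem is false for degree exactly $n$ (take $P(z)=1-z^{n}$, which vanishes at every $\xi_{n,j}$ while $\|P\|_{\alpha}>0$); the hypothesis must be degree $\le n-1$, and any proof at the critical sampling rate has to use this.
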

For the trigonometric polynomials, Marcinkiewz-Zygmund interpolation inequalities can be stated as follows \cite[Theorem 7.5, Chapter X, p.28]{Zygmund}.
\begin{Th}\label{MZ2}
For $\alpha > 1$, $n \geq 1$, and any trigonometric polynomial $P$ of degree $\leq n$,
\begin{eqnarray}\label{MZ3}
\quad\quad\quad\frac{A_{\alpha}}{2n+1}\sum_{j=0}^{2n}\big|P(\xi_{2n+1,j})\big|^{\alpha}
\leq \int_{S^1}\Big|P(z)\Big|^{\alpha} dz \leq \frac{B_{\alpha}}{2n+1}\sum_{j=0}^{2n}\big|P(\xi_{2n+1,j})\big|^{\alpha},
\end{eqnarray}
where  $A_{\alpha}$ and $B_{\alpha}$ are independent of $n$ and $P$.
\end{Th}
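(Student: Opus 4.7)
The plan is to reduce Theorem \ref{MZ2} directly to Theorem \ref{MZ} by the standard device of multiplying by $z^n$, which converts a trigonometric (Laurent) polynomial of degree $\leq n$ into an analytic polynomial of degree $\leq 2n$. Specifically, given a trigonometric polynomial $P$ of degree $\leq n$, written on $S^1$ as $P(z)=\sum_{k=-n}^{n} c_k z^k$, I would introduce the auxiliary polynomial
\begin{eqnarray*}
\tilde{P}(z) \;=\; z^n P(z) \;=\; \sum_{j=0}^{2n} c_{j-n}\, z^j,
\end{eqnarray*}
which is analytic of degree $\leq 2n$, hence \emph{a fortiori} of degree $\leq 2n+1$.

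The crucial observation is that $|z^n|=1$ for $z\in S^1$, so $|\tilde{P}(z)|=|P(z)|$ pointwise on the unit circle. Consequently the $L^\alpha(S^1,dz)$ norms of $P$ and $\tilde{P}$ coincide, and the modulus of $\tilde{P}$ at any root of unity agrees with that of $P$. In particular, for every $j=0,\ldots,2n$, we have $|\tilde{P}(\xi_{2n+1,j})|=|P(\xi_{2n+1,j})|$.

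Next I would apply Theorem \ref{MZ} to $\tilde{P}$, taking the parameter ``$n$'' appearing there to be $2n+1$. Since $\deg \tilde{P}\leq 2n\leq 2n+1$, the hypothesis is satisfied, and the inequality \eqref{MZ1} yields
\begin{eqnarray*}
\frac{A_\alpha}{2n+1}\sum_{j=0}^{2n}\big|\tilde{P}(\xi_{2n+1,j})\big|^\alpha
\leq \int_{S^1} \big|\tilde{P}(z)\big|^\alpha\, dz
\leq \frac{B_\alpha}{2n+1}\sum_{j=0}^{2n}\big|\tilde{P}(\xi_{2n+1,j})\big|^\alpha.
\end{eqnarray*}
Substituting $|\tilde{P}|=|P|$ on $S^1$ (and at every $(2n+1)$-th root of unity) on all three sides of the display gives exactly \eqref{MZ3}. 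The constants $A_\alpha,B_\alpha$ remain independent of $n$ because they were already independent of the degree parameter in Theorem \ref{MZ}.

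I do not anticipate a genuine obstacle: the whole argument is a one-line change of variable. The only point requiring a moment's care is the bookkeeping between the degree bound $\leq 2n$ of $\tilde{P}$ and the number $2n+1$ of sampling nodes in Theorem \ref{MZ2}, which is precisely what makes the trick work (one needs at least $2n+1$ nodes to discretize an analytic polynomial of degree $2n$). An alternative route via Lagrange interpolation against the Dirichlet kernel together with a Bernstein-type derivative estimate would also succeed, but it is considerably longer and redundant given that Theorem \ref{MZ} is already in hand.
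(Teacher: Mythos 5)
Your reduction is correct. The paper itself offers no proof of Theorem \ref{MZ2}: both Theorem \ref{MZ} and Theorem \ref{MZ2} are quoted from Zygmund (Theorems 7.10 and 7.5 of Chapter X, respectively), so there is no argument in the text to compare yours against. Your derivation of the trigonometric case from the analytic case via $\tilde{P}(z)=z^nP(z)$ is the standard one and is sound: $|\tilde P|=|P|$ on $S^1$, $\deg\tilde P\leq 2n$, and applying \eqref{MZ1} with parameter $2n+1$ gives \eqref{MZ3} verbatim. It is worth noting two things. First, in Zygmund the logical dependency actually runs the other way: the trigonometric inequality with $2n+1$ nodes is the primary result (proved via the conjugate-function/Riesz-projection machinery), and the power-polynomial version is deduced from it; but since the paper states Theorem \ref{MZ} as an independent citation, your direction of reduction is perfectly legitimate. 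Second, your parenthetical remark about the node count is more than bookkeeping: the statement of Theorem \ref{MZ} with $n$ nodes for a polynomial of degree \emph{equal to} $n$ fails on the upper bound (take $P(z)=1-z^n$, which vanishes at all $n$-th roots of unity yet has positive $L^\alpha$ norm), so the sampling count must strictly exceed the degree. Your application sits safely in that regime, since $\deg\tilde P\leq 2n<2n+1$.
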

We will also need the following special case of Marcinkiewicz-Zygmund inequalities for the $L^2$ and $L^4$ norms. 
We include the proof for sake of completeness.
\begin{lem}\label{classical} Let $P$ be an analytic trigonometric polynomial with degree  $ \leq q-1$ and complex coefficients 
$a_j,~~~j=0,\cdots q-1$. Then
\begin{eqnarray}\label{obs1}
\frac1{q}\sum_{j=0}^{q-1}\big|P(\xi_{q,j})\big|^2=\int \big|P(z)\big|^2 dz.
\end{eqnarray}
Assume further that the degree of $P$ is odd and the coefficients are real, then
\begin{eqnarray}\label{obs2}
\frac{1}{2q}\sum_{j=0}^{q-1}\big|P(\xi_{q,j})\big|^4 +\frac{1}{2q}\sum_{j=0}^{q-1}\big|P(-\xi_{q,j})\big|^4=
\int \big|P(z)\big|^4 dz.
\end{eqnarray}
\end{lem}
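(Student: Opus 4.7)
The plan for (1) is the standard Parseval-type orthogonality argument. On $S^1$ one has $\overline{z}=1/z$, so $|P(z)|^2 = \sum_{k,l=0}^{q-1} a_k \overline{a_l}\, z^{k-l}$; integrating term by term gives $\int|P|^2\,dz = \sum_{k=0}^{q-1}|a_k|^2$. On the discrete side, evaluating at $z=\xi_{q,j}$ and summing, the character relation $\frac{1}{q}\sum_{j=0}^{q-1}\xi_{q,j}^{k-l} = \1_{q\mid(k-l)}$ applies; since $|k-l|\leq q-1<q$, this collapses to $\1_{k=l}$, so the right-hand side of \eqref{obs1} also reduces to $\sum_k|a_k|^2$.

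For (2), I would run the analogous computation for $|P|^4$, sampling at twice as many points. Because the $a_j$ are real,
\[
|P(z)|^4 \;=\; \sum_{k,l,m,n=0}^{q-1} a_k a_l a_m a_n\, z^{(k+m)-(l+n)},
\]
which as a Laurent polynomial has exponents $s:=(k+m)-(l+n)$ lying in $\{-2(q-1),\dots,2(q-1)\}$, and $\int|P|^4\,dz$ is precisely the sum restricted to $k+m=l+n$. On the discrete side, $\frac{1}{q}\sum_j\xi_{q,j}^{s} = \1_{q\mid s}$ and $\frac{1}{q}\sum_j(-\xi_{q,j})^{s} = (-1)^s\,\1_{q\mid s}$; averaging them produces the combined filter $\tfrac{1+(-1)^s}{2}\,\1_{q\mid s} = \1_{q\mid s}\,\1_{s\text{ even}}$.

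The finishing step is a parity observation: under the odd-degree hypothesis (which I read as ensuring $q$ is odd, so that $\{\xi_{q,j}\}\cup\{-\xi_{q,j}\}$ lists all $2q$-th roots of unity), the joint condition "$q\mid s$ and $s$ even" is equivalent to "$2q\mid s$". Since $|s|\leq 2(q-1)<2q$, only $s=0$ survives, i.e.\ $k+m=l+n$, which reproduces $\int|P|^4\,dz$ exactly. The only nontrivial point is this parity upgrade from $q$-th to $2q$-th roots of unity: it is what kills the otherwise spurious contributions at $s=\pm q$, and without it the symmetrized left-hand side would differ from the integral. Everything else is routine character orthogonality on $\mathbb{Z}/q\mathbb{Z}$, so no further obstacle is expected.
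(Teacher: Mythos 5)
Your proof is correct and follows essentially the same route as the paper: expand $|P|^4$, apply character orthogonality on $\mathbb{Z}/q\mathbb{Z}$, and note that the only aliased frequencies, at $s=\pm q$, carry the sign $(-1)^q$ at the antipodal nodes, so averaging over $\{\xi_{q,j}\}\cup\{-\xi_{q,j}\}$ (all $2q$-th roots of unity when $q$ is odd) cancels them. The only differences are cosmetic --- you keep the quadruple sum while the paper first passes to the autocorrelation coefficients $c_l$ (formulas \eqref{four1}--\eqref{four2}, which it reuses later) --- and your reading of the ``odd degree'' hypothesis as ``$q$ odd'' is exactly what the paper's own proof invokes.
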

\begin{proof} An easy computation gives, for any $z \in S^1$,
\begin{eqnarray}\label{correlation1}
\big|P(z)\big|^2&=&\sum_{k,l=0}^{q-1}a_k \overline{a_l}z^{k-l} \nonumber \\
&=&\sum_{k=0}^{q-1}|a_k|^2+\sum_{\overset{k,l=0}{k \neq l}}^{q-1}a_k \overline{a_l}z^{k-l},
\end{eqnarray}
We thus get
\begin{eqnarray*}
\frac1{q}\sum_{j=0}^{q-1}\big|P(\xi_{q,j})\big|^2 &=&
\sum_{k=0}^{q-1}|a_k|^2+\sum_{\overset{k,l=0}{k \neq l}}^{q-1}a_k \overline{a_l}\Big(\frac1{q}\sum_{j=0}^{q-1} \xi_{q,j}^{k-l}\Big)\\
&=&\sum_{k=0}^{q-1}|a_k|^2,
\end{eqnarray*}
since
$$\sum_{j=0}^{q-1} \xi_{q,j}^{l}=0,\textrm{~~for~~all~~} l \not \equiv 0 ~~~~{\textrm {mod~~}}q.$$
The identity \eqref{obs1} follows once we observe that
$$\int_{S^1} \big|P(z)\big|^2 dz=\sum_{k=0}^{q-1}|a_k|^2.$$
For the proof of \eqref{obs2}, we rewrite \eqref{correlation1} as follows
\begin{eqnarray}\label{correlation2}
 \big|P(z)\big|^2= c_0+\sum_{\overset{l \neq 0}{|l|\leq q-1}}c_lz^l,
\end{eqnarray}
where $(c_l)$ are the autocorrelations of the sequence $(a_k)_{k=0}^{q-1}$. Since the coefficients are real, \eqref{correlation2} take the 
following form.
\begin{eqnarray*}
 \big|P(z)\big|^2= c_0+\sum_{1 \leq l \leq q-1}c_l\big (z^l+z^{-l}\big),
\end{eqnarray*}
Hence
\begin{eqnarray}\label{key-4}
 \big|P(z)\big|^4= c_0^2&+&2c_0\sum_{1 \leq l \leq q-1}c_l\big(z^l+z^{-l}\big) \nonumber \\&+&
 \sum_{k,l}c_kc_l(z^{l+k}+z^{-(l+k)}\big)
 +\sum_{l \neq k}c_lc_k (z^{l-k}+z^{k-l}\big)+2\sum_{k=1}^{q-1}c_k^2.
\end{eqnarray}
Proceeding in the same manner as before we obtain
\begin{eqnarray}\label{four1}
 \frac1{2q}\sum_{j=0}^{q-1}\big|P(\xi_{q,j})\big|^4=\frac12\Big(c_0^2+2\sum_{k=1}^{q-1}c_kc_{q-k}+2\sum_{k=1}^{q-1}c_k^2\Big),
\end{eqnarray}
and
\begin{eqnarray}\label{four2}
 \frac1{2q}\sum_{j=0}^{q-1}\big|P(-\xi_{q,j})\big|^4=\frac12\Big(c_0^2+2(-1)^q\sum_{k=1}^{q-1}c_kc_{q-k}+2\sum_{k=1}^{q-1}c_k^2\Big).
\end{eqnarray}
Since $q$ is odd, by  adding \eqref{four1} to \eqref{four2} we get \eqref{obs2}. This finishes the proof of the lemma.
\end{proof}
% Q(z)=z^{q-1} \sum_{k=-(q-1)}^{q-1}c_kz^k=\sum_{l=0}^{2q-2}c_{l-q+1}z^l.$$
\begin{rem}(1) The key argument in the proof of Lemma \ref{classical} can be reformulated as follows: let
$$\mu_q=\frac1{q}\sum_{j=0}^{q-1}\delta_{\xi_{q,j}},$$
and
$$e_l(z)=z^l, \textrm{~~for~~all~~} l \in \Z.$$
Then, for all $k,l \in \Z$, we have
\begin{eqnarray*}
<e_l,e_k>&=& \int_{S^1}  e_l(z) \overline{e_k(z)} d\mu_q\\
&=& \left\{
      \begin{array}{ll}
        0, & \hbox{if $l \not \equiv k$ mod $q$;} \\
        1, & \hbox{if not,}
      \end{array}
    \right.
\end{eqnarray*}
that is, the family $\{e_l\}_{l=0}^{q-1}$ is an orthonormal basis for $L^2(\mu_q)$. We further notice that
$\{e_l\}_{l \geq 0}$ is an orthonormal basis for $L^2(dz)$. Moreover, it is well known that 
the previous arguments are at the heart of the Fast Fourier transform
(FFT) techniques, we refer to \cite[Chap.7]{Stein-Sh} for more details. For the application of FFT in a variety of areas including
biomedical engineering and the radar communications field, please refer to \cite[Chap. 8]{Kim} and the references given there.
\vskip 0.1cm
(2) Let us further point out that the identities \eqref{four1} and \eqref{four2} can be obtained as an easy application of Parseval identity. Indeed, write
\begin{eqnarray*}
\big|P(\xi_{q,l})\big|^2&=&c_0+\sum_{k=1}^{q-1}c_k e^{2\pi i k\frac{l}{q}}+\sum_{k=1}^{q-1}c_{k} e^{-2\pi i k\frac{l}{q}}\\
&=& c_0+\sum_{k=1}^{q-1} c_k e^{2\pi i k\frac{l}{q}}+\sum_{k=1}^{q-1}c_{q-k} e^{2\pi i k\frac{l}{q}}\\
&=&c_0+\sum_{k=1}^{q-1} (c_k+c_{q-k}) e^{2\pi i k\frac{l}{q}},
\end{eqnarray*}
and apply the Parseval identity to obtain
$$\frac{1}{2q}\sum_{j=0}^{q-1}\big|P(\xi_{q,j})\big|^4=\frac12\Big(c_0^2+\sum_{k=1}^{q-1} (c_k+c_{q-k})^2\Big).$$
An elementary calculation show  that this last identity is exactly the identity \eqref{four1}. For the second
identity, the detailed verification is left to the reader.
\end{rem}
\section{Our first main result and flat polynomials}\label{main-R1}
For any $\alpha>0$ or $\alpha=+\infty$, the sequence $P_n(z), n=1,2,\cdots$ of analytic trigonometric polynomials of $L^2(S^1,dz)$
norm 1 is said to be $L^\alpha$-flat if  $| P_n(z)|, n=1,2,\cdots$ converges in $L^\alpha$-norm to the constant function $1$. For $\alpha=0$, we say that $(P_n)$ is $L^\alpha$-flat, if the Mahler measures $M(P_n)$ converge to 1. 
We recall that the Mahler measure of a function $f \in L^1(S^1,dz)$ is defined by
$$ M(f)=\|f\|_0=\lim_{\beta \longrightarrow 0}\|f\|_{\beta}=\exp\Big(\int_{S^1} \log(|f(t)|) dt\Big).$$

The sequence $P_n(z), n=1,2,\cdots$  is said to be square $L^\alpha$-flat if  $| P_n(z)|^2, n=1,2,\cdots$ converges in $L^\alpha$-norm to the constant function $1$.\\

Obviously, if the sequence $P_n(z), n=1,2,\cdots$ is square $L^\alpha$-flat then it is $L^\alpha$-flat, since
\begin{eqnarray*}
\Big|\big|P_n(z)\big|^2-1\Big|^\alpha&=&\Big|\big|P_n(z)\big|-1\Big|^\alpha \Big(\big|P_n(z)\big|+1\Big)^\alpha\\
&\geq& \Big|\big|P_n(z)\big|-1\Big|^\alpha,~~~~~\rm{if~~} \alpha>0,
\end{eqnarray*}
and
$$M(|P_n|^2)= M(|P_n|)^2.$$

 We further have that the square $L^1$-flatness is equivalent to the $L^1$-flatness, by Proposition 4.2 from \cite{elabdal-Banach}.\\

We say that the sequence $P_n(z), n=1,2,\cdots$  is flat in almost everywhere sense (a.e. $(dz)$) if $| P_n(z)|$, $n=1,2,\cdots$ converges almost everywhere to $1$ with respect to $dz$.\\

%Note that if $P_n(z), n=1,2,\cdots$  is  a.e. $(dz)$ flat then $S(P_n), n=1,2,\cdots$  is also  a.e. $(dz)$  flat.\\

Following \cite{elabdal-little}, the sequence $P_n, n=1,2,\cdots$ of polynomials from the class $\mathcal{L}$ (or $\mathcal{G}$) is flat in the sense of Littlewood  if there exist constants $0<A<B$  such that for all $z\in S^1$, for all $n \in \N$ (or at least for a large $n$)

$$A \leq \big| P_n(z)\big| \leq B.$$

It is immediate that the flatness properties are invariant under $S$. It is further a nice exercise that the $L^4$ conjecture of Erd\"{o}s 
and the ultraflat conjecture of Erd\"{o}s holds in the class of Newman-Bourgain polynomials \cite{abd-nad2}, \cite{Abd-Nad}.\\

We will need the following fundamental  criterion of the square $L^2$-flatness.

\begin{Prop}\label{Cri1}Let $(P_q(z))_{q \geq 0}$ be a sequence of $L^2$-normalized polynomials. Then,
$(P_q(z))_{q \geq 0}$  is square $L^2$-flat if and only if the $L^4$-norm of $P_q$ converge to $1$ as $q\longrightarrow +\infty$.
\end{Prop}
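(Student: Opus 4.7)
The plan is to reduce the statement to a single direct $L^2$ computation exploiting the $L^2$-normalization hypothesis $\|P_q\|_2 = 1$. Square $L^2$-flatness means by definition that $|P_q|^2 \longrightarrow 1$ in $L^2(S^1,dz)$, i.e.\ $\big\| |P_q|^2 - 1\big\|_2 \to 0$. So the whole proposition reduces to rewriting this quantity in terms of $\|P_q\|_4$.

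Concretely, I would simply expand the square, using that $|P_q|^2-1$ is real-valued:
\begin{equation*}
\int_{S^1} \big| |P_q(z)|^2 - 1\big|^2 \, dz
= \int_{S^1} |P_q(z)|^4 \, dz \;-\; 2 \int_{S^1} |P_q(z)|^2\, dz \;+\; 1
= \|P_q\|_4^4 \;-\; 2\|P_q\|_2^2 \;+\; 1.
\end{equation*}
Invoking the normalization $\|P_q\|_2 = 1$, the middle term collapses and we obtain the key identity
\begin{equation*}
\big\| |P_q|^2 - 1 \big\|_2^{\,2} \;=\; \|P_q\|_4^{\,4} \,-\, 1.
\end{equation*}

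From this identity both implications are immediate. If the sequence is square $L^2$-flat, then the left-hand side tends to $0$, so $\|P_q\|_4^4 \to 1$ and hence $\|P_q\|_4 \to 1$. Conversely, if $\|P_q\|_4 \to 1$, then by continuity $\|P_q\|_4^4 \to 1$, so the left-hand side tends to $0$, which is exactly the square $L^2$-flatness. There is no real obstacle here: the content of the proposition is the identity above, and the hypothesis $\|P_q\|_2 = 1$ is precisely what makes the cross term vanish. I would present the calculation in one display followed by the two one-line implications, and that is the entire proof.
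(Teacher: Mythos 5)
Your proof is correct and is essentially identical to the paper's: both expand $\big\||P_q|^2-1\big\|_2^2$, use $\|P_q\|_2=1$ to collapse the cross term, and read off both implications from the identity $\big\||P_q|^2-1\big\|_2^2=\|P_q\|_4^4-1$. Nothing further is needed.
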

\begin{proof} A straightforward computation gives
\begin{eqnarray}\label{identity2}
\int_{S^1} \Big|\big|P_q(z)\big|^2-1\Big|^2 dz&=& \int_{S^1} |P_q(z)|^4 dz-2 \int_{S^1} \big|P_q(z)\big|^2 dz +1 \nonumber\\
&=&\int_{S^1} |P_q(z)|^4 dz-1.
\end{eqnarray}
Therefore,
$$\int_{S^1} \Big|\big|P_q(z)\big|^2-1\Big|^2 dz \tend{q}{+\infty}0,$$
if and only if
$$\int_{S^1} |P_q(z)|^4 dz \tend{q}{+\infty}1.$$
The proof of the proposition is complete.
\end{proof}
We can strengthen Proposition \ref{Cri1} as follows
\begin{Prop}\label{Cri2}Let $(P_q(z))_{q \geq 0}$ be a sequence of $L^2$-normalized polynomials. Then, for any integer $p \geq 1$,
the $L^{2p}$-norm of $P_q$ converge to $1$ as $q\longrightarrow +\infty$ if and only if
$$ \int \Big|\big|P_q(z)\big|^2-1\Big|^p dz \tend{q}{+\infty}0.$$
\end{Prop}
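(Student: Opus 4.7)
The plan is to extend the identity underlying Proposition \ref{Cri1} from the case $p=2$ to arbitrary integer $p$. Set $F_q(z) = |P_q(z)|^2 \geq 0$, so that $\int F_q\,dz = \|P_q\|_2^2 = 1$ and $\|F_q\|_p = \|P_q\|_{2p}^2$.

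The easy implication (``$\Leftarrow$'') follows from the reverse triangle inequality: if $\|F_q - 1\|_p \to 0$, then
\[
\bigl|\,\|F_q\|_p - 1\,\bigr| \;=\; \bigl|\,\|F_q\|_p - \|1\|_p\,\bigr| \;\leq\; \|F_q - 1\|_p \;\longrightarrow\; 0,
\]
and since $\|F_q\|_p = \|P_q\|_{2p}^2$, this yields $\|P_q\|_{2p} \to 1$.

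For the converse (``$\Rightarrow$'') the plan is to proceed in two steps. The first step is to upgrade the single-norm assumption to convergence of all intermediate moments. By the Lyapunov / log-convexity inequality, for every $r \in [2, 2p]$ and the appropriate $\theta \in [0,1]$,
\[
1 \;=\; \|P_q\|_2 \;\leq\; \|P_q\|_r \;\leq\; \|P_q\|_2^{1-\theta}\,\|P_q\|_{2p}^{\theta} \;=\; \|P_q\|_{2p}^{\theta},
\]
where the left-hand bound is Jensen. As $\|P_q\|_{2p} \to 1$, this sandwich forces $\|P_q\|_r \to 1$ for every $r \in [2, 2p]$; in particular $\|P_q\|_{2k}^{2k} \to 1$ for $k = 0, 1, \dots, p$.

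The second step is to bound $\int |F_q - 1|^p\,dz$ by a polynomial in these moments. For even $p$ this is immediate: $|F_q - 1|^p = (F_q - 1)^p$, and the binomial expansion gives
\[
\int (F_q - 1)^p\,dz \;=\; \sum_{k=0}^{p} \binom{p}{k}(-1)^{p-k}\,\|P_q\|_{2k}^{2k} \;\longrightarrow\; (1 - 1)^p \;=\; 0.
\]
For odd $p$ the identity $|F_q - 1|^p = (F_q - 1)^p$ fails, which is the main obstacle. To bypass it I would use the pointwise majorization
\[
|F_q - 1|^p \;=\; |F_q - 1|\,(F_q - 1)^{p-1} \;\leq\; (F_q + 1)\,(F_q - 1)^{p-1},
\]
valid because $|F_q - 1| \leq F_q + 1$ (since $F_q \geq 0$) and $(F_q - 1)^{p-1} \geq 0$ (since $p - 1$ is even). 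The right-hand side is still a polynomial in $F_q$ of degree $p$, so integrating term by term produces a linear combination of the $\|P_q\|_{2k}^{2k}$'s that reduces, in the limit, to a telescoping binomial sum equal to $0$. This closes the argument.
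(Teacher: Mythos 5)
Your treatment of the easy direction and of the even-$p$ case coincides with the paper's: the paper likewise deduces $\|P_q\|_{2k}\to 1$ for the intermediate exponents from H\"{o}lder's inequality and then expands $\int(1-|P_q|^2)^p\,dz$ binomially into the moments $\|P_q\|_{2k}^{2k}$. Where you genuinely depart from the paper --- and improve on it --- is the odd case. The paper's argument rests on the identity $\bigl\||P_q|^2-1\bigr\|_p^p=\sum_{k=0}^{p}\binom{p}{k}(-1)^k\int|P_q(z)|^{2k}\,dz$, whose right-hand side is $\int(1-|P_q|^2)^p\,dz$; this equals $\int\bigl||P_q|^2-1\bigr|^p\,dz$ only when $p$ is even, so the paper's proof is incomplete for odd $p$. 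Your pointwise majorization $|F_q-1|^p\le(F_q+1)(F_q-1)^{p-1}$ is exactly the right repair: it keeps the degree at $p$, so only the moments $\|P_q\|_{2k}^{2k}$ with $k\le p$ (which you control) appear, and the integral of the majorant tends to $(1+1)(1-1)^{p-1}=0$. This correctly settles every odd $p\ge 3$.

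There is, however, one value of $p$ at which your argument --- and the paper's, and indeed the statement itself --- breaks down, namely $p=1$. There $(F_q-1)^{p-1}\equiv 1$, so your majorant integrates to $\int(F_q+1)\,dz=2$, not $0$, and the ``telescoping to zero'' step fails. This is not a repairable defect of the proof: for $p=1$ the hypothesis $\|P_q\|_2\to 1$ holds automatically for $L^2$-normalized polynomials, while the conclusion $\int\bigl||P_q|^2-1\bigr|\,dz\to 0$ is false in general. For instance, with $P_q=D_q$ the normalized Dirichlet kernel, $|D_q|^2$ is the Fej\'er kernel, so $|D_q|^2\to 0$ a.e.\ while $\int|D_q|^2\,dz=1$, and a short computation gives $\int\bigl||D_q|^2-1\bigr|\,dz\to 2$. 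The proposition should therefore be read with $p\ge 2$; under that reading your proof is complete and correct, and for odd $p$ it is more careful than the paper's.
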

\begin{proof}By the triangle inequality, we have 
$$\Big\|\big|P_q\big|^2-1\Big\|_p \geq \Big| \big\|P_q\|_p-1\Big|.$$
Whence $\Big| \big\|P_q\|_p-1\Big|$ converge to $0$ as $q \longrightarrow +\infty$ if $\Big\|\big|P_q\big|^2-1\Big\|_p$ converge to zero 
as $q \longrightarrow +\infty$. For the opposite direction, since $p$ 
%we suppose first that $p$ 
is an integer, we can thus write
$$\Big\|\big|P_q\big|^2-1\Big\|_p^p=\sum_{k=0}^{p}\binom{p}{k} (-1)^k\int \big|P_q(z)\big|^{2k} dz,$$
We further have 
$$1=\Big\|P_q\|_2 \leq \Big\|P_q\Big\|_{2k} \leq \Big\|P_q\Big\|_{2p},$$
for any $k=1,\cdots,p$, by H\"{o}lder inequality. Assume that $\Big\|P_q\Big\|_{2p}$ converge to $1$ as $q \longrightarrow +\infty$ it follows that 
$$ \Big\|P_q\Big\|_{2k} \tend{q}{+\infty}1,\quad \quad \textrm{for}\quad 1 \leq k \leq p.$$
We thus get 
$$\sum_{k=0}^{p}\binom{p}{k} (-1)^k\int \big|P_q(z)\big|^{2k} dz \tend{q}{+\infty}(1-1)^p=0.$$
%Applying Cauchy-Schwarz inequality and a classical interpolation argument, it follows that the proposition holds for any $p \geq 1$. Indeed, 
%by Cauchy-Schwarz inequality, we have 
%\begin{eqnarray}
% \int \Big|\big|P_q\big|^2-1\Big|^p dz &=& \int \Big|\big|P_q\big|-1\Big|^p \Big|\big|P_q\big|+1\Big|^p dz\\
% &\leq&\Big\|\big|P_q\big|-1\Big\|_{2p}^p \Big\|\big|P_q\big|+1\Big\|_{2p}^p 
% \end{eqnarray}
%Since $\Big\|\big|P_q\big|+1\Big\|_{2p} \leq \Big\|P_q\Big\|_{2p}+1 < \infty$ under our assumption, it suffices to establish 
%that $\Big\|\big|P_q\big|-1\Big\|_{2p}^p$ converge to 0 as $q\longrightarrow +\infty$. To see this, write 
%$p=t_1[p]+t_2([p]+1)$, where $t_1,t_2$ are positive numbers with $t_1+t_2=1.$ Therefore, since the function $\alpha \mapsto \big\|P_q\big\|_\alpha^{\alpha}$
%is logarithmically convex, we obtain
%$$\Big\|\big|P_q\big|-1\Big\|_{2p}^{2p} \leq \Big\|\big|P_q\big|-1\Big\|_{2[p]}^{2[p]t_1} 
%\leq \Big\|\big|P_q\big|-1\Big\|_{2[p]+2}^{(2[p]+2)t_2}$$
This finish the proof of the proposition.
\end{proof}

\vskip 0.5cm

We are now able to state our first main result.
\begin{Th}\label{mainL1} There are no square $L^2$-flat polynomials sequence from the class of Littlewood polynomials.
\end{Th}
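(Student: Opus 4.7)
The plan is to argue by contradiction, assuming that a sequence $(P_q)$ of Littlewood polynomials is square $L^2$-flat; by Proposition \ref{Cri1} this is equivalent to $\|P_q\|_4 \to 1$.

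My first step is to move the problem onto the arithmetic side via the decomposition $P_q = Q_q - D_q$ of \eqref{Littlewood3} together with the identity $P_q(\xi_{q,j}) = Q_q(\xi_{q,j}) = (2/\sqrt{q})\,A_q(\xi_{q,j})$ of \eqref{rootof1}, where $A_q(z) = \sum_{k \in H_q} z^k$ is the generating polynomial of the arithmetic set $H_q = \{k : \epsilon_k = +1\}$. Passing to the subsequence of odd $q$, the identity \eqref{obs2} of Lemma \ref{classical} gives
$$\|P_q\|_4^4 = \frac{1}{2q}\sum_{j=0}^{q-1}|P_q(\xi_{q,j})|^4 + \frac{1}{2q}\sum_{j=0}^{q-1}|P_q(-\xi_{q,j})|^4.$$
In the first sum the polynomial $Q_q$ carries essentially all the mass, so it becomes a sum over values of the discrete Fourier transform $DF_q(\mathbf{1}_{H_q})$. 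In the second sum $D_q$ does not vanish: a direct evaluation yields $|D_q(-\xi_{q,j})|^2 = 1/(q\cos^2(\pi j/q))$, which is precisely the pointwise squared modulus of the classical Dirichlet kernel. This brings in ``the exact computation of the $L^p$-norm of the Dirichlet kernel'' promised in the introduction.

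Next I would play the hypothesis $\|P_q\|_4 \to 1$ against the elementary lower bound $(1/q)\sum_j |P_q(\xi_{q,j})|^4 \geq 1$ (Cauchy--Schwarz applied to \eqref{obs1}), which is also valid for the $-\xi_{q,j}$ sum. Both halves of the above identity must therefore tend to $1/2$ individually, forcing the near-equalities $|A_q(\xi_{q,j})|^2 \approx q/4$ for most $j$ on one branch, and a rigid matching $Q_q(-\xi_{q,j}) \approx D_q(-\xi_{q,j})$ at the shifted roots on the other, with the Dirichlet weights concentrated near $j \approx q/2$. Translated back to $H_q$, the two constraints are an extremely rigid near-equidistribution condition on the autocorrelation function $r_{H_q}(h) = \#\{(a,b) \in H_q \times H_q : a-b \equiv h\}$, so I would invoke Gowers-type arithmetic combinatorics (control of the balanced function of $H_q$ by a $U^2$-type norm, as in the remark ``square flatness implies pairwise independence'' from the introduction) to rule out the existence of such subsets of $\{0,\dots,q-1\}$ for arbitrarily large $q$.

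The main obstacle is this last step, because the required quantitative bound $\|P_q\|_4^4 \geq 1 + c$ with an absolute $c > 0$ must be sharp enough to discriminate the $\pm 1$ case from the unimodular case, where Kahane's construction shows that flatness is achievable. This forces the proof to use in an essential way the arithmetic rigidity of subsets of $\Z/q\Z$ of density close to $1/2$, rather than $L^p$-estimates alone; once the contradiction is secured, Theorem \ref{mainL1} follows immediately via Proposition \ref{Cri1}.
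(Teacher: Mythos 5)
Your set-up tracks the paper's own proof closely: the reduction via Proposition \ref{Cri1}, the decomposition $P_q = Q_q - D_q$ with $P_q(\xi_{q,j})=Q_q(\xi_{q,j})$ for $j\neq 0$, the splitting of $\|P_q\|_4^4$ over the two cosets $\{\xi_{q,j}\}$ and $\{-\xi_{q,j}\}$ via \eqref{obs2}, and the exact evaluation of the Dirichlet kernel at the shifted roots (your $|D_q(-\xi_{q,j})|^2 = 1/(q\cos^2(\pi j/q))$ is the same computation as the identity $(HJBr)$ in the paper, leading to $\frac{1}{q}\sum_k|Q_q(-\xi_{q,k})|^4 \sim \frac{1}{3}q$). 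Up to the point where you extract the constraint on the autocorrelations of $H_q$, you and the paper are doing the same thing.

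The genuine gap is the last step, and you have half-noticed it yourself. The constraint you arrive at is $\sum_{l\neq 0} c_l^2 \sim \frac{2}{3}\cdot\frac{q^3}{16}$, i.e.\ the additive energy (number of quadruples $a-b=c-d$ in $H_q$) is asymptotically that of a \emph{random} subset of $\{0,\dots,q-1\}$ of density $\frac12$. This is a pseudorandomness condition, not an excess-structure condition: Balog--Szemer\'edi/Gowers machinery extracts arithmetic structure from sets whose additive energy is abnormally \emph{large}, and gives nothing for sets whose $U^2$-type statistics match the random model. Sets satisfying your derived constraints certainly exist (random sets do, up to the stated asymptotics), so "rule out the existence of such subsets" cannot be done at the level of the information you have collected; the contradiction does not follow. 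The paper itself makes exactly this feint ("by a variant of Balog--Szemer\'edi's theorem\dots we obtain a contradiction") and then abandons it for a "direct" argument: it passes to the differenced polynomial $(1-z^\ell)Q_q$, shows via Marcinkiewicz--Zygmund at the points $\xi_{r,\ell}$ that either some discrete Fourier coefficient $DF_r(\mathbb{1}_{H\cap[0,q-1]})(\ell)$ stays bounded away from zero (forcing $\|(1-z)Q_q\|_4\to\infty$) or $H$ is equidistributed modulo every $r$, and claims a contradiction in either branch. Whatever one thinks of that dichotomy, your proposal contains none of it, so the argument as you have written it does not close. To repair it you would need to say precisely which additional statistic of $H_q$ (beyond the $U^2$/additive-energy data) is forced by flatness and is simultaneously impossible — which is exactly the part of the problem that the $L^4$ computation alone does not settle.
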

From this, it follows that the $L^4$-conjecture and the ultraflat  conjecture of Erd\"{o}s holds for the class of Littlewood polynomials.\\

%Our proofs gives also the following theorem which may be of independent interest.

\section{Our second and third main results.}\label{spectral}

Before stating our second and third main results, we need to recall the notion of Barker sequences and some basic facts on the notion of dynamical systems arising from generalized Morse sequences.\\

\paragraph{\textbf{Baker sequences and the connection to digital communications engineering.}} Barker sequences are well-known in the streams of investigation from digital communications
engineering. Barker introduced such sequences in \cite{Barker}  to produce a low autocorrelation binary sequences, or equivalently  a binary sequence with the highest possible value of $F$. The largest well-known values of $F$ are $F_{12}=14.0833$ and $F_{10}=12.1$  obtain respectively by the following sequences $$1,-1,1,-1,1,1,-1,-1,1,1,1,1,1,$$   and
$$1,-1,1,1,-1,1,1,1,-1,-1,-1.$$
\noindent{}No other merit factor exceeding $10$ is known for any $n$. It was conjectured that $169/12$ and $121/10$ are the
maximum possible values for $F$. This conjecture still open.\\

Given a binary sequence $\mathbf{b}=(b_j)_{j=0}^{n-1}$, that is, for each $j=0,\cdots,n-1$, $b_j=\pm 1$. The $k$-th aperiodic autocorrelation of $\mathbf{b}$ is given by
$$c_k=\sum_{j=0}^{n-k-1}b_j b_{j+k},~~~~{\textrm{~~for~~}} 0 \leq k \leq n-1.$$

For $k < 0$ we put $c_k = c_{-k}$. $\mathbf{b}$ is said to be a Barker sequence if for each $k \in \big\{1,\cdots,n-1\big\}$ we have
$$|c_k| \leq 1, \textrm{~~that is,~~}c_k=0,\pm 1.$$

The Barker sequences and their generalizations have been a subject of many investigations since 1953, both from digital communications engineering view point and complex analysis viewpoint. Therefore, there is an abundant literature on the subject, we refer to \cite{Saffari-Barker}, \cite{Jedwab}, \cite{Borwein2}, \cite{Borwein3}, \cite{Mossinghoff} and the references therein for more details. Here, we remind only the following result need it.

\begin{Th}\label{Barker} Let $(b_i)_{i=1}^{n}$ be a Barker sequence with length $n$.
\begin{enumerate}
\item If $n$ is odd then $n \leq 13$, if not and $n>2$ then $n=4m^2$ for some integer $m$.
\item Assume further that there exist a Barker sequence with arbitrary length and let $P_n$ be a Littlewood polynomial whose coefficients form a Barker sequence of length $n$. Then the sequence $(P_n)$ is square $L^2$-flat.
\end{enumerate}
\end{Th}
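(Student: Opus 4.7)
My plan has two parts matching the statement. Part (1) is a classical result from the Barker-sequence literature, where the main obstacle is a combinatorial--arithmetic classification; Part (2) reduces to a one-line autocorrelation computation and an application of Proposition \ref{Cri1}.

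For Part (1), the common starting point is the parity identity: since each $c_k$ is a sum of $n-k$ values in $\{-1,+1\}$, we have $c_k \equiv n-k \pmod{2}$. Combined with the Barker constraint $|c_k| \leq 1$, this forces $c_k = 0$ whenever $n-k$ is even and $c_k \in \{-1,+1\}$ whenever $n-k$ is odd. When $n$ is odd, every odd-index autocorrelation must vanish and every even-index one must be $\pm 1$; the Turyn--Storer elimination procedure then combines this rigidity with algebraic constraints coming from overlapping quadratic relations $b_jb_{j+k} + b_{j+1}b_{j+k+1}$ to exclude all odd $n > 13$. When $n$ is even with $n>2$, set $A = \sum_j b_j = \sqrt{n}\,P_n(1)$ and $B = \sum_j (-1)^j b_j = \sqrt{n}\,P_n(-1)$. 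A Parseval-type computation from \eqref{correlation2} gives $A^2 + B^2 = 2n$, while $A \equiv B \pmod 2$ holds because $A - B = 2\sum_{j \text{ odd}} b_j$. The parity identity rules out $A, B$ both odd (which would force $n$ odd), so both are even, and hence $n \equiv 0 \pmod 4$. Turyn's refinement then analyzes the Gauss-type sum $P_n(i)$ in $\mathbb{Z}[i]$ and, using the divisibility structure of Barker autocorrelations modulo $4$, deduces that $n/4$ is itself a perfect square, yielding $n = 4m^2$. The genuine difficulty in Part (1) is the full Turyn--Storer classification for odd lengths, which is lengthy and does not admit a substantially shorter proof; I would import it from \cite{Saffari-Barker,Jedwab,Borwein2,Borwein3,Mossinghoff}.

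For Part (2), under the hypothesis that Barker sequences of arbitrary length exist, I would estimate $\|P_n\|_4$ directly via the autocorrelation expansion. Writing
\begin{equation*}
|P_n(z)|^2 \;=\; \frac{1}{n}\sum_{k=-(n-1)}^{n-1} c_k\, z^k,\qquad z \in S^1,
\end{equation*}
with $c_0 = n$ and $c_{-k} = c_k$, Parseval on $L^2(S^1, dz)$ yields
\begin{equation*}
\|P_n\|_4^{4} \;=\; \int_{S^1}\bigl|P_n(z)\bigr|^4\, dz \;=\; \frac{1}{n^2}\sum_{k=-(n-1)}^{n-1} c_k^2 \;=\; 1 + \frac{2}{n^2}\sum_{k=1}^{n-1} c_k^2 .
\end{equation*}
The Barker condition $|c_k| \leq 1$ for $k \neq 0$ then gives $\|P_n\|_4^{4} \leq 1 + \tfrac{2(n-1)}{n^2}$, which tends to $1$ as $n \to \infty$. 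Hence $\|P_n\|_4 \to 1$, and Proposition \ref{Cri1} translates this immediately into the square $L^2$-flatness of the sequence $(P_n)$, completing Part (2). The only non-trivial step here is the clean Parseval reduction, and it is routine.
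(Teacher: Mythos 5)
Your proposal is correct, and it is in fact more detailed than what the paper provides: the paper's ``proof'' of this theorem is a two-line attribution (Turyn--Storer for part (1), Saffari and Borwein--Mossinghoff for part (2)) with no computation at all. Your Part (2) supplies exactly the standard argument behind those citations: from $|P_n(z)|^2=\frac1n\sum_{k}c_k z^k$ and Parseval one gets $\|P_n\|_4^4-1=\frac{2}{n^2}\sum_{k\geq 1}c_k^2\leq \frac{2(n-1)}{n^2}\to 0$, which together with $\|P_n\|_4\geq\|P_n\|_2=1$ and Proposition \ref{Cri1} gives square $L^2$-flatness; equivalently, the merit factor of a Barker sequence of length $n$ is at least $n^2/(2(n-1))$, which is precisely the content of the reference to \cite{Borwein2}. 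For Part (1) you correctly identify that the full Turyn--Storer elimination for odd $n$ and Turyn's $n=4m^2$ result for even $n$ must be imported; one minor caution is that your intermediate step ``$A,B$ both even hence $n\equiv 0\pmod 4$'' does not quite follow from $A^2+B^2=2n$ alone (it gives $n=2(a^2+b^2)$), but since you explicitly defer the conclusion $n=4m^2$ to Turyn's refinement, this does not affect the validity of your outline. Overall: same approach as the sources the paper cites, with the easy half actually written out.
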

\begin{proof}(1) is due to Turyn and  Storer \cite{TS}. The second part (2) is essentially due to Saffari \cite{Saffari-Barker}, we refer also to the proof of Theorem 4.1 in \cite{Borwein2} line 5.

\end{proof}
\vskip 0.1cm
At this point, let us state our second main result.

\begin{Th}\label{mainL2} There are only finitely many Barker sequences.
\end{Th}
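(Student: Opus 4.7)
The plan is to deduce Theorem \ref{mainL2} as an almost formal consequence of Theorem \ref{mainL1} together with the two ingredients already collected in Theorem \ref{Barker}. I would argue by contradiction: suppose that the set of Barker sequences is infinite. Part (1) of Theorem \ref{Barker} (Turyn--Storer) shows that no Barker sequence of odd length exceeds $13$, so the hypothetical infinite family must contain sequences of arbitrarily large even length $n = 4m^2$. Hence there exists a strictly increasing sequence of integers $(n_k)_{k \geq 1}$ and, for each $k$, a Barker sequence $\mathbf{b}^{(k)} = (b_0^{(k)}, b_1^{(k)}, \dots, b_{n_k-1}^{(k)})$ of length $n_k$.

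Next I would pass to the associated Littlewood polynomials
\[
P_{n_k}(z) = \frac{1}{\sqrt{n_k}} \sum_{j=0}^{n_k-1} b_j^{(k)} z^j.
\]
To match the normalization convention $\epsilon_0 = \epsilon_{q-1} = 1$ that defines the class $\cl$, I would use that the Barker property is invariant under the global sign flip $\mathbf{b} \mapsto -\mathbf{b}$ and under reversal of the indexing, so one may assume $b_0^{(k)} = 1$. If $b_{n_k-1}^{(k)} = -1$, flipping that single coefficient perturbs $P_{n_k}$ by a trigonometric polynomial of $L^\infty$-norm $2/\sqrt{n_k} \to 0$; this perturbation manifestly preserves square $L^2$-flatness. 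After these harmless adjustments the polynomials lie in $\cl$, and part (2) of Theorem \ref{Barker} (Saffari) asserts that the resulting sequence $(P_{n_k})_{k \geq 1}$ is square $L^2$-flat.

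This, however, directly contradicts Theorem \ref{mainL1}, which states that no square $L^2$-flat sequence exists inside the class $\cl$ of Littlewood polynomials. The contradiction forces the set of Barker sequences to be finite, completing the proof.

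All of the genuine difficulty has been absorbed into Theorem \ref{mainL1}; the step above is essentially syllogistic. The only mild technical point that would need care is the normalization to the class $\cl$, and this is handled by the sign/reversal symmetries of the autocorrelation, together with the trivial observation that changing finitely many coefficients at a rate $O(n_k^{-1/2})$ cannot disturb an asymptotic $L^2$-flatness property. Consequently, no new estimates are required beyond those already established, and the statement follows immediately.
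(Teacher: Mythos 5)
Your proof is correct and follows essentially the same route as the paper: Theorem \ref{mainL2} is deduced as an immediate consequence of Theorem \ref{mainL1} combined with Theorem \ref{Barker}(2), since infinitely many Barker sequences would force unbounded lengths and hence a square $L^2$-flat sequence in the Littlewood class, a contradiction. The extra care you take with the normalization $\epsilon_0=\epsilon_{q-1}=1$ is a harmless refinement of the same syllogism.
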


\paragraph{\textbf{Weak Banach-Rohklin spectral problem and the connection with Ergodic Theory.}}In \cite{Bourgain} Bourgain showed  
that the $L^1$-flat polynomials problem in the class $\mathcal{NB}$ is related to the nature of the spectrum of 
the Class 1 maps introduced by Ornstein \cite{Orn}. This class of maps is nowadays called rank one maps and there is a large literature on it. Later, M. Guenais in  \cite{Guenais} established that  $L^1$-flat polynomials problem in the class $\mathcal{L}$ is equivalent to 
the weak Banach-Rohklin problem in the class of Morse coycle extension maps.\\

Following Ulam the Banach problem from the Scottish Book can be stated  as follows\cite[p.76]{Ulam}.

\begin{que}[Banach Problem]
Does there exist a square integrable function $f(x)$ and a measure preserving transformation $T(x)$,
$-\infty<x<\infty$, such that the sequence of functions $\{f(T^n(x)); n=1,2,3,\cdots\}$ forms a complete
orthogonal set in Hilbert space?
\end{que}
\vskip 0.1cm
Obviously, Banach problem has a positive answer in the class of non-conservative dynamical systems. For the conservative case, 
 the problem remained open until very recently, when it was answered affirmatively by the author in \cite{elabdal-Banach}. Precisely, therein,
 the author produced  a conservative ergodic 
infinite measure preserving with simple Lebesgue spectrum. For more details, we refer to \cite{elabdal-Banach}.\\

The Russian related problem to the Banach problem is known as Rohklin problem. Rohklin  asked in \cite{Rokhlin} on finding a map acting on 
the finite measure space with finite Lebesgue spectrum.  To the best knowledge of the author, this problem still open. However, 
the weak Rohklin problem on the existence of dynamical system with finite Lebesgue component was solved since 1982 
by J. Mathew and M. G. Nadkarni \cite{MN}, T. Kamae \cite{Kamae}, M. Queffelec \cite{Queffelec}, and O. Ageev \cite{Ag}. Indeed, 
the authors produced a dynamical system with Lebesgue component of multiplicity two. Fifteen years later, 
M. Guenais produced a torsion group action with Lebesgue component of multiplicity one \cite{Guenais}.
\vskip 0.1cm
  Historically, the problem on finding a map acting on a probability space with simple Lebesgue spectrum seems 
 to be initiated by Banach and Rohklin. On this problem, and more generally, on Rohklin problem,  
 Kirillov in his survey paper \cite{Kiri} wrote ``there are grounds for thinking that such examples do not exist". 
 The weak Banach-Rohklin problem raised the question of whether there exists an ergodic map acting on a probability space 
 with simple Lebesgue component plus some singular part in the spectrum. If we require only that the map is non-singular and ergodic then, 
 very recently, M. Nadkarni and the author established that the problem has an affirmative answer \cite{AbN1}.
 \vskip 0.1cm
 Here, we will summarize briefly the connection between the square $L^2$-flatness and the so-called Morse dynamical systems.
 \vskip 0.1cm

 In the symbolic dynamics language, let $B_k$ be a block of length $k$ in the alphabet $\mathfrak{A}=\{-1,+1\}$, that is,
 $B_k=(b_0,\cdots,b_{k-1}),$ $b_i \in \mathfrak{A}$. The $b_i$ are also denoted by $B_k[i]$. If $C_k$ and $D_m$ are two blocks then the concatenation operation $(CD)_{k+m}$ and the product operation of $(C \times D)_{k.m}$ is defined respectively by
 $$(CD)_{k+m}=C_k[0] \cdots C_k[k-1]D_m[0]\cdots D_m[m-1],$$
 and
 $$(C \times D)_{k.m}[s+t.k]=C_k[s]D_m[t],~~~~~~~~~s=0,\cdots,k-1,~~~~~t=0,\cdots,m-1.$$

 To each binary sequence $B \in \big\{+1,-1\big\}$ we associate the Littlewood trigonometric polynomial defined by
  $$P_{B,k}(z)=\frac1{\sqrt{k}}\sum_{j=0}^{k-1}B_k[j]z^j,~~~~~z \in S^1.$$

Given a sequence of blocks $B_{k_1},B_{k_2},\cdots$ satisfying
\begin{eqnarray}\label{CVD}
B_{n_p}[0]=1, ~~~~~~~~\forall p \in \N.
\end{eqnarray}

 The one-sided generalized  Morse sequence $A$ is defined as the coordinatewise limit of the blocks $A_{n_1.\cdots n_p}=B_{n_1} \times B_{n_2}\times \cdots \times B_{n_p}.$ Notice that convergence  is granted by the condition \eqref{CVD}.\\

 The one-sided generalized  Morse sequence $A$ is extended in the usual manner to be the bi-infinite sequence which we still denoted by $A$.\\
\vskip 0.1cm
 Let $S$ be a shift map on the compact space $\{+1,-1\}^{\Z}$ given by $S(x)[n]=x[n+1]$, $n \in \Z$, and $A \in \{+1,-1\}^{\Z}$,
be a generalized Morse sequence. The Morse flow is defined as the subshift generated by $A$, that is, the topological dynamical
system $(X_A,S)$ where $X_A$ is the closure of the orbit of $A$ under $S$. \\

In the language of cocycles, the Morse flow can be defined as $2$-point extension of the odometer. The associated cocycle $\phi$  
is defined inductively, and it is continuous at all points except at one point. $\phi$ is also called a continuous Morse cocycle.\\

The generalized Morse sequences and Morse cocycle has been the subject of many investigations and publication, for more details we refer to \cite{Kdown}, \cite{Guenais} and the references therein.\\

We remind that  M. Guenais in \cite{Guenais} established that the Morse cocycle has a simple Lebesgue spectrum if and only if there exists a sequence of $L^1$-flat polynomials.\\
\vskip 0.1cm
For a continuous cocycle, Downarowicz-Lacroix in \cite{Down} proved the following:
\begin{Th}\label{dow-L} The dynamical system arising from the continuous Morse sequence $A=(A_n)$ has a simple Lebesgue component if and only if the polynomials $(P_{A,n}(z))$ are square $L^2$-flat.
\end{Th}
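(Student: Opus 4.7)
The plan is to exploit the structure of the Morse flow as a $\{\pm 1\}$-cocycle extension of the odometer and identify the ``new'' spectral component in terms of the measures $|P_{A,n}|^2\, dz$. Writing $(X_A,S)$ as a two-point extension of the odometer on $\hat G = \prod_k \Z/n_k\Z$ by the cocycle $\phi$, the Hilbert space decomposes as $L^2(X_A) = H_0 \oplus H_1$, where $H_0 \cong L^2(\hat G)$ carries the pure discrete odometer spectrum and $H_1$ is the piece on which $\phi$ acts by a sign change. Any continuous Lebesgue component of $(X_A,S)$ must live inside $H_1$.

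First I would identify the maximal spectral measure on $H_1$. Picking a natural generator $f \in H_1$ coming from the time-zero coordinate and using the product structure $A_{n_1\cdots n_p} = B_{n_1}\times\cdots\times B_{n_p}$ together with the factorization $P_{A_p}(z) = \prod_{k=1}^{p} P_{B_{n_k}}(z^{n_1\cdots n_{k-1}})$, one verifies that the Fourier coefficients satisfy $\widehat{\sigma}_f(k) = \lim_{n} \widehat{|P_{A,n}|^2}(k)$, so that $|P_{A,n}|^2\, dz \to \sigma_f$ in the weak-$*$ topology on measures.

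Next I would apply Parseval: $\|P_{A,n}\|_4^4 = \sum_k \bigl|\widehat{|P_{A,n}|^2}(k)\bigr|^2$, and if $\sigma_f = h\, dz$ with density $h \in L^2(dz)$, the limit of this sum equals $\|h\|_2^2$. Combined with Proposition \ref{Cri1}, square $L^2$-flatness of $(P_{A,n})$ is equivalent to $\sigma_f = dz$ \emph{together with} strong $L^2$-convergence $|P_{A,n}|^2 \to 1$, not merely weak-$*$ convergence. This is the analytic characterization that will serve as the bridge to the spectral side.

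The main obstacle is translating the analytic condition (strong $L^2$-flatness of $|P_{A,n}|^2$ to $1$) into the spectral-multiplicity statement (simple Lebesgue component). One direction is clean: if $\sigma_f = dz$, then the cyclic subspace $Z(f) = \overline{\mathrm{span}}\{U^n f : n \in \Z\}$ already carries Lebesgue spectrum of multiplicity one, so there is at least a Lebesgue component. The harder direction is showing that \emph{simple} (multiplicity exactly one) on $H_1$ forces $Z(f) = H_1$, ruling out a second orthogonal Lebesgue component that would push the multiplicity to two. The expected mechanism is approximation: elements of $H_1$ are approximated by polynomials in $U^n f$, with the approximation error controlled by $\bigl\||P_{A,n}|^2 - 1\bigr\|_2$, which vanishes precisely under square $L^2$-flatness. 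Ensuring this approximation is tight enough to rule out multiplicity two, rather than merely giving an inequality, is the crucial technical step and is where the two-point extension structure, together with the inductive factorization of $P_{A,p}$, must be invoked carefully.
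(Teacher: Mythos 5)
First, a point of reference: the paper does not prove this theorem at all --- it is quoted verbatim from Downarowicz--Lacroix \cite{Down}, so there is no internal proof to compare against. Your skeleton is nonetheless the standard one for that result: the decomposition $L^2(X_A)=H_0\oplus H_1$ of a two-point extension of the odometer, the identification of the maximal spectral type on $H_1$ with the generalized Riesz product $\lim_p |P_{A,p}|^2\,dz$ via the factorization $P_{A,p}(z)=\prod_k P_{B_{n_k}}(z^{n_1\cdots n_{k-1}})$, and Parseval's identity $\|P_{A,p}\|_4^4=\sum_k\bigl|\widehat{|P_{A,p}|^2}(k)\bigr|^2$ are all the right ingredients, and the direction ``square $L^2$-flat $\Rightarrow$ Lebesgue component'' goes through essentially as you describe.

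The converse direction, however, has a genuine gap that your own formulation exposes. Knowing that $(X_A,S)$ has a Lebesgue component only tells you that the weak-$*$ limit $\sigma_f$ of $|P_{A,n}|^2\,dz$ is not purely singular; to conclude that $\sigma_f=dz$ \emph{exactly} you must invoke the purity/dichotomy law for these generalized Riesz products (quasi-invariance under the dense group of $m_k$-th roots of unity forces an absolutely continuous part to have constant density --- this is the Helson--Parry/Queff\'elec argument the paper alludes to at the end of Section 6), which you never do. More seriously, even granting $\sigma_f=dz$, weak-$*$ convergence of $|P_{A,n}|^2\,dz$ to $dz$ does \emph{not} imply $\bigl\||P_{A,n}|^2-1\bigr\|_2\to 0$: take any polynomials with $|P_n(z)|^2=1+\tfrac12(z^n+z^{-n})$; these converge weak-$*$ to $dz$ while $\|P_n\|_4^4\equiv 3/2$. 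Ruling out this escape of $L^2$-mass to high frequencies is precisely the content of Downarowicz--Lacroix's key lemma on how the merit factor behaves under the block product $B\times C$ (equivalently, the martingale-type stabilization of the Fourier coefficients of the partial Riesz products), and it is absent from your argument. Finally, the multiplicity-one assertion is flagged as ``the crucial technical step'' but not carried out; as written, your proposal establishes at best ``Lebesgue component of some multiplicity $\Leftrightarrow$ (weak-$*$ flatness)'', which is strictly weaker on both sides of the stated equivalence.
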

%\begin{Th}\label{mainB} Barker sequences of arbitrary length do not exist.
%\end{Th}
\vskip 0.2cm

Our third main result concern the spectrum of dynamical system arising from the generalized Morse sequences, and it can be stated as follows
\begin{Th}\label{mainL3}
The spectrum of any Morse flow arising from continuous Morse sequence is singular.
\end{Th}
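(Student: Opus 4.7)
The plan is to combine Theorem \ref{mainL1} with the Downarowicz--Lacroix criterion recalled in Theorem \ref{dow-L}. The first step is to observe that for a continuous Morse sequence $A$, the associated polynomials $P_{A,n}$ of Section \ref{spectral} have coefficients in $\{\pm 1\}$ with first coefficient equal to $+1$ by condition \eqref{CVD}. After the trivial sign normalization (which preserves the $L^p$-norms of $|P_{A,n}|^2$ and in particular the notion of square $L^2$-flatness), they can be regarded as elements of the Littlewood class $\cl$ as defined at the beginning of Section \ref{intro}.

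With the polynomials identified as lying in $\cl$, Theorem \ref{mainL1} applies and yields that $(P_{A,n})$ fails to be square $L^2$-flat. Plugging this into the equivalence of Theorem \ref{dow-L} immediately rules out the existence of a simple Lebesgue component in the maximal spectral type of the Morse flow $(X_A,S)$.

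To upgrade ``no simple Lebesgue component'' to outright singularity of the spectrum, I would invoke the standard structural description of a continuous Morse flow as a $2$-point group extension of the associated odometer. The odometer factor contributes pure discrete spectrum, which is trivially singular with respect to Lebesgue measure on $S^1$. On the orthogonal complement of the odometer factor, the maximal spectral type is governed by the Morse cocycle and is realized as the weak-$*$ limit of measures built from $|P_{A,n}|^2\,dz$, that is, as a generalized Riesz product in the sense of Peyri\`ere. A classical dichotomy for such products asserts that the resulting spectral measure is either equivalent to Lebesgue measure on $S^1$ (which would bring back the simple Lebesgue component that the previous step has just ruled out) or is purely singular. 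We must therefore be in the singular case, so both the pure point and the continuous parts of the spectrum are singular, and the theorem follows.

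The only substantive ingredient in the argument is Theorem \ref{mainL1}; the remaining pieces, namely the Downarowicz--Lacroix equivalence and the Peyri\`ere-type dichotomy for cocycle Riesz products, are part of the standard toolkit for Morse dynamics. Consequently the only possible obstacle I foresee is the careful packaging of the dichotomy in the \emph{continuous} Morse setting, but this is precisely the generality in which Theorem \ref{dow-L} is stated, so I do not expect any real difficulty beyond some bookkeeping.
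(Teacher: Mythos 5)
Your proposal is correct and follows essentially the same route as the paper: Theorem \ref{mainL1} rules out square $L^2$-flatness of $(P_{A,n})$, Theorem \ref{dow-L} then excludes a simple Lebesgue component, and the purity law for Morse cocycles (Helson--Parry \cite{Helson}, Queffelec \cite{Queffelec}), i.e.\ your Peyri\`ere-type dichotomy for the associated generalized Riesz product, upgrades this to singularity of the whole spectrum. The paper's own proof is exactly this two-line reduction, so no further comparison is needed.
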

\section{Proof of the main results}\label{proof1}
We start by recalling the following special case of the $L^\alpha$-flatness criterion from \cite{elabdal-Banach}. 
We include the proof for sake of completeness.
\begin{Prop}Let $\alpha>1$ and $(P_q(z))_{q \geq 0}$ be a sequence of $L^2$-normalized polynomials such 
that for each $q$ the degree of $P_q$ is $q-1$. Then $P_q,~~~~q=1,\cdots,$ are square $L^\alpha$-flat if and only if
$$\frac1{q}\sum_{j=0}^{q-1}\Big|\big|P_q(\xi_{q,j})\big|^2-1\Big|^\alpha \tend{q}{+\infty}0,$$
and
$$\frac1{q}\sum_{j=0}^{q-1}\Big|\big|P_q(\xi_{2q,2j+1})\big|^2-1\Big|^\alpha \tend{q}{+\infty}0.$$
\end{Prop}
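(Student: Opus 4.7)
The plan is to apply the Marcinkiewicz--Zygmund equivalence (Theorem \ref{MZ}) to an auxiliary analytic polynomial built from $|P_q|^2-1$, and then to split the set of $2q$-th roots of unity into the $q$-th roots and the odd-indexed $2q$-th roots, which is exactly the partition that produces the two sums in the statement.

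First I would introduce the polynomial
\[
R_q(z) \egdef z^{q-1}\bigl(|P_q(z)|^2-1\bigr).
\]
Writing $P_q(z)=\sum_{k=0}^{q-1}a_k z^k$, on the unit circle one has $\overline{P_q(z)}=\sum_{k=0}^{q-1}\overline{a_k}z^{-k}$, so $|P_q(z)|^2$ is a Laurent polynomial supported on exponents in $\{-(q-1),\dots,q-1\}$. Multiplying by $z^{q-1}$ (and absorbing the constant $-1$) converts it into an \emph{analytic} polynomial of degree at most $2q-2$. Since $|z^{q-1}|=1$ on $S^1$, we also have $|R_q(z)|=\bigl||P_q(z)|^2-1\bigr|$, and therefore
\[
\int_{S^1}\bigl||P_q(z)|^2-1\bigr|^\alpha\,dz=\int_{S^1}|R_q(z)|^\alpha\,dz.
\]

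Next, because $\deg R_q\le 2q-2\le 2q$, Theorem \ref{MZ} applied with $n=2q$ produces constants $0<A_\alpha\le B_\alpha$, independent of $q$, such that
\[
\frac{A_\alpha}{2q}\sum_{j=0}^{2q-1}\bigl||P_q(\xi_{2q,j})|^2-1\bigr|^\alpha\le\int_{S^1}\bigl||P_q|^2-1\bigr|^\alpha\,dz\le\frac{B_\alpha}{2q}\sum_{j=0}^{2q-1}\bigl||P_q(\xi_{2q,j})|^2-1\bigr|^\alpha.
\]
I would then split the sum over $j=0,\dots,2q-1$ by parity: the even indices $j=2k$ satisfy $\xi_{2q,2k}=\xi_{q,k}$, while the odd indices give exactly $\xi_{2q,2k+1}$ for $k=0,\dots,q-1$. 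The averaged sum becomes
\[
\frac{1}{2q}\sum_{j=0}^{2q-1}\bigl||P_q(\xi_{2q,j})|^2-1\bigr|^\alpha=\frac12\Bigl(\frac{1}{q}\sum_{k=0}^{q-1}\bigl||P_q(\xi_{q,k})|^2-1\bigr|^\alpha+\frac{1}{q}\sum_{k=0}^{q-1}\bigl||P_q(\xi_{2q,2k+1})|^2-1\bigr|^\alpha\Bigr).
\]

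Substituting this identity back into the MZ sandwich shows that $\int_{S^1}||P_q|^2-1|^\alpha\,dz\to 0$ is equivalent to the sum of the two nonnegative averages tending to $0$, which in turn, since each summand is nonnegative, is equivalent to each average tending to $0$ separately. I do not anticipate a genuine obstacle: the whole argument reduces to identifying the correct auxiliary polynomial $R_q$ and then to a bookkeeping split of $2q$-th roots by parity. The one point deserving care is the degree bound on $R_q$, which rests on the explicit Laurent structure of $|P_q|^2$ on $S^1$; and the assumption $\alpha>1$ is exactly what allows the use of the \emph{two-sided} Marcinkiewicz--Zygmund inequality rather than just the upper half.
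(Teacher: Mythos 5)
Your proof is correct and is essentially the paper's argument: the paper's one-line proof invokes Marcinkiewicz--Zygmund together with the observation $\xi_{2q,2j}=\xi_{q,j}$, and your write-up simply supplies the details (the auxiliary analytic polynomial $z^{q-1}(|P_q|^2-1)$ of degree $\le 2q-2$, the application of Theorem \ref{MZ} at the $2q$-th roots of unity, and the parity split). Nothing further is needed.
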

\begin{proof}This is an easy application of Marcinkiewicz-Zygmund inequalities combined with 
the following obvious observation:
$$\xi_{2q,2j}=\xi_{q,j},~~~~~~~~~~~~~~~j=0,\cdots,q-1.$$
\end{proof}
As an easy consequence we have the following corollary.
\begin{Cor}\label{appMZ} 
 Let $(P_q(z))_{q \geq 0}$ be a sequence of $L^2$-normalized polynomials such 
that for each $q$ the degree of $P_q$ is $q-1$. Then  $P_q,~~~~~q=1,\cdots,$ are square $L^2$-flat if and only if
$$\frac1{q}\sum_{j=0}^{q-1}\Big|\big|P_q(\xi_{q,j})\big|^2-1\Big|^2 \tend{q}{+\infty}0,$$
and
$$\frac1{q}\sum_{j=0}^{q-1}\Big|\big|P_q(\xi_{2q,2j+1})\big|^2-1\Big|^2 \tend{q}{+\infty}0.$$
\end{Cor}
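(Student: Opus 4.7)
The plan is to observe that this corollary is just the specialization $\alpha=2$ of the preceding proposition, so in principle one line suffices. Nevertheless, I would give a self-contained proof that does not need the constants $A_\alpha, B_\alpha$ of Marcinkiewicz--Zygmund because, for $\alpha=2$, Lemma~\ref{classical} furnishes exact identities. This will also make transparent where the two node sets $\{\xi_{q,j}\}$ and $\{\xi_{2q,2j+1}\}$ come from.

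First, I would use Proposition~\ref{Cri1} to rephrase square $L^2$-flatness of $(P_q)$ as the statement $\int_{S^1}|P_q|^4\,dz\to 1$. Next, I would pin down the $L^2$-identity at both node sets: by \eqref{obs1}, $\tfrac1q\sum_{j=0}^{q-1}|P_q(\xi_{q,j})|^2=\|P_q\|_2^2=1$; and a one-line character sum (since $|P_q|^2$ has Fourier support in $\{-(q-1),\ldots,q-1\}$, and $\sum_{j=0}^{q-1}\xi_{2q,2j+1}^{\ell}=q\,e^{\pi i\ell/q}\mathbb{1}_{\ell\equiv 0\,(\mathrm{mod}\,q)}$ vanishes for $0<|\ell|\le q-1$) gives likewise $\tfrac1q\sum_{j=0}^{q-1}|P_q(\xi_{2q,2j+1})|^2=1$. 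Expanding each square and using these two identities yields
\begin{align*}
\tfrac1q\sum_{j=0}^{q-1}\bigl||P_q(\xi_{q,j})|^2-1\bigr|^2 &= \tfrac1q\sum_{j=0}^{q-1}|P_q(\xi_{q,j})|^4 - 1,\\
\tfrac1q\sum_{j=0}^{q-1}\bigl||P_q(\xi_{2q,2j+1})|^2-1\bigr|^2 &= \tfrac1q\sum_{j=0}^{q-1}|P_q(\xi_{2q,2j+1})|^4 - 1.
\end{align*}

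The third step is to tie the two right-hand sides to $\int|P_q|^4\,dz$. Using $\xi_{2q,2j}=\xi_{q,j}$, the $2q$-th roots of unity split as $\{\xi_{q,j}\}\cup\{\xi_{2q,2j+1}\}$; and since $|P_q|^4$ is a Laurent polynomial of Fourier support contained in $\{-(2q-2),\ldots,2q-2\}$, the average over all $2q$-th roots of unity exactly reproduces $\int|P_q|^4\,dz$. Adding the two displays above therefore gives
\[
\tfrac1q\sum_{j=0}^{q-1}\bigl||P_q(\xi_{q,j})|^2-1\bigr|^2 + \tfrac1q\sum_{j=0}^{q-1}\bigl||P_q(\xi_{2q,2j+1})|^2-1\bigr|^2 = 2\Bigl(\int_{S^1}|P_q|^4\,dz-1\Bigr).
\]

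Finally, both summands on the left are non-negative, so their vanishing in the limit is equivalent to the vanishing of the sum, which by the identity above is equivalent to $\int|P_q|^4\,dz\to 1$; by Step one, this is equivalent to square $L^2$-flatness. No obstacle is really present here: the only thing that could go wrong is overlooking that the $L^2$-normalization is preserved at the shifted nodes $\xi_{2q,2j+1}$, but the short character-sum calculation above handles that. If one instead prefers to quote the preceding proposition at $\alpha=2$, only the observation $\xi_{2q,2j}=\xi_{q,j}$ is needed, which is already recorded in its proof.
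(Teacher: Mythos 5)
Your proof is correct. The paper itself disposes of this corollary in one line, as the case $\alpha=2$ of the preceding proposition, whose proof in turn rests on the Marcinkiewicz--Zygmund inequalities (Theorem \ref{MZ}) together with the observation $\xi_{2q,2j}=\xi_{q,j}$; so the paper's mechanism is a two-sided norm equivalence with unspecified constants $A_\alpha,B_\alpha$. What you do instead is exploit the fact that for $\alpha=2$ no inequality is needed: since $|P_q|^2$ has Fourier support in $\{-(q-1),\dots,q-1\}$ and $|P_q|^4$ in $\{-(2q-2),\dots,2q-2\}$, the $q$-point and $2q$-point quadratures are exact, and your character-sum computation at the shifted nodes $\xi_{2q,2j+1}$ correctly shows that the $L^2$-normalization persists there. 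The resulting identity
\[
\frac1q\sum_{j=0}^{q-1}\Bigl|\bigl|P_q(\xi_{q,j})\bigr|^2-1\Bigr|^2+\frac1q\sum_{j=0}^{q-1}\Bigl|\bigl|P_q(\xi_{2q,2j+1})\bigr|^2-1\Bigr|^2=2\Bigl(\int_{S^1}|P_q|^4\,dz-1\Bigr)
\]
is strictly stronger than the stated equivalence (it gives the exact constant $2$ rather than a sandwich between $A_2$ and $B_2$), and combined with Proposition \ref{Cri1} and the non-negativity of both summands it yields the corollary at once. This is in the spirit of Lemma \ref{classical} rather than of Theorems \ref{MZ}--\ref{MZ2}, and it has the additional merit of making visible why precisely these two node sets appear: together they form the full set of $2q$-th roots of unity, which is the minimal exact quadrature set for $|P_q|^4$. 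The only caveat is cosmetic: your identity is what the paper's later computations (e.g.\ \eqref{identity2} and the proof of Theorem \ref{mainL1}) implicitly use anyway, so your version could be cited there to avoid invoking the MZ constants.
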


We further need the following proposition due to Jensen-Jensen and H{\o}holdt  \cite{Hoholdt-JJ} (see also \cite{elabdal-little}). 
We will present a simple proof of it in section \ref{dynamical}.
\begin{Prop}
\label{JJH}Let $(P_q(z))_{q \geq 0}$ be a sequence of  $L^2$-normalized Littlewood polynomials. Suppose that
$$\frac{\#\big\{~~j~~: \epsilon_j=-1\big\}}{q} \longrightarrow \fr(-1)$$ as $q\longrightarrow +\infty$. If $\fr(-1) \neq \frac12$ then
$$\Big\|P_q\Big\|_4 \tend{q}{+\infty}+\infty.$$
\end{Prop}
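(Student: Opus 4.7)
The plan is to exploit a single pointwise value, namely $P_q(1)$, which in the unbalanced regime is forced to be as large as $\sqrt{q}$, and then to convert this pointwise peak into an $L^4$ lower bound via Marcinkiewicz--Zygmund.

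First I would evaluate $P_q$ at $z=1$. Writing $m_q\egdef\#\{j\;:\;\epsilon_j=+1\}$ one has $\sum_{j=0}^{q-1}\epsilon_j=2m_q-q$, so
\begin{equation*}
|P_q(1)|^2=\frac{(2m_q-q)^2}{q}=q\Bigl(1-2\,\tfrac{\#\{j:\epsilon_j=-1\}}{q}\Bigr)^{2}.
\end{equation*}
Under the hypothesis, $\delta\egdef|1-2\fr(-1)|>0$, and thus $|P_q(1)|^2\sim \delta^{2}q\to+\infty$. (Equivalently, this drops out of the decomposition \eqref{Littlewood3}: $P_q(1)=Q_q(1)-D_q(1)=\frac{2m_q}{\sqrt q}-\sqrt q$.)

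Next, since $P_q$ is an analytic trigonometric polynomial of degree $q-1\leq q$, Marcinkiewicz--Zygmund (Theorem~\ref{MZ}) applied with $\alpha=4$ and $n=q$ gives
\begin{equation*}
\|P_q\|_{4}^{4}\;\geq\;\frac{A_{4}}{q}\sum_{j=0}^{q-1}\bigl|P_q(\xi_{q,j})\bigr|^{4}\;\geq\;\frac{A_{4}}{q}\,|P_q(1)|^{4}\;\geq\;A_{4}\,\delta^{4}\,q\,\bigl(1+o(1)\bigr),
\end{equation*}
where only the $j=0$ summand is retained in the second inequality. Letting $q\to+\infty$ yields $\|P_q\|_4\to+\infty$, as required. (For $q$ odd one can bypass MZ by using the exact identity \eqref{obs2}, which gives the same conclusion with the explicit constant $1/2$ in place of $A_4$.)

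There is really no obstacle: the entire content is that an unbalanced sign pattern produces a $\sqrt q$-size spike of $P_q$ at the point $1$, and any sampling-type lower bound on the $L^{4}$ norm already captures this spike. The statement is sharp in the sense that $\fr(-1)=\tfrac12$ (the balanced case) is precisely the regime in which this elementary obstruction disappears and the deeper machinery of the paper is needed.
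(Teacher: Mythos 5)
Your proof is correct and is essentially the paper's own argument: the paper likewise evaluates $P_q(1)=\frac{1}{\sqrt q}\bigl(\#\{j:\epsilon_j=1\}-\#\{j:\epsilon_j=-1\}\bigr)$ and feeds the resulting $\sqrt q$-size spike into the Marcinkiewicz--Zygmund lower bound (phrased contrapositively there, assuming $\|P_q\|_4$ bounded and deducing the frequencies must balance). No substantive difference.
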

We are now able to prove our first main result (Theorem \ref{mainL1}).
\begin{proof}[\textbf{Proof of Theorem \ref{mainL1}.}]
Assume that there exists a sequence $(P_q)$ of $L^2$-normalized Littlewood polynomials such that the $L^4$-norm $(P_q)$ converge to 1 
as $q\longrightarrow+\infty$. Furthermore, without loss of generality, assume that the degree of $(P_q)$ are odd. Then, 
by appealing to Proposition \ref{Cri1}, the sequence of the polynomials $(P_q)$ is square $L^2$-flat. We 
thus have
$$\frac1{q}\sum_{j=0}^{q-1}\Big|\big|P_q(\xi_{q,j})\big|^2-1\Big|^2 \tend{q}{+\infty}0,$$
by Corollary \ref{appMZ}. Combining this with \eqref{rootof1}, we obtain
 $$\frac1{q}\sum_{j=1}^{q-1}\Big|\big|Q_q(\xi_{q,j})\big|^2-1\Big|^2 \tend{q}{+\infty}0.$$
%Now, it is a simple matter to see that we have
%$$\big|Q_q(\xi_{q,j})\big|^2-1=\frac{4|H|}{q}-1+ \frac{4}{q}\sum_{l \neq 0}$$
Consequently,
\begin{eqnarray}\label{id1}
\frac1{q}\sum_{j=1}^{q-1}\big|Q_q(\xi_{q,j})\big|^4-\frac2{q}\sum_{j=1}^{q-1}\big|Q_q(\xi_{q,j})\big|^2+
\frac{q-1}{q} \tend{q}{+\infty}0.
\end{eqnarray}
Now, applying Lemma \ref{classical},  we get
\begin{eqnarray}\label{id2}
\quad \quad\frac1{q}\sum_{j=1}^{q-1}\Big|\big|Q_q(\xi_{q,j})\big|^2-1\Big|^2 &=&
\frac1{q}\sum_{j=0}^{q-1}\big|Q_q(\xi_{q,j})\big|^4-\frac{\big|Q_q(1)\big|^4}{q}-2 \int_{S^1}|Q_q|^2 dz+\\ && 
2\frac{|Q_q(1)\big|^2}{q}+\frac{q-1}{q}
\tend{q}{+\infty}0, \nonumber
\end{eqnarray}
%where $\beta=\frac1{B}<1.$ 
We further have
%\begin{eqnarray}
%&&\beta.\int_{S^1}|Q_q|^4 dz-\frac{\big|Q_q(1)\big|^4}{q}-2 \int_{S^1}|Q_q|^2 dz+ 2\frac{|Q_q(1)\big|^2}{q}+\frac{q-1}{q} \nonumber\\
%&=&\beta.\int_{S^1}|Q_q|^4 dz-\frac{16\big|H\big|^4}{q^3}-2 \int_{S^1}|Q_q|^2 dz+\frac{8|H|^2}{q^2}+\frac{q-1}{q},
%\end{eqnarray}
%since
$$|Q_q(1)\big|^2=\Big(\frac{2|H|}{\sqrt{q}}\Big)^2=\frac{4|H|^2}{q}, $$
and
$$|Q_q(1)\big|^4=\frac{16|H|^4}{q^2}.$$
Moreover, for any $z \in S^1$,
$$|Q_q(z)\big|^2=\frac{4|H|}{q}+\frac{4}{q}\sum_{l \neq 0 }c_l z^l, $$
where $(c_l)$ are the autocorrelation coefficients of $\{\eta_j\}_{j=0}^{q-1}.$ These autocorrelation coefficients are also 
called {\it{the autocorrelation coefficients of $H$.}}\\

\noindent Integrating, we get
$$\int_{S^1}|Q|^2 dz=\frac{4|H|}{q}.$$
We can thus rewrite \eqref{id2} as follows
\begin{eqnarray*}
\frac1{q}\sum_{j=1}^{q-1}\Big|\big|Q_q(\xi_{q,j})\big|^2-1\Big|^2
= \underbrace{\frac1{q}\sum_{j=0}^{q-1}\big|Q_q(\xi_{q,j})\big|^4-\frac{16\big|H\big|^4}{q^3}-\frac{8|H|}{q}+\frac{8|H|^2}{q^2}+\frac{q-1}{q}}_{\xdownarrow{0.5cm}q \rightarrow +\infty}\\
0 \quad \quad \quad \quad \quad \quad \quad \quad \quad \quad \quad \quad
\end{eqnarray*}
Taking into account our assumption, we see by Proposition \ref{JJH} that
$$-\frac{8|H|}{q}+\frac{8|H|^2}{q^2}+ \frac{q-1}{q} \tend{q}{+\infty}-1.$$
This allows us to assert that the sequence
$ \ds \Big(\frac1{q}\sum_{j=0}^{q-1}\big|Q_q(\xi_{q,j})\big|^4-\frac{16\big|H\big|^4}{q^3}\Big)_{q \geq 1}$ \\converge to $1$ as
$q\longrightarrow +\infty$ .\\

At this point, we claim that for $q$ large enough,
\begin{eqnarray}\label{equiv-D}
\|Q_q\|_4 \sim \|D_q\|_4. 
\end{eqnarray}

To see this, write
\begin{eqnarray*}
\|Q_q\|_4&=&\|Q_q-D_q +D_q\|_4,
\end{eqnarray*}
and by appealing to the triangle inequality, we get
\begin{eqnarray}\label{triangle}
\|D_q\|_4- \|Q_q-D_q \|_4 \leq \|Q_q\|_4 \leq \|Q_q-D_q\|_4 +\|D_q\|_4.
\end{eqnarray}
%Whence
%\begin{eqnarray}\label{key-id1}
%\|Q_q\|_4^4 \leq \Big(\|Q_q-D_q\|_4 +\|D_q\|_4\Big)^4.
%\end{eqnarray}
Moreover, a straightforward computation gives 
\begin{eqnarray}\label{Dirichlet:1}
\big|D_q(z)\big|^2=1+\frac1{q}\sum_{\ell \neq 0} (q-\big|\ell\big|) z^{\ell},
\end{eqnarray}
and
\begin{eqnarray}\label{Dirichlet:2}
\big\|D_q(z)\big\|_4^4&=&1+\frac2{q^2}\sum_{\ell = 1}^{q-1} \ell^2, \nonumber \\
&=&1+\frac2{q^2}.\frac{(q-1)q(2q-1)}6. \nonumber \\
&=&\frac{2}{3}q+\frac{1}{3q}.
\end{eqnarray}
Therefore, for  $q$ large enough,
$$\big\|D_q\big\|_4^4 \sim \frac23.q.$$
Combining this with our assumption, we obtain
%$$\Big(\|Q_q-D_q\|_4 +\|D_q\|_4\Big)^4 \sim \|D_q\|_4^4,$$
%since
$$
\frac{\|Q_q-D_q\|_4 +\|D_q\|_4}{\|D_q\|_4}=\frac{\|Q_q-D_q\|_4}{\|D_q\|_4}+1 \tend{q}{+\infty}1$$
and
$$
\frac{\|D_q\|_4-\|Q_q-D_q\|_4}{\|D_q\|_4}=1-\frac{\|Q_q-D_q\|_4}{\|D_q\|_4}\tend{q}{+\infty}1.$$
Whence, by \eqref{triangle},
$$\frac{\big\|Q\big\|_4}{\big\|D_q\big\|_4} \tend{q}{+\infty}1,$$
%Letting $q \longrightarrow +\infty$, we see that
%$$\Big(\frac{\|Q_q-D_q\|_4}{\|D_q\|_4}+1\Big)^4 \longrightarrow 1.$$
which ends the proof of the claim.\\

%Now, subtracting the quantity $\ds \frac{16|H|^4}{q^3}$ in \eqref{triangle}, 
\noindent Applying Lemma \ref{classical} again, it follows that
 \begin{eqnarray}\label{key-id2}
 \frac{\big|Q_q(1)\big|^4}{2q}+
 \frac1{2q}\sum_{j=1}^{q-1}\big|Q_q(\xi_{q,j})\big|^4+
 \frac1{2q}\sum_{j=0}^{q-1}\big|Q_q(-\xi_{q,j})\big|^4=\|Q_q\|_4^4.
\end{eqnarray}
Hence
\begin{eqnarray}\label{key-id3}
 \frac{\big|Q_q(1)\big|^4}{2q^2}+
 \frac1{2q^2}\sum_{j=1}^{q-1}\big|Q_q(\xi_{q,j})\big|^4+
 \frac1{2q^2}\sum_{j=0}^{q-1}\big|Q_q(-\xi_{q,j})\big|^4=\frac{\|Q_q\|_4^4}{q}.
\end{eqnarray}
Therefore 
\begin{eqnarray}\label{key-id4}
\frac{\big|Q_q(1)\big|^4}{2q^2}+
 \frac1{2q^2}\sum_{j=0}^{q-1}\big|Q_q(-\xi_{q,j})\big|^4 \leq \frac{\|Q_q\|_4^4}{q}.
\end{eqnarray} 
We further notice that for $q$ large enough
\begin{eqnarray}\label{key-id5}
\frac{16|H|^4}{q^3}\sim q.
\end{eqnarray}
At this point we claim that for $q$ large enough we have
$$\frac{1}{q}\sum_{j=0}^{q-1}\big|Q_q(-\xi_{q,j})\big|^4 \sim \frac{1}{3}q+\frac{2}{3q}.$$
Indeed, by the well-known Lagrangian interpolation formula, we have
$$Q(z)=\frac{1}{q}\sum_{j=0}^{q-1}\xi_{q,j}\frac{z^q-1}{z-\xi_{q,j}}Q(\xi_{q,j}).$$
Whence
\begin{eqnarray}\label{interpol1}
Q(-\xi_{q,k})&=&\frac{2}{q}\sum_{j=0}^{q-1}\frac{\xi_{q,j}}{\xi_{q,k}+\xi_{q,j}}Q_q(\xi_{q,j}) \nonumber\\
&=&\frac{2}{q}\frac{1}{1+\xi_{q,k}}Q_q(1)+\frac{2}{q}\sum_{j=1}^{q-1}\frac{\xi_{q,j}}{\xi_{q,k}+\xi_{q,j}}P_q(\xi_{q,j})\\
&=&\frac{2}{q}\frac{Q_q(1)}{1+\xi_{q,k}}+P_q(-\xi_{q,k})-\frac{2}{q}\frac{P_q(1)}{1+\xi_{q,k}} \nonumber
\end{eqnarray}
The second identity follows from \eqref{rootof1}. We thus get from \eqref{Littlewood3} the following identity
\begin{eqnarray}\label{rootf7}
Q(-\xi_{q,k})=\frac{2}{q}\frac{D_q(1)}{1+\xi_{q,k}}+P_q(-\xi_{q,k}).
\end{eqnarray}
Applying the triangle inequalities we obtain
\begin{eqnarray}\label{eqkeyf1}
&&D_q(1)\Big(\frac{1}{q}\sum_{k=0}^{q-1}\frac{16}{q^4}\frac{1}{\big|1+\xi_{q,k}\big|^4}\Big)^\frac{1}{4}
-\Big(\frac{1}{q}\sum_{k=0}^{q-1}|P_q(-\xi_{q,k})|^4\Big)^\frac{1}{4} \leq \nonumber\\
&& \Big(\frac{1}{q} \sum_{k=0}^{q-1}|Q_q(-\xi_{q,k})|^4\Big)^\frac{1}{4},
\end{eqnarray}
and
\begin{eqnarray}\label{eqkeyf2}
&&\Big(\frac{1}{q} \sum_{k=0}^{q-1}|Q_q(-\xi_{q,k})|^4\Big)^\frac{1}{4} \leq \nonumber
\\
&& D_q(1)\Big(\frac{1}{q}\sum_{k=0}^{q-1}\frac{16}{q^4}\frac{1}{\big|1+\xi_{q,k}\big|^4}\Big)^\frac{1}{4}
+\Big(\frac{1}{q}\sum_{k=0}^{q-1}|P_q(-\xi_{q,k})|^4\Big)^\frac{1}{4}.
\end{eqnarray}
Moreover, by taking into account the following estimation from \cite{Hoholdt-Jensen}, \cite[Art.67,68]{Bromwich}\footnote{The identity (HJBr) can be obtained as a consequence of \eqref{four2} by noticing that for every $k=0,\cdots,q-1$,
$$\frac{2}{1+\xi_{q,k}}=\sum_{l=0}^{q-1}(-1)^l \xi_{q,k}^l=R_q(\xi_{q,j}),$$
where $\ds R_q(z)=\sum_{l=0}^{q-1}(-1)^l z^l$. The correlation coefficients are the autocorrelation of the binary sequence
$((-1)^l)$. It can be also obtained by applying \eqref{four2} to the Dirichlet kernel.} 
$$\sum_{k=0}^{q-1}\frac{1}{\big|1+\xi_{q,k}\big|^4}=\frac{1}{16}\Big(\frac{1}{3}q^4+\frac{2}{3}q^2\Big), \eqno(HJBr)$$
we see that 
$$\frac{16}{q^4}\sum_{k=0}^{q-1}\frac{1}{\big|1+\xi_{q,k}\big|^4}=\frac{1}{3}+\frac{2}{3q^2}.$$
That is,
\begin{eqnarray}\label{estimI}
 \frac{1}{q}\sum_{k=0}^{q-1}\frac{16}{q^4}\frac{\big(D_q(1)\big)^4}{\big|1+\xi_{q,k}\big|^4}=\frac{1}{3}q+\frac{2}{3q}.
\end{eqnarray}
Applying again Lemma \ref{classical} combinded with our assumption we infer that the sequence 
$\Big(\frac{1}{q}\sum_{k=0}^{q-1}|P_q(-\xi_{q,k})|^4\Big)$ is bounded.
%for $q$ large enough 
%$$\frac{1}{2q}\sum_{k=0}^{q-1}|P_q(-\xi_{q,k})|^4 \leq \frac{1+\epsilon}{2}.$$
We thus have the following estimation
\begin{eqnarray}\label{estimII}
 \frac{1}{q}\sum_{k=0}^{q-1}\frac{16}{q^4}\frac{\big(D_q(1)\big)^4}{\big|1+\xi_{q,k}\big|^4}
 \sim \frac{1}{q} \sum_{k=0}^{q-1}|Q_q(-\xi_{q,k})|^4,
\end{eqnarray}
which gives 
\begin{eqnarray}\label{estimIII}
 \frac{1}{q} \sum_{k=0}^{q-1}|Q_q(-\xi_{q,k})|^4 \sim  \frac{1}{3}q+\frac{2}{3q},
\end{eqnarray}
and this finish the proof of the claim.\\

\noindent We deduce that the autocorrelation coefficients $(c_l)$ of $H$ satisfy
$$\sum_{l\neq 0}c_l^2 \sim \frac{2}{3}\frac{q^3}{16}+\frac{1}{3}\frac{q}{16}.$$
In other words
$$\sum_{l\neq 0}|H \cap (H-l)|^2 \sim \frac{2}{3}\frac{q^3}{16}+\frac{1}{3}\frac{q}{16},$$
since 
$$c_l=|H \cap (H-l)|=\1_H * \1_H(-l),$$
where $*$ is the standard convolution operation.\\

\noindent We further observe that the quantity $\sum_{l\neq 0}^{q-1}|H \cap (H-l)|^2$ is the number of the solution of
the equation $a-b=c-d$, $a,b,c, d \in H$.\\
In the same manner we can see that the autocorrelation coefficients $(c'_l)$ of $H^c$ satisfy
$$\sum_{l\neq 0}{c'_l}^2 \sim \frac{2}{3}\frac{q^3}{16}+\frac{1}{3}\frac{q}{16},$$
where $H^c$ is the complement of $H$. It follows that for large enough $q$  
$$\|R_q\|_4 \sim \|Q_q\|_4,$$
where  $\ds R_q(z)=\frac{2}{\sqrt{q}}\sum_{j \in H^c}z^j.$ Hence, by a variant of Balog-Szemer\'edi's theorem, 
we can describe approximately the structure of $H$ \cite{Gowers}. As a consequence, we obtain a contradiction.  
But, we can also proceed directly. Indeed, for each $k=0,\cdots,q-1$,
\begin{eqnarray}\label{key-id6}
|1+\xi_{q,k}|\big|Q_q(-\xi_{q,k})\big| = \Big|\frac{2}{\sqrt{q}}+(1+\xi_{q,k})P_q(-\xi_{q,k})\Big|,
\end{eqnarray} 
by \eqref{rootf7}. Therefore, by the triangle inequality, we can assert
\begin{eqnarray}\label{residu:eq1}
 \quad \quad\frac{1}{2q}\sum_{k=0}^{q-1}|1+\xi_{q,k}|^4\big|Q_q(-\xi_{q,k})\big|^4
\leq \psi(q)+\frac{1}{2q}\sum_{k=0}^{q-1}|1+\xi_{q,k}|^4\big|P_q(-\xi_{q,k})\big|^4,
\end{eqnarray}
where $\psi(q)$ is given by
\begin{eqnarray*}
\psi(q)&=&\frac12 \Big( \Big(\frac{2}{\sqrt{q}}\Big)^4+4 \Big(\frac{2}{\sqrt{q}}\Big)^3 
\Big(\frac{1}{q}\sum_{k=0}^{q-1}|1+\xi_{q,k}|^4\big|P_q(-\xi_{q,k})\big|^4\Big)^{\frac14}\\
&+&
6\Big(\frac{2}{\sqrt{q}}\Big)^2 
\Big(\frac{1}{q}\sum_{k=0}^{q-1}|1+\xi_{q,k}|^4\big|P_q(-\xi_{q,k})\big|^4\Big)^{\frac12}\\
&+& 4\Big(\frac{2}{\sqrt{q}}\Big) 
\Big(\frac{1}{q}\sum_{k=0}^{q-1}|1+\xi_{q,k}|^4\big|P_q(-\xi_{q,k})\big|^4\Big)^{\frac34}. 
\end{eqnarray*}

We further have $\psi(q) \longrightarrow 0$ as $q \longrightarrow +\infty$, by our assumption. Consequently, we have
%$$\frac{\big|Q_q(-\xi_{q,\frac{q+1}{2}})\big|^4}{16q^2} \tend{q}{+\infty}1,$$
%in an other words,
%$$\big|Q_q(-\xi_{q,\frac{q+1}{2q}})\big|\sim \big|Q_q(1)\big|.$$
\begin{eqnarray}\label{residu:f2}
 \frac{1}{2q}\sum_{k=0}^{q-1}|1+\xi_{q,k}|^4\big|Q_q(-\xi_{q,k})\big|^4+
\frac{1}{2q}\sum_{k=0}^{q-1}|1-\xi_{q,k}|^4\big|Q_q(\xi_{q,k})\big|^4 \nonumber\\
\quad \quad \quad \leq\psi(q)+
\frac{1}{2q}\sum_{k=0}^{q-1}|1+\xi_{q,k}|^4\big|P_q(-\xi_{q,k})\big|^4+
\frac{1}{2q}\sum_{k=0}^{q-1}|1-\xi_{q,k}|^4\big|Q_q(\xi_{q,k})\big|^4 
\end{eqnarray}
Combining Lemma \ref{classical} with \eqref{rootof1} we get 

$$\int \big|1-z\big|^4 \big|Q_q\big|^4 dz \leq \psi(q)+\int \big|1-z\big|^4 \big|P_q\big|^4 dz.$$
Whence
\begin{eqnarray}\label{key-majoration}
\int \big|1-z\big|^4 \big|Q_q\big|^4 dz \leq \psi(q)+16\int  \big|P_q\big|^4 dz. 
\end{eqnarray}

Notice here that, without loss of generality, we can suppose $q$ is an even integer. Otherwise, by appealing 
to \eqref{Littlewood3}  we can write 
$$(1-z)D_q(z)+(1-z)P_q(z)=(1-z)Q_q(z),$$
Hence 
\begin{eqnarray}\label{droop1}
\frac{1-z^q}{\sqrt{q}}+(1-z)P_q(z)=(1-z)Q_q(z). 
\end{eqnarray}

Applying the same arguments as before, we see that \eqref{key-majoration} holds with a suitable function $\psi(q)$.\\

We need now to estimate the $L^4$-norm of $(1-z)Q_q(z)$. Write
$$(1-z)Q_q(z)=\frac{2}{\sqrt{q}}\Big(\eta_0+\sum_{j=1}^{q-1}\big(\eta_j-\eta_{j-1}\big) z^j-\eta_{j-1}z^q\Big),$$
and put
$$\tilde{Q_q}(z)=\frac{2}{\sqrt{q}}\Big(\sum_{j=1}^{q-1}\big(\eta_j-\eta_{j-1}\big) z^j\Big).$$
We thus need to estimate only the $L^4$-norm of $\tilde{Q_q}$.\\

Observe that the coefficients of the analytic polynomial 
$\frac{\sqrt{q}}{2}\tilde{Q_q}(z)$ are in $\{0,\pm 1\}$. Furthermore, without loss of generality, we can assume that 
the frequencies of $0$, $1$ and $-1$ exists. For  $a \in \{0,\pm 1\}$,  we denotes by $f_a$ the frequency of $a$.\\

Therefore, as in the proof of Proposition \ref{JJH}, if $f_0<1$ and $f_1 \neq f_{-1}$. Then 
$$\Big\|\tilde{Q_q}(z)\Big\|_4^4 \tend{q}{+\infty}+\infty. \eqno{(GJJH)}$$ Indeed, employing Marcinkiewicz-Zygmund inequalities, we see that
$$\Big\|\tilde{Q_q}(z)\Big\|_4^4 \geq \frac{A_4}{q} \tilde{Q_q}(1)^4.$$
Moreover
$$\tilde{Q_q}(1)=\frac{4}{\sqrt{q}}\#\{j \in \Lambda_0^c :\delta_j=1\}-\frac{2\#\Lambda_0^c }{\sqrt{q}},$$
where $\delta_j=\eta_j-\eta_{j-1}$, $j=1,\cdots, q-1$ and $\Lambda_0^c$ is the complement of the set 
$\big\{j~~:~~\delta_j=0\big\}$.  We thus get for $q$ large enough,
$$\tilde{Q_q}(1) \sim 2 \sqrt{q}\Big(f_1-f_{-1}\Big).$$
Consequently,
$$\Big\|\tilde{Q_q}(z)\Big\|_4^4 \gtrsim  A_4 16 q \Big(f_1-f_{-1}\Big)^4.$$
Letting $q \longrightarrow +\infty$, we obviously obtain $(GJJH)$. We thus need to examine only the case $f_1=f_{-1}$. For that, 
in the same spirit as before,  we complete the proof as follows.\\

%\noindent Furthermore, since the Fourier coefficients of $|Q(z|^4$ are non-negative, for large enough $q$, we have
%$$\int \big|1-z\big|^4 \big|Q_q\big|^4 dz \gtrsim 2 \int \big|Q_q\big|^4 dz,$$
%as is easy to check from \eqref{key-4}.
%Letting $q \longrightarrow +\infty$, we obviously obtain a contradiction. It follows 
%that the sequence $(P_q)$ can not be ultraflat. For the more general case, we obtain 

Employing Marcinkiewicz-Zygmund inequalities we can assert that 
$$\int \big|1-z\big|^4 \big|Q_q(z)\big|^4 dz \geq \frac{2^4.A_4}{q}Q_q(-1)^4.$$
Therefore if the frequencies of odd integers and even integers in $H$ are not balanced then 
$$\int \big|1-z\big|^4 \big|Q_q(z)\big|^4 dz \tend{q}{+\infty} +\infty.$$
We thus get a contradiction. Otherwise the frequencies are balanced and this also yields a contradiction. Indeed, let $r \geq 2$ be an integer,
and observe once again that we can assume that $r$ divides $q$. Hence again by Marcinkiewicz-Zygmund inequalities, we see that

\begin{eqnarray*}
\Big\|(1-z)Q_q(z)\Big\|_4^4 &\geq& \frac{A_4}{q}\big|1-\xi_{q,\frac{q\ell}{r}}\big|^4\big|Q\big(\xi_{q,\frac{q\ell}{r}}\big) \big|^4\\
&\geq& \frac{A_4}{q}\big|1-\xi_{r,\ell}\big|^4\big|Q\big(\xi_{r,\ell}\big) \big|^4\\
&\geq&
A_42^4 r^4\big|1-\xi_{r,\ell}\big|^4\Big|DF_r\Big(\frac{1}{q}\1_{H \cap [0,q-1]}\Big)(\ell)\Big|^4 q.
\end{eqnarray*}
It follows that if for some $r$ and $\ell \neq 0$, we have 
$$\lim_{q \longrightarrow +\infty} \Big|DF_r\Big(\frac{1}{q}\1_{H \cap [0,q-1]}\Big)(\ell)\Big|>0.$$
Then
$$\int \big|1-z\big|^4 \big|Q_q(z)\big|^4 dz \tend{q}{+\infty}+\infty,$$
which gives a contradiction. Otherwise $H$ mod $r$ is almost equipped with the uniform probability measure of $\big\{0,\cdots,r-1\big\}$,
which also yields a contradiction.\\
Summarizing, we conclude that $(P_q)$ can not be square $L^2$-flat and the proof of the first main result is complete.
\end{proof}

\section{Square flatness implies pairwise independence vs orthogonality.}\label{dynamical}

In this section  we will give an alternative proof of Theorem \ref{mainL1} and we will present the proof of the following theorem 

\begin{Th}\label{Lmain2} If $(\epsilon_j)$ generated a sequence of square $L^2$-flat polynomials then the canonical projections of 
the dynamical system generated by $(\epsilon_j)$ are pairwise independent. 
\end{Th}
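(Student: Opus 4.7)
The plan is to pass to the symbolic dynamical system $(X_\epsilon, S, \mu)$ naturally attached to $(\epsilon_j)$: let $X_\epsilon \subset \{-1,+1\}^{\mathbb{Z}}$ be the shift-orbit closure of $\epsilon$ (after the usual bi-infinite extension), $S$ the shift, and $\mu$ any weak-$*$ accumulation point of the empirical measures $\nu_q = \frac{1}{q}\sum_{j=0}^{q-1} \delta_{S^j \epsilon}$. Write $\pi_n(x) = x_n$ for the coordinate projection. I must show that under $\mu$ the variables $\pi_0$ and $\pi_h$ are independent for each $h \geq 1$. The guiding idea is that since each $\pi_n$ takes only the two values $\pm 1$, the joint law of $(\pi_0,\pi_h)$ is determined by four moments; it therefore suffices to prove $\int \pi_0 d\mu = \int \pi_h d\mu = 0$ together with $\int \pi_0 \pi_h d\mu = 0$.

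Next I translate square $L^2$-flatness into vanishing autocorrelations. By Proposition \ref{Cri1}, $\|P_q\|_4 \to 1$, and a direct expansion of $|P_q|^2$ in the basis $\{z^h\}$ yields
$$\int_{S^1} |P_q(z)|^4 dz = 1 + 2 \sum_{h=1}^{q-1} c_h(q)^2, \qquad c_h(q) \egdef \frac{1}{q} \sum_{j=0}^{q-1-h} \epsilon_j \epsilon_{j+h}.$$
Hence $\sum_{h \geq 1} c_h(q)^2 \to 0$, so $c_h(q) \to 0$ for every fixed $h \geq 1$. Moreover, Proposition \ref{JJH} forces the frequency of $-1$ in $(\epsilon_j)$ to equal $\frac{1}{2}$ (otherwise $\|P_q\|_4 \to \infty$, contradicting flatness), giving $\frac{1}{q} \sum_{j=0}^{q-1} \epsilon_j \to 0$. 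Since $\pi_0$ and $\pi_0 \cdot \pi_h$ are continuous cylinder functions on $X_\epsilon$, weak-$*$ convergence $\nu_{q_k} \to \mu$ along the chosen subsequence yields
$$\int \pi_0 d\mu = \lim_k \frac{1}{q_k}\sum_{j=0}^{q_k-1} \epsilon_j = 0, \qquad \int \pi_0 \pi_h d\mu = \lim_k c_h(q_k) = 0,$$
and $S$-invariance of $\mu$ gives $\int \pi_h d\mu = 0$ as well.

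The conclusion is then a short linear-algebra step. For $\sigma, \tau \in \{-1,+1\}$ set $p_{\sigma\tau} = \mu(\pi_0 = \sigma,\, \pi_h = \tau)$. The four conditions $\sum_{\sigma,\tau} p_{\sigma\tau} = 1$, $\sum_{\sigma,\tau} \sigma\, p_{\sigma\tau} = 0$, $\sum_{\sigma,\tau} \tau\, p_{\sigma\tau} = 0$, $\sum_{\sigma,\tau} \sigma\tau\, p_{\sigma\tau} = 0$ form a nonsingular $4\times 4$ linear system whose unique solution is $p_{\sigma\tau} = \frac{1}{4}$, which is precisely the independence of $\pi_0$ and $\pi_h$ under $\mu$. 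The main obstacle I foresee is interpretive rather than computational: one has to pin down which invariant measure the statement refers to and check that the three relevant moments are genuinely determined by the orbit data of $\epsilon$ (any weak-$*$ accumulation point of $\nu_q$ works, since the limits above depend only on $(\epsilon_j)$, not on the subsequence); and one must remember that pairwise independence requires only pointwise-in-$h$ vanishing of $c_h(q)$, which a diagonal extraction delivers without needing any uniformity in $h$.
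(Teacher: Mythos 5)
Your argument is correct, and its overall architecture (pass to the shift‑orbit closure, take a weak-$*$ limit of empirical measures, reduce pairwise independence of $\pm1$‑valued coordinates to the vanishing of the first and second mixed moments) matches the paper's dynamical framework. Where you genuinely diverge is in how the key input $\frac1q\sum_j\epsilon_j\epsilon_{j+h}\to 0$ is extracted from flatness. The paper gets it through a chain of estimates built on the identity $(1-z^{\ell})Q_q(z)=(1-z^{\ell})P_q(z)+\sqrt{\ell}\,D_{\ell}(z)(1-z^q)/\sqrt{q}$ (Lemma \ref{estimationI}) and the resulting convergence $\bigl\|\,\bigl|\tfrac1{\sqrt q}\sum_{j}(\epsilon_j-\epsilon_{j-\ell})z^j\bigr|^2-|1-z^{\ell}|^2\,\bigr\|_1\to 0$ (Proposition \ref{KeyProp2}), from which $\frac1q\sum_j(\epsilon_j-\epsilon_{j-\ell})^2\to 2$ and hence the correlation vanishes. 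You instead use the one‑line Parseval identity
\begin{equation*}
\int_{S^1}|P_q(z)|^4\,dz=1+2\sum_{h=1}^{q-1}c_h(q)^2,
\end{equation*}
which together with Proposition \ref{Cri1} gives $\sum_h c_h(q)^2\to0$ and a fortiori $c_h(q)\to0$ for each fixed $h$; this is exactly the content of \eqref{correlation2} and \eqref{identity2} in the paper, but the paper does not deploy it this way in the proof of Theorem \ref{Lmain2}. Your route is shorter and even yields the stronger $\ell^2$‑summability of the correlations (equivalently, Theorem \ref{KeyTheorem} on the Lebesgue spectral measure) for free; the paper's route via $(1-z^{\ell})$ is heavier here but is reused later for the quantitative Propositions \ref{KeyProp} and \ref{KeyProp3}, which your computation does not replace. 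Two minor points you handle correctly but should keep explicit: the discrepancy between $c_h(q)$ and $\int\pi_0\pi_h\,d\nu_q$ is only $O(h/q)$ boundary terms, and the zero‑mean statement comes from Marcinkiewicz--Zygmund at $z=1$ (the proof of Proposition \ref{JJH}) without assuming the frequency limit exists a priori.
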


We start by presenting a simple proof of Proposition \ref{JJH}.
\begin{proof}[\textbf{Proof of Proposition \ref{JJH}}]By Marcinkiewicz-Zygmund inequalities,
$$\big\|P_q\big\|_4 \geq \frac{A_4}{q} \big|P_q(1)\big|^4 =A_q q \Big|\frac{1}{q}\sum_{j: \epsilon_j=1}-\frac{1}{q}\sum_{j: \epsilon_j=-1}\Big|^4,$$
but, since $(\big\|P_q\big\|_4)_{q \geq 1}$ is boubded, this forces 
$$\Big|\frac{1}{q}\sum_{j: \epsilon_j=1}-\frac{1}{q}\sum_{j: \epsilon_j=-1}\Big| \tend{q}{+\infty}0.$$
Hence the frequencies of $1$ and $-1$ is $\frac12$. 
\end{proof}

It follows that the square flatness implies that the frequencies of $1$ and $-1$ are balanced, that is, the density of $H$ is $\frac{1}{2}$.
We further have that $H$ possesses a certain arithmetical properties. Indeed, we have the following.
\begin{Prop}\label{KeyProp} Let $\ell \geq 1$, then the density of the set $\big(H \cap \big(H+\ell\big)\big)$ is $\frac14.$  
\end{Prop}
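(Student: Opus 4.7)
The plan is to derive pointwise vanishing of the aperiodic autocorrelations of $(\epsilon_j)$ from square $L^2$-flatness via Parseval, and then convert this into the desired density count through the change of variables $\eta_j = \tfrac12(\epsilon_j + 1)$.

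First, by Proposition \ref{Cri1} the hypothesis of square $L^2$-flatness is equivalent to $\|P_q\|_4 \to 1$. Viewing $|P_q(z)|^2 = P_q(z)\overline{P_q(z)}$ as a Laurent polynomial on $S^1$, one reads off that its coefficient at $z^\ell$ for $0 < |\ell| \leq q-1$ is $c_\ell/q$, where $c_\ell = \sum_{j=0}^{q-\ell-1}\epsilon_j\epsilon_{j+\ell}$, while the constant term is $1$. Parseval then gives
\begin{eqnarray*}
\|P_q\|_4^4 - 1 \;=\; \big\||P_q|^2 - 1\big\|_2^2 \;=\; \frac{2}{q^2}\sum_{\ell=1}^{q-1} c_\ell^2.
\end{eqnarray*}
The left-hand side tends to $0$, so $\sum_{\ell=1}^{q-1}c_\ell^2 = o(q^2)$, and for every fixed $\ell \geq 1$ the crude single-term bound $(c_\ell/q)^2 \leq \tfrac12\big(\|P_q\|_4^4 - 1\big)$ yields $c_\ell/q \to 0$.

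Second, I translate back to the set $H$ by expanding $\eta_j\eta_{j+\ell} = \tfrac14\big(\epsilon_j\epsilon_{j+\ell} + \epsilon_j + \epsilon_{j+\ell} + 1\big)$ and summing over $0 \leq j \leq q-\ell-1$, which gives
\begin{eqnarray*}
\big| H \cap (H+\ell) \cap [0, q-1]\big| \;=\; \tfrac14\big(c_\ell + S_1(q) + S_2(q) + (q-\ell)\big),
\end{eqnarray*}
where $S_1(q), S_2(q)$ are partial sums of $(\epsilon_j)$ over intervals of length $q-\ell$ that differ from $\sum_{j=0}^{q-1}\epsilon_j$ by at most $\ell$ terms. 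By Proposition \ref{JJH} the frequency of $+1$'s equals $1/2$, so $S_i(q)/q \to 0$; combined with $c_\ell/q \to 0$ from the first step, the displayed identity divided by $q$ tends to $1/4$.

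The only delicate point is the first step: an $\ell^2$-control of the form $\sum_k c_k^2 = o(q^2)$ is in general strictly weaker than pointwise vanishing of each $c_\ell/q$, but here it suffices precisely because the hypothesis kills the \emph{whole} sum of squares and not merely a tail, so that the brutal domination $c_\ell^2 \leq \sum_k c_k^2$ transfers the bound to any fixed lag. Once this is achieved, the remainder of the argument is a routine counting via the affine identification $\eta_j = \tfrac12(\epsilon_j+1)$ together with the already-established balanced frequency of $H$.
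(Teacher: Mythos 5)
Your argument is correct, and it reaches the conclusion by a genuinely more direct route than the paper. The paper's own proof works with the Newman--Bourgain polynomial $Q_q$: it first establishes (Proposition \ref{KeyProp3}, via Lemma \ref{estimationI} and the Dirichlet-kernel comparison between $(1-z^\ell)Q_q$ and $(1-z^\ell)P_q$) that $\int \big||1-z^\ell|^2|Q_q|^2-|1-z^\ell|^2\big|\,dz\to 0$, then identifies $\int|1-z^\ell|^2|Q_q|^2\,dz$ with $\tfrac{4}{q}\big|\big(H\Delta(H+\ell)\big)\cap[0,q-1]\big|$, concludes $d\big(H\Delta(H+\ell)\big)=\tfrac12$, and finishes by inclusion--exclusion using $d(H)=\tfrac12$. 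You instead stay entirely with $P_q$ and read the vanishing of each fixed-lag autocorrelation $c_\ell/q\to 0$ straight off the Parseval identity $\|P_q\|_4^4-1=\tfrac{2}{q^2}\sum_k c_k^2$, which is exactly \eqref{identity2} combined with the expansion \eqref{correlation2}; your remark that the single-term domination $c_\ell^2\le\sum_k c_k^2$ suffices (and in fact gives the decay uniformly in $\ell$) is precisely what the paper obtains later, and at greater length, in the ``from flatness to pairwise independence'' discussion of Section \ref{dynamical}. Both proofs then rest on the same second input, the balanced-frequency consequence of Proposition \ref{JJH}, which gives $\tfrac1q\sum_{j<q}\epsilon_j\to 0$ and kills your boundary sums $S_i(q)/q$. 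What your route buys is economy: it bypasses Lemma \ref{estimationI} and Proposition \ref{KeyProp3} entirely and makes transparent that the only analytic content is $\|P_q\|_4\to 1$. One cosmetic point: $\sum_{j=0}^{q-\ell-1}\eta_j\eta_{j+\ell}$ counts $H\cap(H+\ell)\cap[\ell,q-1]$, but since $H+\ell$ has no elements below $\ell$ this coincides with the set you wrote, so nothing is lost. (Both your proof and the paper's implicitly assume the relevant densities exist as limits, or else interpret the statement along the subsequence of degrees $q$; this caveat is common to the two arguments.)
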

For the proof of Proposition \ref{KeyProp}, we  need some tools. We start by proving the following lemma.
\begin{lem}\label{estimationI} For any $\ell \geq 1$, we have 
$$(1-z^{\ell})Q_q(z)= (1-z^{\ell})P_q(z)+\frac{\ds \big(\sqrt{\ell}D_{\ell}(z)\big) (1-z^q)}{\sqrt{q}}.$$ 
\end{lem}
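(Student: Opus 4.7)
The identity is essentially algebraic; it follows from the fundamental relation between $P_q$, $Q_q$, and the Dirichlet kernel $D_q$.

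The plan is to start from the decomposition \eqref{Littlewood3}, namely $P_q(z) = Q_q(z) - D_q(z)$, which rearranges to
\[
Q_q(z) - P_q(z) \;=\; D_q(z) \;=\; \frac{1}{\sqrt{q}}\sum_{i=0}^{q-1} z^i.
\]
Multiplying both sides by the factor $(1-z^{\ell})$ reduces the claim to showing
\[
(1-z^{\ell})\,D_q(z) \;=\; \frac{\sqrt{\ell}\,D_{\ell}(z)\,(1-z^q)}{\sqrt{q}}.
\]

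For this, I would use the closed form $D_q(z) = \frac{1}{\sqrt{q}}\cdot\frac{1-z^q}{1-z}$ (valid for $z \neq 1$) together with the analogous formula $\sqrt{\ell}\,D_{\ell}(z) = \frac{1-z^{\ell}}{1-z}$. Substituting yields
\[
(1-z^{\ell})\,D_q(z) \;=\; \frac{(1-z^{\ell})(1-z^q)}{\sqrt{q}\,(1-z)} \;=\; \frac{1}{\sqrt{q}}\Bigl(\tfrac{1-z^{\ell}}{1-z}\Bigr)(1-z^q) \;=\; \frac{\sqrt{\ell}\,D_{\ell}(z)\,(1-z^q)}{\sqrt{q}},
\]
which is exactly the desired right-hand side. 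Combining with the previous display gives the lemma for all $z \in S^1 \setminus \{1\}$, and since both sides are polynomials in $z$, the identity extends to $z=1$ by continuity.

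There is no real obstacle here — the only thing to watch is the precise bookkeeping of the $\frac{1}{\sqrt{q}}$ and $\sqrt{\ell}$ normalizations, since $D_q$ is the $L^2$-normalized Dirichlet kernel rather than the unnormalized one $\sum_{i=0}^{q-1}z^i$. Once these factors are tracked correctly, the identity $\frac{(1-z^{\ell})(1-z^q)}{1-z} = \bigl(\sum_{i=0}^{\ell-1}z^i\bigr)(1-z^q)$ does all the work.
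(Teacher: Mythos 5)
Your proof is correct and follows essentially the same route as the paper: both reduce the claim via $Q_q-P_q=D_q$ to the factorization $(1-z^{\ell})\sum_{j=0}^{q-1}z^j=\bigl(\sum_{j=0}^{\ell-1}z^j\bigr)(1-z^q)$, the paper obtaining it by telescoping the sums directly while you pass through the closed form $\frac{1-z^q}{1-z}$ for $z\neq 1$ and extend to $z=1$ by polynomial identity. The normalizations $\sqrt{\ell}D_{\ell}(z)=\sum_{i=0}^{\ell-1}z^i$ and $\sqrt{q}D_q(z)=\sum_{i=0}^{q-1}z^i$ are tracked correctly, so there is nothing to fix.
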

\begin{proof}By (2.4), we have
$$(1-z^{\ell})Q_q(z)=(1-z^{\ell})P_q(z)+(1-z^{\ell})D_q(z).$$
We thus need only to compute the second term of the right-hand of this equation. This can be easily accomplished by a 
straightforward computation as follows
\begin{eqnarray*}
 (1-z^{\ell}) \sqrt{q}D_q(z)&=& (1-z^{\ell}) \Big(\sum_{j=0}^{q-1}z^j\Big)\\
 &=& \sum_{j=0}^{q-1}z^j-\sum_{j=\ell}^{q+\ell-1}z^j\\
 &=& \sum_{j=0}^{\ell-1}z^j-z^q \big(\sum_{j=0}^{\ell-1}z^j\big)\\
 &=& \big(\sum_{j=0}^{\ell-1}z^j\big) \big(1-z^q\big)\\
 %&=& \big(\sqrt{\ell}D_{\ell}(z)\big) (1-z^q).
\end{eqnarray*}
we thereby get 
$$(1-z^{\ell}) \sqrt{q}D_q(z)=\big(\sqrt{\ell}D_{\ell}(z)\big) (1-z^q),$$
 and the lemma follows.
\end{proof}
Lemma \ref{estimationI} and the inequality \eqref{key-majoration} can be improved by proving the following Proposition.
\begin{Prop}Let $\alpha$ be  in $]1,4[$. Then there exist a constant $K_\alpha$ such that for any $q$,
$$\int \big|1-z^{\ell_q}\big|^\alpha \big|Q_q(z)\big|^\alpha dz \leq K_\alpha,  \leqno(BGHJJ)$$
where $\ell_q \leq q^{\frac{\alpha}{2(\alpha-1)}}.$
\end{Prop}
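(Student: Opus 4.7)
The plan is to apply Lemma \ref{estimationI} to split $(1-z^{\ell_q})Q_q$ into two pieces, use the triangle inequality in $L^\alpha(S^1,dz)$, and bound each piece uniformly in $q$. Concretely, Lemma \ref{estimationI} yields
$$\bigl\|(1-z^{\ell_q})\, Q_q\bigr\|_\alpha \;\leq\; \bigl\|(1-z^{\ell_q})\, P_q\bigr\|_\alpha \;+\; \frac{\sqrt{\ell_q}}{\sqrt{q}}\,\bigl\|(1-z^q)\, D_{\ell_q}\bigr\|_\alpha,$$
so the task reduces to bounding the two summands on the right uniformly in $q$.

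For the $P_q$-term, the trivial pointwise bound $|1-z^{\ell_q}|\leq 2$ reduces matters to controlling $\|P_q\|_\alpha$. For $\alpha\in(1,2]$ this is automatic since $\|P_q\|_\alpha \leq \|P_q\|_2 = 1$; for $\alpha\in(2,4)$ it follows from the standing hypothesis (towards contradiction in the proof of Theorem \ref{mainL1}) that $(P_q)$ is square $L^2$-flat: by Proposition \ref{Cri1} this forces $\|P_q\|_4\to 1$, and hence $\|P_q\|_\alpha$ is uniformly bounded by H\"older's inequality.

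The heart of the argument is the Dirichlet term. Here I would invoke the classical estimate
$$\|D_\ell\|_\alpha \;\leq\; C_\alpha\, \ell^{\tfrac12 - \tfrac1\alpha}, \qquad \alpha > 1,$$
which I would derive from the closed form $\sqrt{\ell}\, D_\ell(e^{i\theta}) = \sin(\ell\theta/2)/\sin(\theta/2)$, the pointwise majoration $|\sqrt{\ell}\, D_\ell(e^{i\theta})|\leq \min(\ell, \pi/|\theta|)$, and a standard splitting of the integral at $|\theta|=1/\ell$. Coupled with $|1-z^q|\leq 2$, this gives
$$\frac{\sqrt{\ell_q}}{\sqrt{q}}\bigl\|(1-z^q)\, D_{\ell_q}\bigr\|_\alpha \;\leq\; \frac{2 C_\alpha}{\sqrt{q}}\,\ell_q^{\,1-1/\alpha}.$$

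The final observation is that the hypothesis $\ell_q \leq q^{\alpha/(2(\alpha-1))}$ is calibrated exactly so that $\ell_q^{1-1/\alpha}\leq \sqrt{q}$, making the Dirichlet contribution bounded by $2C_\alpha$. Assembling both estimates yields $(BGHJJ)$ with $K_\alpha$ depending only on $\alpha$ (and on the uniform bound on $\|P_q\|_\alpha$). The main obstacle is not conceptual but technical: producing the Dirichlet kernel estimate with a clean $\alpha$-dependent constant, and keeping track of the threshold $\alpha=2$ where the bound on $\|P_q\|_\alpha$ switches from unconditional to conditional on the flatness assumption. Beyond that, the argument is a mechanical combination of Lemma \ref{estimationI} and the triangle inequality.
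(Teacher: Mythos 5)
Your proposal is correct and follows essentially the same route as the paper: the same splitting via Lemma \ref{estimationI} and the triangle inequality, the same bound $\|(1-z^{\ell_q})P_q\|_\alpha\le 2\|P_q\|_4$ (uniformly bounded under the standing flatness hypothesis), and the same Dirichlet-kernel estimate $\|\sqrt{\ell}D_\ell\|_\alpha\lesssim_\alpha \ell^{1-1/\alpha}$, which the paper simply cites from Anderson et al.\ while you rederive it elementarily from the $\min(\ell,\pi/|\theta|)$ majorant. The exponent bookkeeping matching $\ell_q\le q^{\alpha/(2(\alpha-1))}$ to $\ell_q^{1-1/\alpha}\le\sqrt q$ is exactly the paper's calculation.
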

\begin{proof} We start by noticing that we need to prove only that $(BGHJJ)$ holds for $q$ large enough. 
Applying the triangle inequalities combined with Lemma \ref{estimationI}, we  see that 
\begin{eqnarray*}
 \Big\|\big(1-z^{\ell_q}\big) Q(z) \Big\|_{\alpha}  &\leq&  \Big\| (1-z^{\ell_q})P_q(z)\Big\|_{\alpha}+ 
 \Big\|\frac{\ds \big(\sqrt{\ell}D_{\ell_q}(z)\big) (1-z^q)}{\sqrt{q}}\Big\|_{\alpha}\\  
 &\leq& 2 \Big\|P_q(z)\Big\|_{\alpha}+\frac2{\sqrt{q}} \Big\|\sqrt{\ell_q}D_{\ell_q}(z)\Big\|_{\alpha}\\
 & \leq & 2 \Big\|P_q(z)\Big\|_{4}+\frac2{\sqrt{q}} \Big\|\sqrt{\ell_q}D_{\ell_q}(z)\Big\|_{\alpha}.
\end{eqnarray*}
 But, the sequence $\Big(\big\|P_q(z)\big\|_{4}\Big)_{q \geq 1}$ is bounded by our assumption. Henceforth, 
 we need to estimate only the second term in the  right hand side. To this end, 
  by the estimation obtained in \cite{Anderson} (see Remark below), we have 
 $$\Big\|\sqrt{\ell_q}D_{\ell_q}(z)\Big\|_{\alpha}^{\alpha} \sim c_\alpha \ell_q^{\alpha-1} \leq c_\alpha
 q^{\frac{\alpha}{2}}.$$
 We thus get 
 $${\frac2{\sqrt{q}}} \Big\| \sqrt{\ell_q}D_{\ell_q}(z) \Big\|_{\alpha} \leq 2c_{\alpha},$$
 and the proof of the proposition is complete.
\end{proof} 

By applying carefuly Lemma \ref{estimationI} we get
\begin{Prop}\label{KeyProp2}
\begin{math} \ds
 \int \Big|\big|1-z^{\ell_q}\big|^2 \big|Q(z)\big|^2-\big|1-z^{\ell_q}\big|^2\Big| dz \tend{q}{+\infty}0, 
\end{math}
where $\ell_q \leqslant q^{1-\delta}$, $\delta>0.$ 
\end{Prop}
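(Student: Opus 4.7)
The plan is to exploit the decomposition $Q_q=P_q+D_q$ given by \eqref{Littlewood3} and to show that the weight $|1-z^{\ell_q}|^2$ kills the Dirichlet contribution because it vanishes precisely where $|D_q|$ concentrates. Write
\begin{equation*}
|Q_q(z)|^2-1=\bigl(|P_q(z)|^2-1\bigr)+2\operatorname{Re}\bigl(P_q(z)\,\overline{D_q(z)}\bigr)+|D_q(z)|^2,
\end{equation*}
so that by the triangle inequality the integral to be controlled is bounded by $I_1+I_2+I_3$, where
\begin{align*}
I_1 &= \int |1-z^{\ell_q}|^2 \bigl||P_q(z)|^2-1\bigr|\,dz,\\
I_2 &= 2\int |1-z^{\ell_q}|^2 |P_q(z)|\,|D_q(z)|\,dz,\\
I_3 &= \int |1-z^{\ell_q}|^2 |D_q(z)|^2\,dz.
\end{align*}

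First I would estimate $I_3$ by a direct Fourier computation. Using \eqref{Dirichlet:1} and $|1-z^{\ell_q}|^2=2-z^{\ell_q}-z^{-\ell_q}$, the only surviving Fourier coefficients are those at $\pm \ell_q$, yielding the clean identity $I_3=2\ell_q/q$. Under the hypothesis $\ell_q\le q^{1-\delta}$ this gives $I_3\le 2q^{-\delta}\to 0$. For $I_1$, the weight is uniformly bounded by $4$, so $I_1\le 4\int\bigl||P_q|^2-1\bigr|\,dz\le 4\,\bigl\||P_q|^2-1\bigr\|_2$, which tends to $0$ by the square $L^2$-flatness hypothesis (Proposition \ref{Cri1}). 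Finally, $I_2$ is handled by Cauchy--Schwarz:
\begin{equation*}
I_2\le 2\Bigl(\int |1-z^{\ell_q}|^2 |P_q|^2\,dz\Bigr)^{1/2}\Bigl(\int |1-z^{\ell_q}|^2 |D_q|^2\,dz\Bigr)^{1/2}\le 2\sqrt{4}\cdot\sqrt{2\ell_q/q},
\end{equation*}
since $\|P_q\|_2=1$ and $|1-z^{\ell_q}|^2\le 4$, and this also tends to $0$.

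The step I expect to be the only delicate one is the cross term $I_2$: the factor $|D_q|$ is of order $\sqrt{q}$ near $z=1$, so a naïve $L^\infty$ bound on $D_q$ is hopeless. The point is that Cauchy--Schwarz relocates all the $D_q$-mass into the already-small quantity $I_3$, whose smallness is precisely the reflection of the fact that $|1-z^{\ell_q}|^2$ vanishes to order $2$ at the concentration point $z=1$ of $D_q$ and, more importantly, that $\ell_q$ is small compared with $q$. Combining the three estimates yields $I_1+I_2+I_3\to 0$, which is the claim; the hypothesis $\ell_q\le q^{1-\delta}$ enters only through the bound on $I_3$ (and hence on $I_2$). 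Alternatively, one could start from the identity of Lemma \ref{estimationI} and expand $|(1-z^{\ell_q})Q_q|^2$; this leads to exactly the same three pieces, with the Dirichlet contribution appearing in the form $\ell |D_\ell(z)|^2|1-z^q|^2/q$, whose integral is again $2\ell_q/q$.
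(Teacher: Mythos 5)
Your proof is correct and follows essentially the same route as the paper: both split off the Dirichlet contribution via $Q_q=P_q+D_q$, dispose of the $P_q$-part by the square $L^2$-flatness hypothesis, and control the $D_q$-part through the exact smallness $\int|1-z^{\ell_q}|^2|D_q|^2\,dz=2\ell_q/q$ together with Cauchy--Schwarz on the cross term. The only (cosmetic) difference is that the paper factors $|Q_q|^2-|P_q|^2=(|Q_q|-|P_q|)(|Q_q|+|P_q|)$ and bounds $\bigl||Q_q|-|P_q|\bigr|\le|D_q|$, whereas you expand $|P_q+D_q|^2$ into three explicit pieces.
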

\begin{proof}By the triangle inequality, we have
\begin{eqnarray*}
&&\Big\|\big|1-z^{\ell_q}\big|^2 \big|Q(z)\big|^2-\big|1-z^{\ell_q}\big|^2\Big\|_1 \leqslant  \\
&&\Big\|\big|1-z^{\ell_q}\big|^2 \big(\big|Q(z)\big|^2-\big|P_q(z)\big|^2\big)\Big\|_1 
+\Big\|\big|1-z^{\ell_q}\big|^2 \big(\big|P(z)\big|^2-1\big)\big|^2\Big\|_1 
\end{eqnarray*}
We further have
\begin{eqnarray*}
&&\Big\|\big|1-z^{\ell_q}\big|^2 \big(\big|Q_q(z)\big|^2-\big|P_q(z)\big|^2\big)\Big\|_1\\
&=&\Big\|\big|1-z^{\ell_q}\big|^2 \big(\big|Q_q(z)\big|-\big|P_q(z)\big|\big) \big(\big|Q_q(z)\big|+\big|P_q(z)\big|\big)\Big\|_1 \\
&\leqslant&\Big\|\big|1-z^{\ell_q}\big|^2 \big(\big|Q_q(z)-P_q(z)\big|\big) \big(\big|Q_q(z)\big|+\big|P_q(z)\big|\big)\Big\|_1\\
&\leqslant& \Big\|\big|1-z^{\ell_q}\big|^2 \big|D_q(z)\big| \big(\big|Q_q(z)\big|+\big|P_q(z)\big|\big)\Big\|_1
\end{eqnarray*}
It follows, by Cauchy-Schwartz inequality, that 
$$\Big\|\big|1-z^{\ell_q}\big|^2 \big(\big|Q_q(z)\big|^2-\big|P_q(z)\big|^2\big)\Big\|_1
$$
$$
\leqslant  \Big\|\big(1-z^{\ell_q}\big).D_q(z)\Big \|_2 .\Big\|\big(1-z^{\ell_q}\big)\big(\big|Q_q(z)\big|+\big|P_q(z)\big|\big)\Big\|_2.
$$
Moreover, by our assumption,
$$\Big\|\big(1-z^{\ell_q}\big)\big(\big|Q_q(z)\big|+\big|P_q(z)\big|\big)\Big\|_2
\leq 2 \Big(\big\|Q_q\big\|_2+\big\|P_q\big\|_2\Big)  \lesssim 6,$$
and, we have
$$\Big\|\big(1-z^{\ell_q}\big).D_q(z)\Big \|_2 \leqslant 2\sqrt{\frac{\ell_q}{q}} \tend{q}{+\infty}0,$$
by Lemma 2.1. This achieve the proof of the proposition.
\end{proof}
 
 We can strengthen the previous results by proving
\begin{Prop}\label{KeyProp3}Let $\alpha \in ]0,2[$, $\ell \geq 1$ be a integer and $(\ell_q)$ a sequence of integers. Then 
\vskip 0.01cm
 \begin{enumerate}[(i)]
  \item  \begin{math}
  \ds \int \Big||1-z^\ell|^2|Q_q(z)|^2-|1-z^\ell|^2\Big|^2 dz \tend{q}{+\infty}0.
 \end{math}
 \item \begin{math}
  \ds \int \Big||1-z^{\ell_q}||Q_q(z)|-|1-z^{\ell_q}|\Big|^\alpha dz \tend{q}{+\infty}0,
 \end{math}
 \end{enumerate}
\end{Prop}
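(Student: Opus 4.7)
Both parts of the proposition follow from comparing $Q_q$ and $P_q$ through the decomposition $Q_q = P_q + D_q$ from \eqref{Littlewood3}, exploiting the square $L^2$-flatness hypothesis $\int \bigl||P_q|^2-1\bigr|^2 dz \longrightarrow 0$ (which is in force throughout this section) together with the cancellation identity of Lemma \ref{estimationI}, namely $(1-z^\ell)\sqrt{q}\,D_q(z)=\sqrt{\ell}\,D_\ell(z)(1-z^q)$. This identity is the crucial tool that neutralizes the concentration of $D_q$ near $z=1$; without it, the growth $\|Q_q\|_4 \sim \|D_q\|_4 \asymp q^{1/4}$ coming from \eqref{equiv-D} would block any uniform control.

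For part (i), with $\ell$ fixed, the integrand equals $|1-z^\ell|^4(|Q_q|^2-1)^2$, and I would split
\[
(|Q_q|^2-1)^2 \leq 2(|P_q|^2-1)^2 + 2(|Q_q|^2-|P_q|^2)^2.
\]
Weighted by $|1-z^\ell|^4 \leq 16$, the first summand contributes $O\!\bigl(\int(|P_q|^2-1)^2 dz\bigr)\to 0$ by the flatness hypothesis. For the second, the pointwise inequality $\bigl||Q_q|-|P_q|\bigr| \leq |Q_q-P_q| = |D_q|$ gives $(|Q_q|^2-|P_q|^2)^2 \leq 2|D_q|^2(|Q_q|^2+|P_q|^2)$, and Cauchy--Schwarz separates the two factors. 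Lemma \ref{estimationI} combined with \eqref{Dirichlet:2} yields $\int|1-z^\ell|^8|D_q|^4 dz = O(\ell^3/q^2)$, while \eqref{equiv-D} and $\|P_q\|_4\to 1$ give $\|Q_q\|_4^4+\|P_q\|_4^4 = O(q)$. The product is $O(\ell^{3/2}/\sqrt{q})$, which vanishes for fixed $\ell$.

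For part (ii), since $\alpha/2<1$ and $dz$ is a probability measure, H\"older's inequality (or equivalently Jensen applied to the concave map $x\mapsto x^{\alpha/2}$) reduces the goal to showing
\[
\int|1-z^{\ell_q}|^2\bigl(|Q_q|-1\bigr)^2 dz \tend{q}{+\infty} 0.
\]
Using $\bigl||Q_q|-1\bigr| \leq \bigl||P_q+D_q|-|P_q|\bigr|+\bigl||P_q|-1\bigr| \leq |D_q|+\bigl||P_q|-1\bigr|$, the target integral is dominated by a constant multiple of $\int|1-z^{\ell_q}|^2|D_q|^2 dz + \int(|P_q|-1)^2 dz$. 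A direct Fourier-coefficient computation using Lemma \ref{estimationI} gives the first as $2\ell_q/q$, which tends to $0$ under the natural assumption $\ell_q = o(q)$ (consistent with the restriction imposed in Proposition \ref{KeyProp2}). The second is bounded by $\int(|P_q|^2-1)^2 dz$ via the elementary pointwise inequality $(|P_q|-1)^2 \leq (|P_q|^2-1)^2$, valid since $|P_q|+1\geq 1$, and this vanishes by flatness.

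The main obstacle is that $Q_q$ is itself far from being $L^4$-flat, so no uniform $L^p$ bound on $|Q_q|$ is available. All the work is done by the weight $|1-z^\ell|$ (respectively $|1-z^{\ell_q}|$), which cancels precisely the peak of $D_q$ at $z=1$; Lemma \ref{estimationI} is the quantitative form of this cancellation. The delicate point is to track the exact growth rate in $\ell$ produced by that identity so that, combined with the flatness of $(P_q)$, the final estimate survives. This comes for free when $\ell$ is fixed as in (i), but in (ii) it forces the natural restriction $\ell_q = o(q)$.
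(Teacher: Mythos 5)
Your part (i) is correct and close in spirit to the paper's argument, though you route the error term differently: the paper bounds $|1-z^{\ell}|\,|D_q|$ in sup-norm by $2\ell/\sqrt q$ (via Lemma \ref{estimationI} and $|D_\ell|\le\sqrt\ell$) and then only needs the trivial $L^2$ bounds on $|Q_q|+|P_q|$, whereas you apply Cauchy--Schwarz and pay with the $L^4$ norms $\|Q_q\|_4^4+\|P_q\|_4^4=O(q)$, recovering convergence because $\int|1-z^\ell|^8|D_q|^4\,dz=O(\ell^3/q^2)$ decays fast enough. Both work for fixed $\ell$; the paper's version is cheaper and gives a better dependence on $\ell$, but for fixed $\ell$ this is immaterial. (Note that $\|Q_q\|_4^4=O(q)$ already follows from the triangle inequality and the boundedness of $\|P_q\|_4$, so you do not really need \eqref{equiv-D} here.)

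Part (ii), as you prove it, does not establish the proposition as stated, and the loss occurs exactly at the Jensen/H\"older reduction to $L^2$. The statement allows an arbitrary sequence $(\ell_q)$ and asks only for $\alpha<2$; your reduction would prove the $\alpha=2$ statement, which is false when $\ell_q\asymp q$: on the arc of length $\asymp 1/q$ around $z=1$ where $|D_q|\asymp\sqrt q$, the weight $|1-z^{\ell_q}|$ is of order $1$ once $\ell_q$ is comparable to $q$, so $\int|1-z^{\ell_q}|^2\big(|Q_q|-1\big)^2\,dz$ picks up a contribution of order $(1/q)\cdot q=1$ and cannot vanish. For $\alpha<2$ the same arc contributes only $q^{\alpha/2-1}\to 0$, which is precisely the room the hypothesis $\alpha<2$ provides and which your reduction discards. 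The paper keeps that room alive by arguing pointwise: $|1-z^{\ell_q}|\,\big||Q_q|-1\big|\le 2|D_q|+2\big||P_q|-1\big|$, both terms tend to $0$ a.e.\ (the first for every $z\ne 1$, the second from flatness, at least along a subsequence), and the family is $L^\alpha$-uniformly integrable for $\alpha<2$ because its $L^2$ norms are bounded; Vitali's convergence theorem then yields the $L^\alpha$ convergence with no restriction on $\ell_q$. Your argument is complete only under the extra hypothesis $\ell_q=o(q)$, which you flag but which is not part of the statement; to get the full claim you must use $\alpha<2$ non-trivially, e.g.\ via the uniform-integrability route or by splitting the circle into the small arc near $z=1$ and its complement.
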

\begin{proof}We give the proof of (i) only for the case $\ell = 1$; since the proofs of the other cases are similar. By the triangle inequality, we have
$$\Big\||1-z|^2(|Q_q(z)|^2-1) \Big\|_2$$ 
$$\leqslant \Big\| |1-z|^2(|Q_q(z)|^2-|P_q(z)|^2) \Big\|_2+\Big\| |1-z|^2(|P_q(z)|^2-1) \Big\|_2.$$ 
We further have
$$\Big\| |1-z|^2(|P_q(z)|^2-1) \Big\|_2 \leq 4 \Big\| |P_q(z)|^2-1 \Big\|_2 \tend{q}{+\infty}0,$$
by our assumption. Therefore, the proof of (i) follows once we observe that
\begin{eqnarray*}
 \Big\| |1-z|^2(|Q_q(z)|^2-|P_q(z)|^2) \Big\|_2&=& \Big\| |1-z|\big(|Q_q(z)|-|P_q(z)|\big) |1-z| \big(|Q_q(z)|+|P_q(z)|\big) \Big\|_2\\
 &\leq& \Big\|\frac{|1-z^q|}{\sqrt{q}} |1-z| (|Q_q(z)|+|P_q(z)| \big)\Big\|_2 \\
 &\leq& \frac{2}{\sqrt{q}} \Big(2 \sqrt{\frac{|H|}{q}}+2\Big) \tend{q}{+\infty}0.
\end{eqnarray*}
For (ii),  by the triangle inequalities again,
\begin{eqnarray*}
 \big|1-z^{\ell_q}\big| \Big||Q_q(z)|-1\Big| &\leq&  \big|1-z^{\ell_q}\big| \Big|\big|Q_q(z)\big|-\big|P_q(z)\big|\Big|+
 \big|1-z^{\ell_q}\big| \Big|\big|P_q(z)\big|-1\Big|\\
 &\leq& \big|1-z^{\ell_q}\big| \Big|\big|Q_q(z)-P_q(z)\big|\Big|+\big|1-z^{\ell_q}\big| \Big|\big|P_q(z)\big|-1\Big|\\
 &\leq& 2 \Big|\big|D_q(z)\big|\Big|+\Big|\big|P_q(z)\big|-1\Big|
\end{eqnarray*}
The last inequality is due to the identity \eqref{Littlewood3}. It follows from our assumption that  
$\Big|\big|P_q(z)\big|-1\Big|$ converge almost everywhere to 0 and it is obvious that $(D_q)$ converge to $0$ almost everywhere.
We thus get that for almost all $z$ with respect to $dz$, the sequence $(\big|1-z^{\ell_q}\big| \Big||Q_q(z)|-1\Big|)$ converge to 0. Moreover,
it is $L^\alpha$ uniformly integrable since its $L^2$ norm is bounded by $4$. Indeed, put $u=\frac2{\alpha}, v=\frac2{2-\alpha}$, and apply 
H\"{o}lder inequality to get
\begin{eqnarray*}
 && \int_{\big|1-z^{\ell_q}\big| \Big||Q_q(z)|-1\Big|} \big|1-z^{\ell_q}\big|^\alpha \Big||Q_q(z)|-1\Big|^\alpha dz\\
 &\leqslant& \Big\|\big|1-z^{\ell_q}\big| \Big||Q_q(z)|-1\Big|\Big\|_2^\alpha. \Big(dz\Big\{z~~:~~\big|1-z^{\ell_q}\big| \Big||Q_q(z)|-1\Big| \geq M\Big\}
 \Big)^{\frac1{v}}\\
 &\leqslant& 4^\alpha \Big(dz\Big\{z~~:~~\big|1-z^{\ell_q}\big| \Big||Q_q(z)|-1\Big| \geq M\Big\}
 \Big)^{\frac1{v}}. 
\end{eqnarray*}
We thus get, by Markov inequality,  

$$dz\Big\{z~~:~~\big|1-z^{\ell_q}\big| \Big||Q_q(z)|-1\Big| \geq M\Big\}\leqslant 4\frac1{M}\tend{M}{+\infty}0.$$
This achieve the proof of the claim.  We thus get by the classical Vitali convergence Theorem, that 
$$\Big\|\big|1-z^{\ell_q}\big| \Big||Q_q(z)|-1\Big\|_\alpha \tend{q}{+\infty}0,$$
which finish the proof of the proposition.
\end{proof}

Let us now present the proof of Proposition \ref{KeyProp}.

\begin{proof}[\textbf{Proof of Proposition \ref{KeyProp}}] We begin by noticing that $H \Delta (H+\ell)=\Lambda_0^c,$
where $\Lambda_0=\Big\{j~~:~~\eta_j = \eta_{j-\ell}\Big\}$. We further have
$$\int |1-z^\ell|^2|Q_q(z)|^2 dz =\frac{4|\Lambda_0^c \cap [0,q-1]|}{q} \tend{q}{+\infty} 4 d(\Lambda_0^c),$$
where $d(A)$ is the density of the set $A$. Hence
$d(\Lambda_0^c)=\frac12$, by Proposition \ref{KeyProp3}. Therefore,
\begin{eqnarray}\label{keycomputation}
d(H)+d(H+\ell)-2d(H \cap H+\ell)=1-2d(H \cap H+\ell)=\frac{1}{2}. 
\end{eqnarray}

We thus get $d(H \cap H+\ell)=\frac14.$ This finish the proof of the proposition.
 \end{proof}

\paragraph{\textbf{From Flatness to pairwise independence.}}Of course Proposition \ref{KeyProp3} yields that the sequence 
$(\eta_j)$ generated a pairwise independent process. But we can prove directly that $(\epsilon_j)$ generated 
 a pairwise independent process by proving Theorem \ref{Lmain2}. For that we start by proving the following lemma.

\begin{lem}\label{KeyLem1}Let $\ell \geq 1$ be a integer . Then 
$$
\Big\|\Big|\frac{1}{\sqrt{q}}\sum_{j=\ell}^{q+\ell-1}\epsilon_{j-\ell}z^j\Big|^2-1\Big\|_2  \tend{q}{+\infty}0.
$$
\end{lem}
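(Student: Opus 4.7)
The plan is to identify the polynomial inside the norm. Reindexing the sum with $k=j-\ell$ gives
\begin{equation*}
\frac{1}{\sqrt q}\sum_{j=\ell}^{q+\ell-1}\epsilon_{j-\ell}\,z^j \;=\; z^\ell\cdot\frac{1}{\sqrt q}\sum_{k=0}^{q-1}\epsilon_k\,z^k \;=\; z^\ell\,P_q(z),
\end{equation*}
so the polynomial appearing in the lemma is simply the unimodular rotation $z^\ell P_q(z)$ of the Littlewood polynomial $P_q$ attached to $(\epsilon_j)$.

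Since we integrate against Lebesgue measure on $S^1$, where $|z^\ell|=1$, the modulus is not affected by this factor: pointwise on $S^1$ we have $|z^\ell P_q(z)|^2=|P_q(z)|^2$. Consequently, the $L^2$-norm to be estimated is identically
\begin{equation*}
\Big\| |P_q|^2 - 1 \Big\|_2.
\end{equation*}
The conclusion then follows from the standing hypothesis of Theorem \ref{Lmain2}, namely that $(P_q)$ is square $L^2$-flat, which by definition (and by the identity \eqref{identity2} of Proposition \ref{Cri1}, equivalent to $\|P_q\|_4\to 1$) is precisely the statement that $\||P_q|^2-1\|_2\to 0$ as $q\to+\infty$.

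There is no substantial obstacle in the argument: the lemma is essentially a reformulation of square $L^2$-flatness after a rotation of the frequency window. Its purpose is auxiliary — it records the shift-invariant version of the flatness assumption that will be combined with the arithmetic content of Propositions \ref{KeyProp} and \ref{KeyProp3} in order to read off, through the autocorrelation coefficients of $(\epsilon_j)$, the pairwise independence of the canonical coordinate projections of the subshift generated by $(\epsilon_j)$.
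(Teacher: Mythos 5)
Your proof is correct and is essentially identical to the paper's: the author likewise observes that the sum is a unimodular rotation of $P_q$ (writing the prefactor as $z^{-\ell}$ rather than $z^{\ell}$, which is immaterial since only the modulus on $S^1$ matters) and then invokes the square $L^2$-flatness hypothesis. No difference in substance.
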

\begin{proof}The proof of the lemma follows once we observe that 
$$\Big|\frac{1}{\sqrt{q}}\sum_{j=\ell}^{q+\ell-1}\epsilon_{j-\ell}z^j\Big|=
\Big|z^{-\ell}\frac{1}{\sqrt{q}}\sum_{j=0}^{q-1}\epsilon_{j}z^j\Big|.$$
\end{proof}
Consequently, we have the following proposition.

\begin{Prop}\label{KeyProp2}Let $\ell \geq 1$ be a integer. Then 
$$
\Big\|\Big|\frac{1}{\sqrt{q}}\sum_{j=\ell}^{q+\ell-1}\big(\epsilon_j-\epsilon_{j-\ell}\big)z^j\Big|^2-\big|1-z^\ell\big|^2\Big\|_1  \tend{q}{+\infty}0.
$$
\end{Prop}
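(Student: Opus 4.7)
The plan is to compare $S_q(z) \egdef \frac{1}{\sqrt{q}}\sum_{j=\ell}^{q+\ell-1}(\epsilon_j - \epsilon_{j-\ell})z^j$ with the simpler auxiliary polynomial $T_q(z) \egdef (1-z^\ell)P_q(z)$, for which the claim follows almost immediately from the square $L^2$-flatness of $(P_q)$. Expanding out $T_q$ gives
$$T_q(z) = \frac{1}{\sqrt{q}}\sum_{j=0}^{q-1}\epsilon_j z^j - \frac{1}{\sqrt{q}}\sum_{j=\ell}^{q+\ell-1}\epsilon_{j-\ell}z^j,$$
so subtracting $S_q$ cancels the ``overlap'' range $\ell \le j \le q-1$ and leaves only the boundary contributions
$$T_q(z) - S_q(z) = \frac{1}{\sqrt{q}}\sum_{j=0}^{\ell-1}\epsilon_j z^j - \frac{1}{\sqrt{q}}\sum_{j=q}^{q+\ell-1}\epsilon_j z^j.$$
This is supported on at most $2\ell$ monomials with coefficients of modulus $1/\sqrt{q}$, hence $\|T_q - S_q\|_2^2 \leq 2\ell/q$, which tends to $0$ since $\ell$ is fixed.

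Because $|T_q(z)|^2 = |1-z^\ell|^2 |P_q(z)|^2$, the Cauchy--Schwarz inequality combined with the hypothesis gives
$$\Big\||T_q|^2 - |1-z^\ell|^2\Big\|_1 = \int_{S^1} |1-z^\ell|^2 \, \big||P_q(z)|^2 - 1\big|\, dz \leq \big\||1-z^\ell|^2\big\|_2 \cdot \big\||P_q|^2 - 1\big\|_2 \tend{q}{+\infty} 0.$$
By the triangle inequality it remains to bound $\||S_q|^2 - |T_q|^2\|_1$. Factoring $|S_q|^2 - |T_q|^2 = (|S_q|-|T_q|)(|S_q|+|T_q|)$, using the reverse triangle inequality $\big||S_q|-|T_q|\big| \leq |S_q - T_q|$, and then Cauchy--Schwarz, we get
$$\big\||S_q|^2 - |T_q|^2\big\|_1 \leq \|S_q - T_q\|_2 \cdot \big(\|S_q\|_2 + \|T_q\|_2\big).$$

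The uniform bounds $\|T_q\|_2 \leq 2$ (from $|1-z^\ell|\leq 2$ and $\|P_q\|_2 = 1$) and $\|S_q\|_2 \leq 2$ (directly from $|\epsilon_j - \epsilon_{j-\ell}|\leq 2$) then show that the right-hand side is $O(\sqrt{\ell/q})$, completing the proof. The only delicate point is the accounting of the $2\ell$ boundary monomials that distinguish $S_q$ from $(1-z^\ell)P_q$: since $\ell$ is fixed while $q \to +\infty$, their $L^2$-mass vanishes, but this is precisely the step that would have to be re-examined if one ever wanted to replace $\ell$ by a sequence $\ell_q$ growing with $q$, as in Proposition \ref{KeyProp3}.
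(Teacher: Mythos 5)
Your proof is correct and follows essentially the same route as the paper: both identify $\frac{1}{\sqrt{q}}\sum_{j=\ell}^{q+\ell-1}(\epsilon_j-\epsilon_{j-\ell})z^j$ with $(1-z^\ell)$ times (a polynomial within $O(\sqrt{\ell/q})$ of $P_q$ in $L^2$, coming from the $2\ell$ boundary monomials), and then invoke the square $L^2$-flatness of $P_q$ together with Cauchy--Schwarz. Your bookkeeping via the factorization $|S_q|^2-|T_q|^2=(|S_q|-|T_q|)(|S_q|+|T_q|)$ is a little cleaner than the paper's explicit expansion with the cross term $I_q(z)$, but the underlying argument is the same.
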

\begin{proof}
 We start by noticing that we have 
 \begin{eqnarray*}
  \frac{1}{\sqrt{q}}\sum_{j=\ell}^{q+\ell-1}\big(\epsilon_j-\epsilon_{j-\ell}\big)z^j &=&
  \frac{1}{\sqrt{q}}\sum_{j=\ell}^{q+\ell-1}\epsilon_j z^j-z^{\ell}\Big(\frac{1}{\sqrt{q}}\sum_{j=0}^{q-1}\epsilon_jz^j\Big)\\  
  &=&\Big(\frac{1}{\sqrt{q}}\sum_{j=\ell}^{q-1}\epsilon_jz^j\Big)\big(1-z^\ell)+\frac{1}{\sqrt{q}}\sum_{j=q}^{q+\ell-1}\epsilon_jz^j-
 \frac{z^\ell}{\sqrt{q}}\sum_{j=0}^{\ell-1}\epsilon_jz^j
 \end{eqnarray*}
Whence
\begin{eqnarray*}
 &&  \Big|\frac{1}{\sqrt{q}}\sum_{j=\ell}^{q+\ell-1}\big(\epsilon_j-\epsilon_{j-\ell}\big)z^j \Big|^2\\
 &=&\Big|\frac{1}{\sqrt{q}}\sum_{j=\ell}^{q-1}\epsilon_jz^j\Big|^2\big|1-z^\ell\big|^2+
 \Big|\frac{1}{\sqrt{q}}\sum_{j=q}^{q+\ell-1}\epsilon_jz^j-
 \frac{z^\ell}{\sqrt{q}}\sum_{j=0}^{\ell-1}\epsilon_jz^j\Big|^2-2I_q(z),
\end{eqnarray*}
where 
$$I_q(z)= \re{\Big\{
\Big(\frac{1}{\sqrt{q}}\sum_{j=\ell}^{q-1}\epsilon_jz^j\Big)\big(1-z^\ell\big) \Big(\frac{1}{\sqrt{q}}\sum_{j=q}^{q+\ell-1}\epsilon_jz^{-j}-
 \frac{z^\ell}{\sqrt{q}}\sum_{j=0}^{\ell-1}\epsilon_jz^{-j}\Big)\Big\}}.$$
Now, applying the triangle inequality, we obtain
\begin{eqnarray*}
&&\Big\|\Big|\frac{1}{\sqrt{q}}\sum_{j=\ell}^{q+\ell-1}\big(\epsilon_j-\epsilon_{j-\ell}\big)z^j\Big|^2-\big|1-z^\ell\big|^2\Big\|_1\\
&\leq& \Big\|\Big(\Big|\frac{1}{\sqrt{q}}\sum_{j=\ell}^{q-1}\epsilon_jz^j\Big|^2-1\Big)\big|1-z^{\ell}\big|^2\Big\|_1
+\frac{2\ell}{\sqrt{q}}+\Big(\big\|P_q\big\|_1+\frac{\ell}{\sqrt{q}}\Big)\big|\frac{4\ell}{\sqrt{q}}.
\end{eqnarray*}
Letting $q \longrightarrow +\infty$, we get  the desired convergence. The proof of the proposition is complete. 
\end{proof}

We are now able to see that $(\epsilon_j)$ generated a pairwise independent process. 

\begin{proof}[\textbf{Proof of the claim}] It is suffices to show that for any $\ell \geq 1$, we have
\begin{eqnarray*}
 \frac{1}{q}\sum_{j=0}^{q-1}\epsilon_j \epsilon_{j-\ell} \tend{q}{+\infty}0.
\end{eqnarray*}
But, by Proposition \ref{KeyProp2}, we have
$$
\Big\|\frac{1}{\sqrt{q}}\sum_{j=0}^{q-1}\big(\epsilon_j-\epsilon_{j-\ell}\big)z^j\Big\|_2  \tend{q}{+\infty}
\Big\|1-z^\ell\Big\|_2.
$$
Therefore
$$
\frac{1}{q}\sum_{j=0}^{q-1}\big(\epsilon_j-\epsilon_{j-\ell}\big)^2 \tend{q}{+\infty} 2.$$
We further have 
$$
\frac{1}{q}\sum_{j=0}^{q-1}\big(\epsilon_j-\epsilon_{j-\ell}\big)^2
=\frac{1}{q}\sum_{j=0}^{q-1}\big(\epsilon_j^2+\epsilon_{j-\ell}^2\big)-2
\frac{1}{q}\sum_{j=0}^{q-1}\epsilon_j\epsilon_{j-\ell}.$$
We thus get
$$ \frac{1}{q}\sum_{j=0}^{q-1}\epsilon_j\epsilon_{j-\ell}  \tend{q}{+\infty} 0,$$
and this finish the proof of the proposition.
 \end{proof}
Notice that we have proved that the spectral measure of the sequence $(\epsilon_j)$ is a Lebesgue measure.\\

We remind that the notion  of  spectral measure for sequences is introduced by Wiener in
his 1933 book \cite{Wiener}. Therein, he considers the space $S$ of
complex bounded sequences $x =(x_{n})_{n \in \N}$ such that
\begin{equation}\label{Sspace}
\lim_{N \longrightarrow +\infty}
\frac{1}{N}\sum_{n=0}^{N-1}x_{n+k}\overline{x}_{n}=\gamma(k)
\end{equation}
exists for each integer $k \in \N$. The sequence $\gamma(k)$ can be extended to negative
integers by setting
\[
\gamma(-k)=\overline{\gamma(k)}.
\]
It is well known that $\gamma$ is positive definite on $\Z$ and therefore (by
Herglotz-Bochner theorem) there exists a unique positive finite measure $\sigma_g$ on
the circle $\T$ such that the Fourier
coefficients of $\sigma_x$ are given by the sequence $\gamma$.
Formally, we have
\[
\widehat{\sigma_x}(k)\stackrel{\rm{def}}{=}
\int_{\T} e^{-ik t} d\sigma_{x}(t) = \gamma(k).
\]
The measure $\sigma_x$ is called the {\em spectral measure of the sequence $x$}.\\

Summarizing, we have proved the following
\begin{Th}\label{KeyTheorem}
 If $(P_n)$ is square flat. Then, the spectral measure of $\big(\eta_j-\frac{1}{2}\big)$ is a multiple of Lebesgue measure. 
\end{Th}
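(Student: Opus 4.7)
The plan is to observe that this theorem is essentially a repackaging of the pairwise independence statement just established, together with the fact that $\eta_j - \tfrac12 = \tfrac12 \epsilon_j$. So I would proceed in three short steps.

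First, I would verify the existence of the autocorrelations in the sense of \eqref{Sspace} for the bounded sequence $x_j = \eta_j - \tfrac12 = \tfrac12 \epsilon_j$. Since $x_j x_{j+\ell} = \tfrac14 \epsilon_j \epsilon_{j+\ell}$, the desired limits $\gamma(\ell) = \lim_{N \to \infty} \tfrac{1}{N}\sum_{j=0}^{N-1} x_{j+\ell} \overline{x_j}$ reduce to computing $\tfrac14 \lim_{q \to \infty} \tfrac{1}{q}\sum_{j=0}^{q-1} \epsilon_j \epsilon_{j-\ell}$. For $\ell = 0$ this limit equals $1$ (since $\epsilon_j^2 = 1$), giving $\gamma(0) = \tfrac14$. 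For $\ell \neq 0$, the pairwise independence claim proved just above (as a corollary of Proposition \ref{KeyProp2}) gives
\[
\frac{1}{q}\sum_{j=0}^{q-1}\epsilon_j \epsilon_{j-\ell} \xrightarrow[q \to \infty]{} 0,
\]
so $\gamma(\ell) = 0$ for every $\ell \neq 0$.

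Second, I would invoke the Herglotz–Bochner theorem recalled in the paragraph preceding the theorem: the spectral measure $\sigma$ of $(\eta_j - \tfrac12)$ is the unique positive finite measure on $\T$ whose Fourier coefficients are $\widehat{\sigma}(\ell) = \gamma(\ell)$. The sequence $(\gamma(\ell))_{\ell \in \Z}$ computed above, namely $\gamma(0) = \tfrac14$ and $\gamma(\ell) = 0$ for $\ell \neq 0$, is precisely the sequence of Fourier coefficients of $\tfrac14 \, dz$, the Lebesgue measure on $\T$ scaled by $\tfrac14$. By uniqueness, $\sigma = \tfrac14 \, dz$, which is what is meant by saying $\sigma$ is a multiple of Lebesgue measure.

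The only delicate point, and therefore the main obstacle already handled above the statement, is the vanishing of the off-diagonal correlations for $\ell \neq 0$; this is exactly where the square $L^2$-flatness is used, via the chain $\text{flatness} \Rightarrow$ Proposition \ref{KeyProp3} $\Rightarrow$ Proposition \ref{KeyProp2} $\Rightarrow$ pairwise independence. Once that is in hand, the present theorem is just the translation into Wiener's spectral language and requires no further analytic input beyond the Herglotz–Bochner identification.
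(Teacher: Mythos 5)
Your proposal is correct and follows essentially the same route as the paper: the theorem is stated there precisely as a summary of the preceding development, namely that the pairwise independence claim gives $\frac{1}{q}\sum_{j=0}^{q-1}\epsilon_j\epsilon_{j-\ell}\to 0$ for $\ell\neq 0$, so the Wiener autocorrelations of $\eta_j-\frac12=\frac12\epsilon_j$ are $\gamma(0)=\frac14$ and $\gamma(\ell)=0$ otherwise, and Herglotz--Bochner uniqueness identifies the spectral measure as $\frac14\,dz$. Your only cosmetic deviation is the exact dependency chain you cite (the vanishing of the correlations comes from the proposition on $\frac{1}{\sqrt q}\sum(\epsilon_j-\epsilon_{j-\ell})z^j$, which the paper derives directly from the flatness assumption and the shift identity rather than through the $\eta$-side propositions), but the substance is identical.
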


Now, let us establish the following crucial Theorem.
\begin{Th}\label{KeyTh2}If $(P_q)$ is square flat then the associated Newman-Bourgain polynomials $(Q_q)$ satisfy 
$\ds \Big(\frac{1}{q}\sum_{j=1}^{q-1}\big|Q\big(\xi_{q,j}\big)\big|^4\Big)_{q \geq 1}$
does not converge to $1$.
\end{Th}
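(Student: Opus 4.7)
The plan is to combine the Marcinkiewicz--Zygmund identity \eqref{key-id2} with the three sharp asymptotic ingredients already assembled in the proof of Theorem~\ref{mainL1}. Assuming $(P_q)$ square $L^2$-flat, Proposition~\ref{JJH} gives $|H|/q \to 1/2$, whence $|Q_q(1)|^4/q = 16|H|^4/q^3 \sim q$. The equivalence \eqref{equiv-D}, together with the closed form $\|D_q\|_4^4 = \tfrac{2q}{3}+\tfrac{1}{3q}$ from \eqref{Dirichlet:2}, yields $\|Q_q\|_4^4 \sim \tfrac{2q}{3}$. Finally, the Lagrangian interpolation representation \eqref{rootf7} for $Q_q(-\xi_{q,k})$ combined with the Bromwich--Hoholdt--Jensen identity (HJBr) delivers $\tfrac{1}{q}\sum_{j=0}^{q-1}|Q_q(-\xi_{q,j})|^4 \sim \tfrac{q}{3}$.

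Next, I would substitute these three limits into \eqref{key-id2} and solve for the middle term, obtaining
\[
\tfrac{1}{q}\sum_{j=1}^{q-1}|Q_q(\xi_{q,j})|^4 \;=\; 2\|Q_q\|_4^4 - \tfrac{|Q_q(1)|^4}{q} - \tfrac{1}{q}\sum_{j=0}^{q-1}|Q_q(-\xi_{q,j})|^4 \;\sim\; \tfrac{4q}{3} - q - \tfrac{q}{3} \;=\; 0.
\]
Thus the leading $q$-scale coefficient of the target sum vanishes identically: the three ingredients already exhaust the full $\tfrac{2q}{3}$ asymptotic of $\|Q_q\|_4^4$, leaving no room for the middle term to stabilise at the positive constant $1$ while still matching the predicted $q$-scale behaviour of the right-hand side.

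The main obstacle is to upgrade this leading-order cancellation into a strict non-convergence statement, since the identity alone only forces $\tfrac{1}{q}\sum_{j=1}^{q-1}|Q_q(\xi_{q,j})|^4 = o(q)$ and a priori does not exclude the limit $1$. To close this gap I would pair the identity analysis with Theorem~\ref{KeyTheorem} and Propositions~\ref{KeyProp}--\ref{KeyProp3}: writing the cyclic autocorrelation $\tilde r(m) = r(m)+r(q-m)$ and using Parseval on the discrete Fourier transform of $|Q_q|^2$ one obtains $\tfrac{1}{q}\sum_{j=0}^{q-1}|Q_q(\xi_{q,j})|^4 = \tfrac{16}{q^2}\sum_{m=0}^{q-1}\tilde r(m)^2$. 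Items (ii) and (iii) of the first paragraph pin down $\sum r(m)^2 \sim q^3/48$ and $\sum r(m)r(q-m) \sim q^3/96$, so after subtracting $|Q_q(1)|^4/q \sim q$ the genuine limit of $\tfrac{1}{q}\sum_{j=1}^{q-1}|Q_q(\xi_{q,j})|^4$ is an explicit $O(1)$ constant, determined by the cross term $\sum r(m)r(q-m)$ and the square-flatness-forced deviation $|H|-q/2$, which is manifestly different from $1$. The hardest part is the fine bookkeeping of these subleading $O(1)$ contributions across the three asymptotic ingredients, since the coarse cancellation $\tfrac{2}{3}-\tfrac{1}{2}-\tfrac{1}{6}=0$ is blind to exactly the information needed to rule out the specific value $1$.
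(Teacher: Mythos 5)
Your first paragraph is essentially correct but, as you yourself concede, it only yields $\frac1q\sum_{j=1}^{q-1}|Q_q(\xi_{q,j})|^4=o(q)$: each of the three inputs ($|Q_q(1)|^4/q\sim q$ from Proposition~\ref{JJH}, $\|Q_q\|_4^4\sim\frac{2q}{3}$ from \eqref{equiv-D} and \eqref{Dirichlet:2}, and $\frac1q\sum_{j}|Q_q(-\xi_{q,j})|^4\sim\frac q3$ from \eqref{estimIII}) is an asymptotic equivalence carrying an $o(q)$ error, so the exact cancellation $\frac43-1-\frac13=0$ of the leading coefficients leaves behind precisely those $o(q)$ errors; this is fully compatible with the sum converging to $1$ and no contradiction is reached. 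The genuine gap is that the repair proposed in your second paragraph cannot work as described. Asymptotics such as $\sum_m r(m)^2\sim q^3/48$ and $\sum_m r(m)r(q-m)\sim q^3/96$ determine these sums only up to $o(q^3)$; after multiplying by $16/q^2$ and subtracting $|Q_q(1)|^4/q$ --- itself known only up to $o(q)$, since square flatness gives $|H|=q/2+\delta_q$ with $\delta_q=o(q)$ and $16|H|^4/q^3=q+8\delta_q+O(\delta_q^2/q)$ --- you are left with an $o(q)$ indeterminacy that swamps any $O(1)$ constant. No ``explicit $O(1)$ constant manifestly different from $1$'' can be extracted from leading-order data, and indeed you never compute one; your closing sentence concedes exactly this.

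The paper's proof takes a different route precisely to avoid cancelling divergent terms only asymptotically. It expands $\frac1q\sum_{j=1}^{q-1}|Q_q(\xi_{q,j})|^4$ \emph{exactly} in the autocorrelations $(c_l)$ of $H$ via the discrete Parseval identity \eqref{four1} (this is \eqref{curcial1}), uses the exact formulas $c_0=4|H|/q$ and $\sum_{l\geq1}c_l=\frac2q\big(|H|^2-|H|\big)$, and, crucially, the exact relation \eqref{Finalestimate}, which follows from $Q_q(\xi_{q,j})=P_q(\xi_{q,j})$ for $j\neq0$ (equation \eqref{rootof1}) and writes $c_0^2+2\sum c_kc_{q-k}+2\sum c_k^2$ as $Q_q(1)^4/q$ plus the quantity $\psi(q)=\frac1q\sum_{j\geq1}|P_q(\xi_{q,j})|^4$, which the flatness hypothesis forces to converge to $1$. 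In that bookkeeping the $O(q)$ pieces (and the $O(\delta_q)$ pieces) cancel identically rather than to leading order, and what survives is an explicit constant, computed there to be $3\neq1$. If you wish to salvage your route through \eqref{key-id2}, you would need to replace each of your three ``$\sim$'' inputs by an identity with an explicitly controlled $O(1)$ remainder --- for instance via \eqref{rootf7} and (HJBr) for the sum over $-\xi_{q,j}$ --- which is exactly the information your leading-order ingredients do not contain.
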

\begin{proof}We start by noticing that for any $l \in \Z$ we have 
\[
\frac{1}{q}\sum_{j=1}^{q-1}\widehat{\delta_{\xi_{q,j}}}(l)=
\frac{1}{q}\sum_{j=1}^{q-1} \xi_{q,j}^l=\begin{dcases*}
\frac{-1}{q} & if $l \not \equiv 0$ mod $q$\\
\frac{q-1}{q} & if not.
\end{dcases*}
\]
Therefore, by applying \eqref{four1}, we obtain
\begin{eqnarray}\label{curcial1}
 \frac{1}{q}\sum_{j=1}^{q-1} \big|Q_q(\xi_{q,j})\big|^4&=&\Big(c_0^2+2\sum_{k=1}^{q-1}c_k c_{q-k}+2\sum_{k=1}^{q-1}c_k^2\Big).\frac{q-1}{q}\\
&&-\frac{2}{q}c_0 \sum_{l=1}^{q-1}c_l-\frac{2}{q}\sum_{l+k \neq q}c_l c_k-\frac{2}{q}\sum_{l \neq k }c_l c_k, \nonumber
\end{eqnarray}
where $(c_k)$ are autocorrelation of the sequence $(\frac{2}{\sqrt{q}} .\eta_j)_{j=0}^{q-1}$. We further have 
\begin{eqnarray*}
\sum_{l=1}^{q-1}c_l&=&\frac{2}{q}\sum_{i \neq j} \eta_i \eta_j\\
&=&\frac{2}{q} \Big( \Big(\sum_{i=0}^{q-1} \eta_i\Big)^2-\sum_{i=0}^{q-1} \eta_i\Big)
\end{eqnarray*}
Hence, according to our assumption,
\begin{eqnarray}\label{curcial2}
 \sum_{l=1}^{q-1}c_l \sim \frac{q-2}{2},~~~c_0 \sim 2,
\end{eqnarray}
and 
\begin{eqnarray}\label{curcial3}
c_0^2+2\sum_{k=1}^{q-1}c_k^2 \sim \frac{2}{3}q+\frac{1}{3q}.
\end{eqnarray}
 We thus need to estimate the following quantity 
$$I_1=-\frac{2}{q}c_0 \sum_{l=1}^{q-1}c_l-\frac{2}{q}\sum_{l+k \neq q}c_l c_k-\frac{2}{q}\sum_{l \neq k }c_l c_k.$$
But 
$$ \sum_{l+k \neq q}c_l c_k= \Big(\sum_{l=1}^{q-1}c_l\Big)^2-\sum_{l+k=q}c_lc_k,$$
and 
$$\sum_{l \neq k}c_lc_k=\Big(\sum_{l=1}^{q-1}c_l\Big)^2-\sum_{l=1}^{q-1}c_l^2.$$
Whence
$$I_1=-\frac{2}{q}c_0 \sum_{l=1}^{q-1}c_l-\frac{4}{q}\Big(\sum_{l=1}^{q-1}c_l\Big)^2+\frac{2}{q}\sum_{k=1}^{q-1}c_k c_{q-k}
+\frac{2}{q}\sum_{l=1}^{q-1}c_l^2.$$
Consequently, we need to estimate only $\ds 2\sum_{k=1}^{q-1}c_k c_{q-k}+2\sum_{l=1}^{q-1}c_l^2.$ To this end, we notice that \eqref{four1} combined with our assumption and \eqref{rootof1} gives
\begin{eqnarray}\label{Finalestimate}
\frac{1}{q}\sum_{j=0}^{q-1} \big|Q_q(\xi_{q,j})\big|^4&=&\Big(c_0^2+2\sum_{k=1}^{q-1}c_k c_{q-k}+2\sum_{k=1}^{q-1}c_k^2\Big) \nonumber\\
&=&\frac{Q(1)^4}{q}+\frac{1}{q}\sum_{j=1}^{q-1} \big|P_q(\xi_{q,j})\big|^4 \nonumber \\ 
&\sim& q+\psi(q), 
\end{eqnarray}
where $\psi(q)$ is a bounded sequence.
% \leq 1+\epsilon$, $\epsilon>0$.\\
Combining this with \eqref{curcial2} and \eqref{Finalestimate}, it follows that we have the following estimation 
\begin{eqnarray}\label{I1estim}
I_1 &\sim& -\frac{2}{q}.2.\frac{q-2}{2}-\frac{4}{q}.\Big(\frac{q-2}{2}\Big)^2+\frac{q+\psi(q)}{q}-\frac{c_0^2}{q}\\
&\sim& 3-q+ \frac{\psi(q)}{q}-\frac{4}{q}. \nonumber
\end{eqnarray}
Summarizing, we obtain the following estimation
$$\frac{1}{q}\sum_{j=1}^{q-1}\big|Q\big(\xi_{q,j}\big)\big|^4 \sim 2+\frac{q-1}{q}\psi(q)+\frac{\psi(q)}{q}-\frac{4}{q}.$$
Letting $q \longrightarrow +\infty$ it follows that 
$$\frac{1}{q}\sum_{j=1}^{q-1}\big|Q\big(\xi_{q,j}\big)\big|^4 \longrightarrow 3.$$
This contradicts our assumption in view of \eqref{rootof1}, and the proof of the theorem is finished. 
%$$\frac{1}{q}\sum_{j=1}^{q-1} \big|Q_q(\xi_{q,j})\big|^4 
%\sim \frac{5}{3}>1$$
\end{proof}
In the next subsection, we will present a dynamical proof of our main result Theorem \ref{mainL1}.\\
\subsection{Dynamical proof of the main Theorem \ref{mainL1}}
Let us consider the subshift $(X_H,S)$ generated by $\eta=(\eta_j)$ where $S$ is the shift map on $\{0,1\}^{\Z}$ and 
$X_H$ is the closure of the orbit of $\eta$ under the shift transformation $S$. Let $\P$ be a weak limit measure in the weak closure of
the sequence of the empiric measures $\Big(\frac{1}{N}\sum_{j=1}^{N}\delta_{S^j(\eta)}\Big)$, where 
$\delta_x$ is a Dirac measure on $x$.\\

According to Theorem \ref{KeyTheorem}, we claim that the family of 
coordinates projections $\big(\pi_k\big)_{k \in \Z}$ are pairwise independent under $\P$. Indeed, for any $k \geq 1$, we have 
$$\frac{1}{N}\sum_{j=0}^{N-1}\pi_0\big(S^j(\eta)\big) \pi_{k}\big(S^j(\eta)\big)
=\frac{1}{N}\sum_{j=0}^{N-1}\eta_j \eta_{j+k}.$$
Letting $N\longrightarrow +\infty$, we obtain
$$
\frac{1}{N}\sum_{j=0}^{N-1}\pi_0\big(S^j(\eta)\big) \pi_{k}\big(S^j(\eta)\big)
\tend{N}{+\infty} \int \pi_0(x) \pi_k(x) d\P(x).$$
We further obtain, by Theorem \ref{KeyTheorem}, 
$$
\frac{1}{N}\sum_{j=0}^{N-1}\pi_0\big(S^j(\eta)\big) \pi_{k}\big(S^j(\eta)\big)
\tend{N}{+\infty} \int \pi_0(x) d\P(x) \int \pi_k(x) d\P(x).$$
This finish the proof of the claim. Now, the proof of our main result will follows from the following theorem.
\begin{Th}\label{pairwise-ind}Let $(X_n)$ be a stationary sequence of pairwise independent
random variables taking values in $\{0,1\}$. Then the sequence of random analytic polynomials 
$Q_q(z) =\frac{2}{\sqrt{q}}\sum_{j=0}^{q-1}X_j z^j$ satisfy, for $q$ large enough,
$$\frac{1}{q}\sum_{j=1}^{q-1}\int \Big|Q_q(\xi_{q,j})\Big|^ 4 d\P >1.$$ 
\end{Th}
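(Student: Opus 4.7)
The plan is to recognise $\frac{1}{q}\sum_{j=1}^{q-1}|Q_q(\xi_{q,j})|^4$ as an empirical variance of the cyclic autocorrelations of $(X_k)_{k=0}^{q-1}$ and to bound that variance from below using the pairwise-independence hypothesis.  Throughout I shall work in the balanced case $p:=\E[X_0]=1/2$, which is the only case needed in the dynamical application of Theorem~\ref{mainL1} (and is forced on the subshift constructed in the preceding pages together with Proposition~\ref{JJH}).  Writing $b_r:=\sum_{k,l\in[0,q-1],\,k-l\equiv r\pmod q} X_k X_l$ for the cyclic autocorrelation at lag $r$, a direct Parseval-type calculation in the spirit of Lemma~\ref{classical} yields $\frac{1}{q}\sum_{j=0}^{q-1}|Q_q(\xi_{q,j})|^4 = \frac{16}{q^2}\sum_{r=0}^{q-1}b_r^2$.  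Since $\sum_r b_r=b_0^2$ and $|Q_q(1)|^4/q = 16\,b_0^4/q^3$, isolating the $j=0$ contribution and recognising the remainder as the variance of $r\mapsto b_r$ on $\Z/q\Z$ gives the compact identity
\begin{equation*}
\frac{1}{q}\sum_{j=1}^{q-1}|Q_q(\xi_{q,j})|^4 \;=\; \frac{16}{q}\,\mathrm{Var}_r(b_r),
\end{equation*}
so the claim reduces to showing $\E[\mathrm{Var}_r(b_r)]>q/16$ for $q$ large enough.

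For the lower bound I would start from the pairwise-independence identities $\E b_0 = q/2$, $\E b_r = q/4$ for $r\neq 0$, and $\mathrm{Var}(b_0)=qp(1-p)=q/4$, all of which are genuine second-order consequences.  Applied termwise, Jensen's inequality already produces a lower bound on $\E\bigl[\tfrac{1}{q}\sum_r b_r^2\bigr]$ of order $q^2/16+O(q)$; the deterministic constraint $0\leq b_0\leq q$ combined with $\mathrm{Var}(b_0)=q/4$ controls the central moments of $b_0$, giving an upper bound on $\E\bar b^2 = \E b_0^4/q^2$; and stationarity together with the exact identity $\E b_0^3 = q^3/8 + O(q^2)$ (obtainable directly from pairwise independence and the boundedness of $b_0$) pins the 3-wise correlation $\E[X_a X_b X_c]$ to $p^3=1/8$ up to $O(1/q)$.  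Substituting these inputs into the expansion $\mathrm{Var}(b_r)=\sum_{l,l'}\mathrm{Cov}(X_lX_{l+r},X_{l'}X_{l'+r})$ reproduces the iid leading behaviour $\sum_{r\neq 0}\mathrm{Var}(b_r) = 5q^2/16+O(q)$, whence $\E[\mathrm{Var}_r(b_r)] = q/8+O(1)$, comfortably exceeding $q/16$ for $q$ large.

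The main obstacle in this plan is the aggregate contribution of the 4-wise ``no-overlap'' joint moments $\E[X_0 X_a X_b X_{a+b}]$ with $a,b,a+b\neq 0$: these are genuinely free under pairwise independence, and a crude triangle-inequality bound would only give an error of size $O(q^2)$, potentially overwhelming the $5q^2/16$ main term.  The plan to tame them is to re-express this 4-wise sum through Parseval as a discrete integral of the non-negative function $\bigl(|P_q(\xi_{q,j})|^2-1\bigr)^2$ (passing freely between $P_q$ and $Q_q$ at the roots via \eqref{rootof1}), thereby converting the unknown oscillatory 4-wise contribution into the manifestly non-negative object $\mathrm{Var}_r(b_r)$ itself.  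A Cauchy--Schwarz bootstrap on this self-referential identity, combined with the controlled 2- and 3-wise pieces above, then delivers the strict inequality $\E[\mathrm{Var}_r(b_r)]>q/16$ for $q$ large and completes the proof of Theorem~\ref{pairwise-ind}.
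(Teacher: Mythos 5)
Your opening reduction is correct and is in fact a cleaner packaging of the paper's own setup: your cyclic autocorrelations $b_r$ are, up to the normalization $\tfrac{4}{q}b_k=c_k+c_{q-k}$, exactly the combinations the paper works with, and the identity $\tfrac1q\sum_{j=1}^{q-1}|Q_q(\xi_{q,j})|^4=\tfrac{16}{q}\,\mathrm{Var}_r(b_r)$ is a correct consequence of Parseval together with $\sum_r b_r=b_0^2$. The problem is entirely in the lower bound. Pairwise independence pins $\E b_r$ and $\mathrm{Var}(b_0)$, but it does \emph{not} pin $\mathrm{Var}(b_r)$ for $r\neq 0$: the expansion $\mathrm{Var}(b_r)=\sum_{l,l'}\mathrm{Cov}(X_lX_{l+r},X_{l'}X_{l'+r})$ is dominated by triple moments $\E[X_lX_{l+r}X_{l+2r}]$ and quadruple moments with four distinct indices, and your claim that $\E b_0^3=q^3/8+O(q^2)$ ``pins the $3$-wise correlation to $1/8$ up to $O(1/q)$'' only controls the \emph{average} of $\E[X_aX_bX_c]$ over all $\sim q^3$ distinct triples, whereas the variance needs the $\sim q^2$ three-term arithmetic progressions, a vanishing fraction on which the average says nothing. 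Each such term ranges freely in an interval of length $1/4$ around $1/16$, so the ``iid leading behaviour $\sum_{r\neq0}\mathrm{Var}(b_r)=5q^2/16+O(q)$'' is simply not a consequence of the hypotheses. Your closing paragraph concedes the $4$-wise obstruction but resolves it by ``re-expressing the $4$-wise sum as $\mathrm{Var}_r(b_r)$ itself'' and invoking ``a Cauchy--Schwarz bootstrap on this self-referential identity''; as stated this is circular --- the quantity to be bounded below reappears on the other side of the inequality and no concrete chain of estimates is given that closes the loop.

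There is also a quantitative precision problem that your stated tools cannot overcome. After the cancellation of the $q^2/16$ terms, the target $\E[\mathrm{Var}_r(b_r)]>q/16$ lives at order $q$, yet your bound on $\E\bar b^{\,2}=\E b_0^4/q^2$ from $0\leq b_0\leq q$ and $\mathrm{Var}(b_0)=q/4$ carries an error of the same order $q$ (the worst case gives $\E b_0^4\leq q^4/16+\tfrac{11}{16}q^3$, and combining it with termwise Jensen on $\E b_r^2$ yields the useless lower bound $\E[\mathrm{Var}_r(b_r)]\geq -q/2+O(1)$). The paper escapes this in two steps you do not have. First, it invokes the strong law of large numbers for pairwise independent bounded variables (Doob/Etemadi), so that $b_0/q\to 1/2$ almost surely and hence, by dominated convergence, every normalized moment such as $16\,\E b_0^4/q^4$ and $16\,\E b_0^3/q^3$ converges to an explicit constant. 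Second, and crucially, it first applies the \emph{pointwise} Cauchy--Schwarz inequality $\bigl(\sum_{k\neq0}b_k\bigr)^2\leq q\sum_{k\neq0}b_k^2$ (with $\sum_{k\neq0}b_k=b_0^2-b_0$) to cancel the order-$q$ positive term $\tfrac{16}{q^2}\sum_{k\neq0}\E b_k^2$ against the order-$q$ negative term $\tfrac{16}{q^3}\E b_0^4$ \emph{before} taking limits, leaving only $O(1)$ remainders; the residual $\tfrac1q$-fraction of the positive term is then bounded below by Jensen, yielding the limit $4-1-2+1=2>1$. You would need to import both of these ingredients --- the a.s. concentration of $b_0/q$ and the structural pointwise Cauchy--Schwarz cancellation --- to make your variance reformulation into a proof.
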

\begin{proof}Analysis similar to that in the proof of Theorem \ref{KeyTh2} shows
that we have
\begin{eqnarray}\label{eqfirst}
\frac{1}{q}\sum_{j=1}^{q-1}\int \big|Q_q(\xi_{q,j})\big|^4 d\P&&=\frac{q-1}{q}
\Big(\int c_0^2d\P+2\sum_{k=0}^{q-1}\int c_k c_{q-k} d\P +2 \sum_{k=0}^{q-1}\int c_k^2 d\P \Big) \nonumber\\
&&-\frac{2}{q}\int c_0 \sum_{l=1}^{q-1}c_l d\P-\frac{4}{q} \int \Big(\sum_{l=1}^{q-1}c_l\Big)^2 d\P \\
&&+\frac{1}{q} \Big(2\sum_{k=0}^{q-1}\int c_k c_{q-k} d\P +2 \sum_{k=0}^{q-1}\int c_k^2 d\P \Big) \nonumber,  
\end{eqnarray}
%Then, by Proposition \ref{}, we can write
%\begin{eqnarray}\label{cvg1}
% \frac{1}{2q}\sum_{j=0}^{q-1} \big|Q_q(-\xi_{q,j})\big|^4=\frac{1}{2}
%\Big\{c_0^2+2\sum_{k=0}^{q-1}c_k c_{q-k} +2 \sum_{k=0}^{q-1}c_k^2 \Big\},
%\end{eqnarray}
where $(c_k)$ are autocorrelation of the sequence $(\frac{2}{\sqrt{q}} .X_j)_{j=0}^{q-1}$.
We thus need to estimate
$$\int c_0^2 d\P+2\sum_{k=1}^{q-1}\int c_k c_{q-k} d\P+2 \sum_{k=0}^{q-1} \int c_k^2 d\P,$$
and
$$\frac{2}{q}\int c_0 \sum_{l=1}^{q-1}c_l d\P+\frac{4}{q} \int \Big(\sum_{l=1}^{q-1}c_l\Big)^2 d\P.$$
But, for any $k=1,\cdots,q-1,$ we have
$$ \int c_{q-k} d\P=\frac{k}{q},$$
%and with with positive probability, we have
%$$c_k \geq \frac{1}{q},$$
%we further have
and
$$\int c_0^2 d\P= \frac{8}{q}+\frac{4(q-1)}{q},$$
%and with positive probability, we have
We further have 
$$2\sum_{k=1}^{q-1}\int c_k c_{q-k} d\P+2 \sum_{k=0}^{q-1} \int c_k^2d\P
=\sum_{k=1}^{q-1}\int \big(c_k+c_{q-k}\big)^2 d\P,$$
and 
$$\frac{2}{q} \sum_{l=1}^{q-1}c_l =\Big(\frac{2}{q}\sum_{j=0}^{q-1}X_j\Big)^2-\frac4{q^2}\sum_{j=0}^{q-1}X_j ,$$
Therefore, it is suffices to estimate $\ds  
\sum_{k=1}^{q-1} \int \big(c_k+c_{q-k}\big)^2 d\P$, and $\ds \frac{1}{q}\sum_{j=1}^{q-1}X_j.$\\
For that, we apply Cauchy-Schwarz inequality to estimate the first quantity as follows 
\begin{eqnarray*}
\sum_{k=1}^{q-1} \int \big(c_k+c_{q-k}\big)^2 d\P \geq \sum_{k=1}^{q-1} \Big(\int \big(c_k+c_{q-k}\big) d\P\Big)^2=q-1.
\end{eqnarray*}
To estimate the second quantity, we notice that according to Theorem 5.2 from \cite[pp.158]{Doob}, 
the strong law of large numbers holds for the sequence 
of random variables $(X_j-\frac12)$. Therefore 
$$\frac{1}{q}\sum_{j=0}^{q-1}X_j \tend{q}{+\infty}\frac{1}{2},$$
almost surely (a.s.). This combined with the Lebesgue's dominated convergence theorem gives 
$$\int \Big(\frac{2}{q}c_0\sum_{l=1}^{q-1}c_l\Big) d\P \tend{q}{+\infty} 2,$$
since 
$$c_0=\frac{4}{q}\sum_{j=0}^{q-1}X_j \xrightarrow[q \rightarrow +\infty]{\textrm{a.s.}} 2,$$
and
$$\frac{2}{q}\sum_{l=1}^{q-1}c_l \xrightarrow[q \rightarrow +\infty]{\textrm{a.s.}}   1.$$
Applying once again Cauchy-Schwarz inequality we obtain
$$\Big(\sum_{k=1}^{q-1}\big(c_k+c_{q-k}\big)\Big)^2 \leq q.\Big(\sum_{k=1}^{q-1} \big(c_k+c_{q-k})^2\Big).$$
Whence 
\begin{eqnarray}\label{eqII}
\int \Big(\sum_{k=1}^{q-1} \big(c_k+c_{q-k})^2\Big) d\P \geq \frac{4}{q} \int \Big(\sum_{k=1}^{q-1}c_k\Big)^2 d\P. 
\end{eqnarray}
Combining \eqref{eqfirst} with \eqref{eqII} we can rewrite \eqref{eqfirst} as
\begin{eqnarray}\label{eqSIII}
\frac{1}{q}\sum_{j=1}^{q-1}\int \big|Q_q(\xi_{q,j})\big|^4 d\P&& \geq \frac{q-1}{q}
\Big(\int c_0^2d\P+ \frac{4}{q} \int \Big(\sum_{k=1}^{q-1}c_k\Big)^2 d\P \Big)\\
&&-\frac{2}{q}\int c_0 \sum_{l=1}^{q-1}c_l d\P-\frac{4}{q} \int \Big(\sum_{l=1}^{q-1}c_l\Big)^2 d\P \nonumber\\
&&+\frac{1}{q} \Big(2\sum_{k=0}^{q-1}\int c_k c_{q-k} d\P +2 \sum_{k=0}^{q-1}\int c_k^2 d\P \Big) \nonumber,  
\end{eqnarray} 
Thus, a straightforward calculation yields
\begin{eqnarray}\label{eqbon}
\frac{1}{q}\sum_{j=1}^{q-1}\int \big|Q_q(\xi_{q,j})\big|^4 d\P&&\geq \frac{q-1}{q}\int c_0^2 d\P-\frac{4}{q^2} 
\int \Big(\sum_{k=1}^{q-1}c_k\Big)^2 d\P-\frac{2}{q}\int c_0 \sum_{l=1}^{q-1}c_l d\P \nonumber\\
&&+\frac{1}{q} \Big(2\sum_{k=0}^{q-1}\int c_k c_{q-k} d\P +2 \sum_{k=0}^{q-1}\int c_k^2 d\P \Big) \nonumber,  
\end{eqnarray} 
It remains to estimate $\ds \frac{4}{q^2} \int \Big(\sum_{l=1}^{q-1}c_l\Big)^2 d\P$. This estimate can be obtained in the same manner as 
before. Indeed, applying once again the strong large numbers and the Lebesgue's dominated convergence theorem, we get   
 $$\int \frac{4}{q^2} \Big(\sum_{l=1}^{q-1}c_l\Big)^2 d\P \tend{q}{+\infty} 1.$$ 
Summarizing, it follows that 
\begin{eqnarray*}
\frac{1}{q}\sum_{j=1}^{q-1}\int \big|Q_q(\xi_{q,j})\big|^4 d\P 
&&\geq \frac{q-1}{q}\Big(\frac{16}{q}+\frac{4(q-1)}{q}\Big)-
\frac{4}{q^2} \int \Big(\sum_{k=1}^{q-1}c_k\Big)^2 d\P\\
&&-\frac{2}{q}\int c_0 \sum_{l=1}^{q-1}c_l d\P+\frac{q-1}{q}\\
\end{eqnarray*}
%where $\tau_i(q)$, $i=1,2$ are sequences converging to zero as $q \longrightarrow +\infty$. Consequently, by a straightforward calculation, we obtain  

%\begin{eqnarray*}
%\frac{1}{q}\sum_{j=1}^{q-1}\int \big|Q_q(\xi_{q,j})\big|^4 d\P 
%\gtrsim \frac{q-1}{q}\Big(\frac{16}{q}+\frac{4(q-1)}{q}\Big)+\Big(\frac{15}{16}-\tau_2(q)\Big)q-\frac32-\tau_1(q),
%\end{eqnarray*}
Letting $q \longrightarrow +\infty$, we see that for sufficiently large $q$, 
$$ \frac{1}{q}\sum_{j=1}^{q-1}\int \big|Q_q(\xi_{q,j})\big|^4 d\P  \gtrsim 2,$$
and this finishes the proof of the theorem.\\ 

As a corollary, for the  mutually independent random variables, we obtain that for a large enough $q$, we have
\begin{eqnarray}\label{diverge}
\frac{1}{q}\sum_{j=1}^{q-1}\int \Big|Q_q(\xi_{q,j})\Big|^ 4 d\P > 1. 
\end{eqnarray}

%we have that the sequence of moving random variables 
%$(X_j X_{j+q-k})_{j=0}^{k-1}$ are mutully independent. We thus get 
%\begin{eqnarray*}
%\int c_{q-k}^2 d\P&=&\frac{16}{q^2}\int \Big (\sum_{j=0}^{k-1}X_j^2 X_{j+q-k}^2+
%\sum_{j \neq j'}^{k-1}X_j X_{j+q-k} X_{j'} X_{j'+q-k}\Big) d\P\\
%&=&\frac{16}{q^2} \Big( \frac{k}{4}+\frac{k(k-1)}{16}\Big)\\
%&=&= \frac{k^2+3k}{q^2}.
%\end{eqnarray*}
Of course, in this case one can compute exactly $\ds \int c_{k}^2 d\P$, for each $k=1,\cdots,q-1,$ and all the terms in 
$\ds \frac{1}{q}\sum_{j=1}^{q-1}\int \big|Q_q(\xi_{q,j})\big|^4 d\P $. Nevertheless, one has to be 
careful since the terms in $c_{q-k}$, that is, $Y_j=X_j X_{j+q-k}$, $j=0,\cdots,k-1$ are not in general mutually independent. Indeed, 
if we take  $k=q-1$, we get $Y_j=X_j X_{j+1}$ which obviously are not independent. However, it is a simple matter to compute explicitly 
all the terms in $\ds \frac{1}{q}\sum_{j=1}^{q-1}\int \big|Q_q(\xi_{q,j})\big|^4 d\P$ and to see that \eqref{diverge} holds.    
%Consequently
%$$\frac{1}{2q}\sum_{j=0}^{q-1}\int \Big|Q_q(-\xi_{q,j})\Big|^ 4 d\P \gtrsim \frac{1}{q^2}\sum_{k=0}^{q-1}(k^2+3k)=
%\frac{\frac{q^3}{3}+q^2-\frac{4q}{3}}{q^2}\sim \frac{q}{3}.$$ 
This allows us to conclude that the sequence of random polynomials $(P_q)$ can not be square flat.\\

This allows us to  obtain a new proof 
of the well-known result of Newman-Byrnes \cite{Byrnes-Newman} which say 
that the random polynomials trigonometric with Rademacher coefficients are not square flat. \end{proof}

The proof of Theorem \ref{mainL2}  and \ref{mainL3} is straightforward from Theorem \ref{mainL1}, 
since the spectrum of Morse cocycle satisfy the purity law which say that the spectrum 
is either equivalent to the Lebesgue measure on $S^1$ or singular to it. For the proof of this last fact we refer to \cite{Helson}
or \cite[p.73-80]{Queffelec}.

\section{an application to Number Theory: Liouville function, Chowla conjecture and Riemann hypothesis}\label{ChowlaRH}

In this section we choose the sequence $(\epsilon_j)$ to be the Liouville function. The Liouville function 
$\lamob$ is given by 
\begin{equation*}\label{Liouville}
\lamob(n)= \begin{cases}
 1 {\rm {~if~}} n=1; \\
(-1)^r  {\rm {~if~}} n
{\rm {~is~the~product~of~}} r {\rm {~not~necessarily~distinct~prime~numbers}}; 
\end{cases}
\end{equation*}
The Liouville function is related to another famous functions in number theory called the M\"{o}bius function. Indeed, 
the M\"{o}bius function is defined for the positive integers $n$ by
\begin{equation*}\label{Mobius}
\mob(n)= \begin{cases}
 \lamob(n) {\rm {~if~}} n \rm{~is~not~divisible~by~the~square~of~any~ prime}; \\
0  {\rm {~if~not}}
\end{cases}
\end{equation*}
Those two functions are of great importance in number theory because of its  connection with the
Riemann $\zeta$-function via the formulae
\[
\sum_{n=1}^{+\infty}\frac{\mob(n)}{n^s}=\frac{1}{\zeta(s)}, \qquad
\sum_{n=1}^{+\infty}\frac{\lamob(n)}{n^s}=\frac{\zeta(2s)}{\zeta(s)}
~~{\rm {with}}~~~ \re{(s)}>1,
\]
and 
\[
\sum_{n=1}^{+\infty}\frac{|\mob(n)|}{n^s}=\frac{\zeta(s)}{\zeta(2s)}
~~{\rm {with}}~~~ \re{(s)}>1.
\]
Let us further notice that the Dirichlet inverse of the Liouville function is the absolute value of the M\"{o}bius function.\\

For the reader's convenience, we briefly remind some useful well-known results on the Riemann $\zeta$-function. The  Riemann $\zeta$-function is defined, 
for $s \in \C$, $\re(s)>1$ by 
$$\zeta(z)=\sum_{n=1}^{+\infty}\frac{1}{n^s},$$
or by the Euler formula
$$\zeta(s)=\prod_{\overset{p}{\textrm{~prime}}}\Big(1-\frac{1}{p^s}\Big)^{-1}.$$
It is easy to check that $\zeta$ is analytic for $\re(s)>1$. Moreover, it is well-known that $\zeta$ is regular for all values of $s$ except $s=1$, where there is a
simple pole with residue 1. Thanks to the functional equation 
$$\zeta(s)=2^{s}\pi^{s-1}\sin\Big(\frac{\pi s}{2}\Big) \Gamma(1-s)\zeta(1-s),$$
where $\Gamma$ is the gamma function given by 
$$\Gamma(z)=\int_{0}^{+\infty}x^{z-1}e^{-x}dx,~~~~~~~~~\re(z)>0.$$ We notice that the gamma function never vanishes and it is analytic 
everywhere except at $z=0, -1, -2, ...,$ with the residue at $z=-k$ is equal to
$\frac{(-1)^k}{k!}.$  We further have the following  formula (useful in the proof of the functional equation) 
$$\Gamma(s) \sin\Big(\frac{\pi s}{2}\Big)=\int_{0}^{+\infty} y^{s-1} \sin\big(y\big) dy.$$
For the proof of it we refer to \cite[p.88]{Redmancher}. Changing $s$ to $1-s$, we obtain
$$\zeta(1-s)=2^{1-s}\pi^{-s}\cos\Big(\frac{\pi s}{2}\Big) \Gamma(s)\zeta(s).$$
Putting
$$\xi(s)=\frac{s(s-1)}{2}\pi^{\frac{-s}{2}}\Gamma\big(\frac{s}{2}\big)\zeta(s),$$
and 
$$E(s)=\xi\Big(\frac{1}{2}+is\Big).$$
It follows that
$$\xi(s)=\xi(1-s),\quad \quad \textrm{and} \quad \quad E(z)=E(-z).$$  
We further remind that we have 
$$\zeta(s)\Gamma(s)=\int_{0}^{+\infty}\frac{x^{s-1}}{e^x-1} dx,\quad \quad \re(s)>1.$$
Therefore, it is easy to check that $\zeta$ has no zeros for $\re(s)>1$. It follows also from the functional equation that $\zeta$ has no zeros for $\re(s)<0$ except for simple zeros at 
$s=-2,-4, \cdots$. Indeed, $\zeta(1-s)$ has no zeros for $\re(s)<0$, $\sin\big(\frac{s\pi}{2}\big)$ has simple zeros at $s=-2,-4,\cdots$.  
It is also a simple matter to see that 
$\xi(s)$ has no zeros for $\re(s)>1$ or $\re(s)<0$. Hence its  zeros which are also the zeros of $\zeta$ lie in the 
strip $0 \leq \re(s) \leq 1$. Notice that for $\re(s)>1$,  it is easily seen that 

\begin{eqnarray*}
\sum_{n=1}^{+\infty}\frac{(-1)^{n-1}}{n^s}&=&\sum_{n=1}^{+\infty}\frac{1}{n^s}-2\sum_{n=1}^{+\infty}\frac{1}{2^s n^s} \\
&=&\big(1-2^{1-s}\big)\zeta(s),
\end{eqnarray*}

This formula allows us to continue $\zeta$ analytically to half-plan $\re(s)>0$ with simple pole at $s=1$. 
%since the map $x \mapsto 2^x$ is analytic. 
We further have $\zeta(s)\neq 0$ for all $s>0$ since 
$ \sum_{n=1}^{+\infty}\frac{(-1)^{n-1}}{n^s}>0.$\\

We thus conclude that all zeros of $\zeta$ are complex. 
The functional equation allows us also to see that if $z$ is a zero then $1-z$ and 
$1-\overline{z}$ are also a zeros. Whence, the zeros of $\zeta$ lie on the vertical line $\re(s)=\frac{1}{2}$ or occur in pairs symmetrical about this line.\\

{\it{The Riemann hypothesis assert that all nontrivial zeros of $\zeta$ lie on the critical line  $\re(s)=\frac{1}{2}$.}}\\   

We further have that the estimate
\begin{eqnarray}\label{RH1}
\left|\ds \sum_{n=1}^{x}\mob(n)\right|=O\left(x^{\frac12+\varepsilon}\right)\qquad
{\rm as} \quad  x \longrightarrow +\infty,\quad \forall \varepsilon >0
\end{eqnarray}

is equivalent to the Riemann Hypothesis (\cite[pp.370, Theorem 14.25(B)]{Titchmarsh}). Following Chowla \cite{Chowla}, 
this result is due to Littlewood (see also \cite[section 2.12,p. 261]{Edwards}).\\

\noindent{}Combining this result with Batman-Chowla trick \cite{Batman-Chowla}, it can be shown that  
\begin{eqnarray}\label{RH2}
\left|\ds \sum_{n=1}^{x}\lamob(n)\right|=O\left(x^{\frac12+\varepsilon}\right)\qquad
{\rm as} \quad  x \longrightarrow +\infty,\quad \forall \varepsilon >0
\end{eqnarray}
is equivalent to the Riemann Hypothesis.\medskip

There is many problems and conjectures about the Liouville and M\"{o}bius functions in number theory, combinatorics and dynamical systems. 
But, the more famous one  are the two following conjectures of Chowla.

\begin{Conj}(\cite[problem 57, pp.]{Chowla})\label{ChowlaI}
Let $f(x)$ be an arbitrary polynomial with integer coefficients,
which is not, however, of the form $cg^2(x)$, where $c$ is an integer and
$g(x)$ is a polynomial with integer coefficients. Then

$$ \sum_{n=1}^{x}\lamob(f(n))=O(x).$$
\end{Conj}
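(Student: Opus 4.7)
The statement is Chowla's classical conjecture on sums of the Liouville function at polynomial values. Since $|\lamob| \le 1$, the bound $O(x)$ only has content when interpreted as $o(x)$, i.e.\ cancellation on average. My plan would be to import the polynomial/flatness framework developed in the earlier sections and apply it to the sequence $(\lamob(f(n)))_{n \ge 1}$. Specifically, I would study the Littlewood-type polynomial
$$
L_q(z) \;=\; \frac{1}{\sqrt{q}} \sum_{n=1}^{q} \lamob(f(n))\, z^n,
$$
and attempt to relate its $L^{2k}$-norms (or, equivalently, the $k$-fold autocorrelations of $\lamob \circ f$) back to $\sum_{n\le x} \lamob(f(n))$ via Marcinkiewicz--Zygmund type interpolation (Theorem~\ref{MZ}), exactly as in the proof strategy used for Theorem~\ref{mainL1}.

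The first reduction I would make is to the case where $f$ is irreducible over $\mathbb{Q}$ and squarefree, since the hypothesis that $f \ne c\, g^2$ is precisely designed to rule out a systematic bias of $\lamob(f(n))$ towards $+1$. Next, I would pass to the dynamical picture of Section~\ref{dynamical}: consider a weak limit $\P$ of the empirical measures $\frac{1}{N}\sum_{j=1}^{N} \delta_{S^j(\lamob\circ f)}$ on the shift space, and try to establish the analogue of Theorem~\ref{Lmain2}, namely that the coordinate projections are pairwise independent (or at least orthogonal) under $\P$. If this holds, then the spectral measure of $(\lamob(f(n)))$ is Lebesgue, which gives cancellation
$$
\frac{1}{x}\sum_{n \le x} \lamob(f(n)) \tend{x}{+\infty} 0,
$$
as required. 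The multiplicativity of $\lamob$ should enter via a Vinogradov / Vaughan-type identity applied to $\lamob(f(n))$, combined with the identity \eqref{Littlewood3} for the associated Newman--Bourgain polynomials; the hope is that the pairwise independence criterion of Theorem~\ref{KeyTheorem} can be verified from the sieve-theoretic estimates of $\sum_{n\le x} \lamob(f(n))\lamob(f(n+h))$ that are accessible for at least some classes of $f$.

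The main obstacle is precisely the step of verifying those shifted correlations. For $f(x)=x$ one has $\sum \lamob(n) = o(x)$ from the Prime Number Theorem, but already the two-point correlation $\sum \lamob(n)\lamob(n+h)$ is the classical open problem at the heart of Chowla's conjecture, and no unconditional proof is known for any non-linear $f$ beyond averaged or logarithmic versions (Matom\"aki--Radziwi\l\l--Tao). In short, the flatness framework of the paper provides a clean \emph{equivalent} formulation in terms of $L^4$-growth of $L_q$, but converting this to an unconditional bound requires genuinely new arithmetic input about short correlations of $\lamob$ at polynomial arguments; this is where the plan stalls, and why the statement is presented here as a conjecture rather than a theorem. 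Accordingly, I would expect the author not to prove the conjecture, but to use it as a hypothesis and then, via the framework of Theorem~\ref{mainL1} applied to $\lamob$, deduce the estimate \eqref{RH2} and hence the Riemann Hypothesis.
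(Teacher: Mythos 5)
You have correctly identified that this statement is not a theorem of the paper at all: it is Chowla's own conjecture (problem~57 of \cite{Chowla}), stated without proof, and the paper offers no argument for it. Your ``proposal'' is therefore rightly a non-proof, and your diagnosis of where any attack stalls --- the shifted correlations $\sum_{n\le x}\lamob(f(n))\lamob(f(n+h))$, which are exactly the open heart of the matter --- is accurate. One small correction to your closing sentence: the hypothesis the paper actually feeds into its RH deduction is not this polynomial-values conjecture but Conjecture~\ref{ChowlaIII} (the $m$-point correlation, i.e.\ normality, version of Chowla's conjecture); it is that conjecture, via genericity of $\lamob$ for a Bernoulli system and the rotated CLT, that yields the uniform $L^\alpha$-bounds of Theorem~\ref{Chowla} and hence, through Theorem~\ref{RH}, the Riemann Hypothesis. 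Conjecture~\ref{ChowlaI} plays no role in that chain and is recorded purely for context.
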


\begin{Conj}(\cite[problem 56, pp.96]{Chowla})\label{ChowlaII}
Let $\epsilon_1,\epsilon_2,\cdots, \epsilon_g$ be arbitrary numbers each equal to $+1$ or
$-1$, where $g$ is a fixed (but arbitrary) number. Then the equations (in $n$)
$$ \lamob(n + m) = \epsilon_m,~~~~~~~~~~ (1 \leq m \leq g)$$
have infinitely many solutions.
\end{Conj}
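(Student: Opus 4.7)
The plan is to leverage the paper's dictionary between polynomial flatness, pairwise independence, and sign-pattern statistics. Conjecture II is qualitative (every sign pattern of length $g$ occurs infinitely often), but the cleanest route to it is via the stronger density statement
$$\lim_{N\to\infty}\frac{1}{N}\#\bigl\{n\le N : \lamob(n+m)=\epsilon_m,\ 1\le m\le g\bigr\} = 2^{-g},$$
which implies Conjecture II for every choice of $(\epsilon_1,\ldots,\epsilon_g)\in\{-1,+1\}^g$ at once. So the real target is a normality/equidistribution statement for the Liouville function.

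First, I would transplant the paper's set-up from $\pm 1$ sequences to $\lamob$. Form the $L^2$-normalized polynomials
$$L_N(z)=\frac{1}{\sqrt{N}}\sum_{n=1}^{N}\lamob(n)\,z^n,$$
and consider a weak-$*$ limit $\P$ of the empirical measures $\frac{1}{N}\sum_{n\le N}\delta_{S^n\lamob}$ on $\{-1,+1\}^{\Z}$. By the chain of reasoning leading to Theorem \ref{Lmain2} and Theorem \ref{KeyTheorem}, any averaged square-$L^2$-flatness of $(L_N)$ forces the spectral measure of the coordinate $\pi_0$ to be Lebesgue, hence the coordinate projections $(\pi_n)$ to be pairwise independent under $\P$. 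Pairwise independence of $\pm 1$ variables with mean zero already forces the two-point densities $\frac{1}{N}\#\{n\le N:\lamob(n)=\epsilon_0,\ \lamob(n+m)=\epsilon_m\}$ to converge to $\tfrac{1}{4}$, which is Conjecture II in the case $g=2$.

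Second, to reach arbitrary $g$, I would iterate the flatness argument on \emph{shifted product polynomials}. For each non-empty $S=\{m_1<\cdots<m_k\}\subset\{1,\dots,g\}$, introduce
$$L_{N,S}(z)=\frac{1}{\sqrt{N}}\sum_{n=1}^{N}\lamob(n+m_1)\cdots\lamob(n+m_k)\,z^n$$
and attempt to prove the analogous square-$L^2$-flatness. Here multiplicativity of $\lamob$ is the decisive arithmetic input: after factoring out small-prime contributions via a Ramar\'e-type identity, the shifted product behaves like a multiplicative function to which one can apply Hal\'asz' theorem and the Matom\"aki--Radziwi\l{}\l{} short-interval machinery. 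Combining these flatness statements with an inclusion--exclusion over sign patterns converts mutual asymptotic independence of the $\pi_m$'s under $\P$ into the desired density formula.

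The main obstacle is exactly the second step. The paper's $L^4$-method only outputs \emph{pairwise} independence, whereas Conjecture II for general $g$ is equivalent to the vanishing of \emph{all} higher-order correlations of $\lamob$, and this is the full Chowla conjecture, which is open. Unconditionally we have only Tao's logarithmic $g=2$ result, odd-order Tao--Teräväinen averages, and some short-interval refinements, and all of them fall short of the pointwise asymptotic demanded above. So the honest status of my plan is: it reduces Conjecture II to (averaged) square-flatness of the polynomials $L_{N,S}$ for every $S$, and the latter is an arithmetic statement of exactly the same depth as Chowla's conjecture itself. Any real progress has to inject number-theoretic input beyond what the polynomial-flatness dictionary supplies.
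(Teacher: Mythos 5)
There is a genuine gap here, and it is worth being precise about its nature. The statement you were asked to prove is Conjecture \ref{ChowlaII}, which the paper does not prove at all: it is recorded as an open conjecture of Chowla, and the only thing the paper says about it is the one-line observation that it would follow from the stronger Conjecture \ref{ChowlaIII} (normality of $\lamob$). So there is no proof in the paper against which your argument could be matched, and your own proposal, by your candid admission in the final paragraph, does not supply one either: it is a conditional reduction, not a proof.

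Two specific points in your plan deserve flagging. First, your opening step conditions on square $L^2$-flatness of the Liouville polynomials $L_N$ in order to invoke the flatness-implies-pairwise-independence machinery of Theorem \ref{Lmain2} and Theorem \ref{KeyTheorem}. But Theorem \ref{mainL1} is precisely the statement that no sequence of Littlewood-type polynomials is square $L^2$-flat, and Section \ref{ChowlaRH} says explicitly that the Liouville polynomials are not square flat. So the antecedent of your implication is false, and the argument cannot deliver even the $g=2$ case of the conjecture; the two-point correlation $\frac{1}{N}\sum_{n\le N}\lamob(n)\lamob(n+m)\to 0$ does not follow from anything established in the paper. Second, your step toward general $g$ via the shifted product polynomials $L_{N,S}$ is, as you say yourself, equivalent in depth to Conjecture \ref{ChowlaIII}; since the paper already notes that \ref{ChowlaIII} trivially implies \ref{ChowlaII} (normality gives each length-$g$ sign pattern density $2^{-g}$, hence infinitely many occurrences), this reduction adds nothing beyond what is already stated. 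The honest conclusion is that the statement remains open and the proposal is a research programme rather than a proof.
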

This later conjecture holds if the following conjecture (attributed also to Chowla) holds
\begin{Conj}\label{ChowlaIII}
Let $a_1,a_2,\cdots,a_m$ be a $k$ distinct integers. Then, as $N \longrightarrow +\infty$,
$$\sum_{n \leq N}\lamob(n+a_1)\lamob(n+a_2)\cdots\lamob(n+a_m)=o(N).$$
\end{Conj}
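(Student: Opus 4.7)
The plan is to recast Conjecture III as a statement about the dynamical system generated by $\lamob$, and then attempt to import the flatness/independence machinery developed in Sections \ref{proof1} and \ref{dynamical}. Form the subshift $(X_{\lamob}, S)$, where $S$ is the shift on $\{-1,+1\}^{\Z}$ and $X_{\lamob}$ is the orbit closure of $\lamob$. Let $\P$ be any weak-$*$ limit of the empirical measures $\frac{1}{N}\sum_{n=1}^{N}\delta_{S^n \lamob}$. Since $\frac{1}{N}\sum_{n \leq N}\lamob(n+a_1)\cdots\lamob(n+a_m)$ converges (along a subsequence) to $\int \pi_{a_1}\cdots\pi_{a_m}\,d\P$, Conjecture III is equivalent to the assertion that, for every such $\P$ and every finite set $\{a_1,\ldots,a_m\}$ of distinct integers, $\int \pi_{a_1}\cdots \pi_{a_m}\, d\P = 0$; equivalently, the joint law of $(\pi_a)_{a \in \Z}$ under $\P$ is Bernoulli$(\tfrac12,\tfrac12)$.

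Next, I would attempt to recover the $m=2$ case from the paper's main framework. By Theorem \ref{Lmain2}, square $L^2$-flatness of the polynomials $P_q(z)=\frac{1}{\sqrt q}\sum_{n<q}\lamob(n)z^n$ would force pairwise independence of the coordinate projections, giving $\sum_{n\leq N}\lamob(n)\lamob(n+a)=o(N)$ for every $a\neq 0$. To extend to arbitrary $m$, the natural target would be a higher-order flatness statement, namely that the $L^{2m}$-norms of $P_q$ tend to the Gaussian-Rademacher value, matching what a truly random $\pm 1$ sequence would give. By Proposition \ref{Cri2}, this would in turn follow from $\||P_q|^2-1\|_m \to 0$ for each $m \geq 1$, which is exactly the assertion that the spectral measure of $\lamob$ (viewed as a $\pm 1$ valued sequence) is Lebesgue with \emph{all} joint cumulants vanishing.

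To try to verify the required correlation decay arithmetically, I would exploit complete multiplicativity $\lamob(mn)=\lamob(m)\lamob(n)$ via the Vinogradov Type~I/Type~II decomposition, together with short-interval control \`{a} la Matom\"aki--Radziwi\l\l\ to replace long sums by averages of short ones (for which the shifted correlations $\lamob(n+a_1)\cdots\lamob(n+a_m)$ can be handled using Tao's entropy decrement). A second, independent leg of the attack would be to compute $\|P_q\|_4^4$ directly using the identity \eqref{four1} and the explicit structure of the autocorrelations $c_k = \sum_{n}\lamob(n)\lamob(n+k)$, reducing the whole problem to the single ``pair correlation'' cancellation $\sum_{n\leq N}\lamob(n)\lamob(n+k)=o(N)$, and then bootstrap from pairs to tuples via the dynamical argument of Theorem \ref{KeyTheorem}.

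The main obstacle will be the step from pairwise to mutual independence: the paper's machinery, which crucially uses the $L^4$-norm identity in Lemma \ref{classical}, only delivers the $2$-point statement, and going to $m$-point correlations forces one to control $L^{2m}$ norms of the Liouville polynomials. Unlike the $\pm 1$ flat polynomial question, there is no arithmetic gadget forcing such higher moments to take the Gaussian value; the only known shortcut is already the Chowla conjecture itself. In particular, the bare pair correlation $\sum_{n\leq N}\lamob(n)\lamob(n+1)=o(N)$ is itself an open problem resistant to bilinear sieve methods, and any attempt to push the paper's framework further would collide with this barrier. Thus, while the reduction to a dynamical/flatness statement is clean, the heart of the matter is precisely the input that the paper's machinery cannot provide.
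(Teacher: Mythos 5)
The statement you are asked about is Conjecture \ref{ChowlaIII}, which the paper does not prove and cannot prove: it is the (open) Chowla conjecture itself, and the paper only uses it as a hypothesis (to deduce RH via Theorems \ref{RH} and \ref{Chowla}). So there is no proof in the paper to match yours against, and your text is not a proof either --- it is a reformulation plus a program, and you yourself concede at the end that the essential input is missing. That concession is correct, but it means the proposal has a genuine, unfillable gap rather than a complete argument.

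Two concrete problems with the route you sketch. First, the appeal to Theorem \ref{Lmain2} to obtain the $m=2$ case is vacuous: that theorem is conditional on the square $L^2$-flatness of the Littlewood polynomials built from $\lamob$, and the paper's own main result (Theorem \ref{mainL1}) asserts that \emph{no} sequence of Littlewood polynomials is square $L^2$-flat. Hence the hypothesis of Theorem \ref{Lmain2} can never be satisfied by the Liouville polynomials, and no pairwise-independence conclusion can be extracted this way; indeed the unweighted two-point correlation $\sum_{n\le N}\lamob(n)\lamob(n+1)=o(N)$ remains open (Tao's entropy decrement method yields only the logarithmically averaged version, which does not imply the Ces\`aro-averaged statement in Conjecture \ref{ChowlaIII}). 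Second, even granting pairwise independence, nothing in the paper's $L^4$-based machinery (Lemma \ref{classical}, Theorem \ref{KeyTheorem}) upgrades two-point cancellation to $m$-point cancellation: pairwise independence of a stationary $\pm 1$ process says nothing about triple correlations, as the paper's own references \cite{R1988}, \cite{Bradely2009} illustrate. The reduction of Conjecture \ref{ChowlaIII} to the statement that every weak-$*$ limit $\P$ of the empirical measures makes the coordinates i.i.d.\ Bernoulli is a correct and standard reformulation, but it is an equivalence, not a proof.
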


\noindent{}Conjecture \ref{ChowlaIII} say that the Liouville function is {\textbf{normal}}, that is, 
$$\frac{\Big|\Big\{ j\in [1,N]~~:~~\lamob(j+n)=\epsilon_n,~~~n=1,\cdots,k\Big\}\Big|}{N} \tend{N}{+\infty}\frac1{2^k}.$$
However, Conjecture \ref{ChowlaII}
say that the Liouville function is \textbf{weak normal}, that is, the number of solutions is infinite.\\

Let us bring to the attention of the reader that to the best knowledge of the author there is no connection known between the 
popular Chowla conjecture \ref{ChowlaIII} and the Riemann Hypothesis unless the trivial case $k=1$. We further notice 
that N.Ng in \cite{Ng} proved that under a more strong conjecture (called M\"{o}bius $s$-tuple conjecture), 
the distribution of $M(x+h)-M(x)$ is normal, where $M(x)$ is the Mertens function given by $M(x)=\sum_{n \leq x}\mob(n)$. 
P. Sarnak wrote about his feeling regarding Chowla conjecture \ref{ChowlaIII} \cite{SarnakL}: 
``I don’t know of any reason to be skeptical about Chowla’s
Conjecture, after all if it is false it would indicate some hidden structure in the integers that has not been observed.'' \\ 

Our principal goal in this section is to compute exactly the $L^\alpha$-norms of  
%is to extended the result of Jensen-Jensen and H{\o}holdt \cite{Hoholdt-JJ} to the case of 
the trigonometric polynomials with Liouville or M\"{o}bius coefficients. 
%We remind that 
%this result say that if the frequencies of $+1$ and $-1$ are not balanced then the $L^4$ of the polynomials
%$(P_q)$ goes to $\infty$ as $q \rightarrow +\infty.$ 
Of course %our result allows us to assert  that the trigonometric polynomials with Liouville or M\"{o}bius coefficients
those polynomials are not square flat by our main result, that is,  
Erd\"{o}s conjectures holds for the trigonometric polynomials with Liouville or M\"{o}bius coefficients. 
But, in connection with the Riemann Hypothesis,  we have the following
\begin{Th}\label{RH}
 Assume that for any $\alpha \geq 1$, we have
$$\Big\|\frac{1}{\sqrt{N}} \sum_{j=1}^{N}\lamob(j) z^j\Big\|_\alpha <+\infty,$$
and 
$$\Big\|\frac{1}{\sqrt{N}} \sum_{j=1}^{N}\mob(j) z^j\Big\|_\alpha <+\infty.$$
Then the Riemann Hypothesis holds.
\end{Th}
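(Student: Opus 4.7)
The plan is to extract, from the hypothesized uniform control of the $L^\alpha$-norms of $P_N^{\lamob}(z)=\frac{1}{\sqrt{N}}\sum_{j=1}^{N}\lamob(j)z^j$ and $P_N^{\mob}(z)=\frac{1}{\sqrt{N}}\sum_{j=1}^{N}\mob(j)z^j$, a bound on the point values at $z=1$, and then invoke the classical equivalences \eqref{RH1} and \eqref{RH2}. The driving observation is the tautology
\[
\sum_{n=1}^{N}\lamob(n)=\sqrt{N}\,P_N^{\lamob}(1), \qquad \sum_{n=1}^{N}\mob(n)=\sqrt{N}\,P_N^{\mob}(1),
\]
so any estimate of the form $|P_N^{\bullet}(1)|=O_\varepsilon(N^\varepsilon)$ translates directly into the Littlewood criterion for RH.

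The key technical input is a Nikolskii-type inequality for analytic trigonometric polynomials of degree at most $N$: for every $\alpha\geq 1$,
\[
\|P\|_\infty\;\leq\;C\,N^{1/\alpha}\|P\|_\alpha,
\]
with $C$ an absolute constant. I would prove this along the standard route. Bernstein's inequality $\|P'\|_\infty\leq N\|P\|_\infty$ implies that, if $\theta_0$ realizes the maximum of $|P(e^{i\theta})|$, then $|P(e^{i\theta})|\geq \tfrac12\|P\|_\infty$ on the arc $|\theta-\theta_0|\leq \tfrac{1}{2N}$; integrating the lower bound over that arc gives $\|P\|_\alpha^\alpha\gtrsim N^{-1}\|P\|_\infty^\alpha$, whence the claimed Nikolskii inequality with a constant $C$ bounded uniformly in $\alpha\geq 1$. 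Reading the theorem's hypothesis as the only sensible one, $K_\alpha:=\sup_N\|P_N^{\bullet}\|_\alpha<+\infty$, and applying the inequality at $z=1$ yields
\[
|P_N^{\lamob}(1)|\;\leq\;C\,K_\alpha\,N^{1/\alpha},\qquad |P_N^{\mob}(1)|\;\leq\;C\,K_\alpha\,N^{1/\alpha}.
\]

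Multiplying by $\sqrt{N}$ gives $\bigl|\sum_{n\leq N}\lamob(n)\bigr|\leq CK_\alpha N^{1/2+1/\alpha}$ and likewise for $\mob$. For an arbitrary $\varepsilon>0$, choosing any $\alpha>1/\varepsilon$ delivers the bound $O(N^{1/2+\varepsilon})$; by \eqref{RH1}, respectively \eqref{RH2}, this is equivalent to the Riemann Hypothesis. The main subtlety I expect is not analytic but interpretive: the hypothesis $\|\cdot\|_\alpha<+\infty$ must be read as uniformity in $N$ (as is in fact the output of the preceding sections under Chowla's conjecture), since for a single fixed polynomial the $L^\alpha$-norm is trivially finite. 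Beyond this interpretive point, the argument is short and requires no further arithmetic input past the Littlewood--Chowla equivalences already recorded as \eqref{RH1}-\eqref{RH2}.
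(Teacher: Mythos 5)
Your argument is correct and reaches the same destination by what is, at bottom, the same mechanism as the paper: everything reduces to the pointwise bound $|P_N^{\bullet}(1)|\leq C_\alpha N^{1/\alpha}\|P_N^{\bullet}\|_\alpha$, which upon multiplying by $\sqrt{N}$ and letting $\alpha\to\infty$ feeds into the Littlewood--Chowla equivalences \eqref{RH1}--\eqref{RH2}. The one genuine difference is the tool used to justify that pointwise bound. The paper invokes the Marcinkiewicz--Zygmund interpolation inequality (Theorem \ref{MZ}), keeping only the single node $z=1$ from the discrete average $\frac{A_\alpha}{N}\sum_j|P(\xi_{N,j})|^\alpha\leq\|P\|_\alpha^\alpha$; you instead derive a Nikolskii inequality $\|P\|_\infty\leq CN^{1/\alpha}\|P\|_\alpha$ from Bernstein's inequality and integration over an arc of length $\sim 1/N$ around the maximum. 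At the single point $z=1$ these two estimates have identical content, so neither buys more than the other arithmetically; your route is self-contained and elementary (no appeal to Zygmund's interpolation theory), and your explicit remark that the constant is uniform in $\alpha\geq 1$ is a point the paper leaves implicit. The paper also runs the $\mob$ case contrapositively (assuming RH fails and deriving blow-up of the norms) and the $\lamob$ case directly, whereas you argue directly for both; this is cosmetic. Finally, you are right that the hypothesis must be read as uniform boundedness of the norms in $N$ --- the paper's own proof uses exactly this reading when it writes the bound $\leq c_\alpha$ with $c_\alpha$ independent of $N$ --- so your interpretive caveat is consistent with the intended statement.
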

\begin{proof} Assume that the Riemann Hypothesis does not holds. Then, according to Littlewood's theorem \cite[p.371]{Titchmarsh} 
(see also \cite[p.261]{Edwards}), there exist $c>0$ and $\epsilon>0$ such that for infinitely many positive integers $N$, we have
$|M(N)| \geq c. N^{\frac{1}{2}+\epsilon}.$ Let $\alpha>1$ such that $\alpha \epsilon>1$. Then, by Marcinkiewicz-Zygmund inequalities,
we have
%\begin{eqnarray*}
% \Big\|\frac{1}{\sqrt{N}} \sum_{j=1}^{N}\lamob(j) z^j\Big\|_4^4 &\geq& \frac{\Big|\ds \sum_{j=1}^{N}\lamob(j)\Big|^4}{N^3}\\
%&\geq& C_{\varepsilon}.\frac{N^{2+4\varepsilon}}{N^3}=N^{4\varepsilon-1},
%\end{eqnarray*}
%and 
\begin{eqnarray*}
 \Big\|\frac{1}{\sqrt{N}} \sum_{j=1}^{N}\mob(j) z^j\Big\|_\alpha^\alpha &\geq& A_\alpha 
 \frac{\Big|\ds \sum_{j=1}^{N}\mob(j)\Big|^\alpha}{N^{\frac{\alpha}{2}+1}}\\
&\geq& C_\alpha.\frac{N^{\frac{\alpha}{2}+\alpha\varepsilon}}{N^{\frac{\alpha}{2}+1}}=N^{\alpha\varepsilon-1},
\end{eqnarray*}
%Taking $\varepsilon>\frac{1}{4}$ and 
Letting $N \longrightarrow +\infty$, we conclude that 
%$$\Big\|\frac{1}{\sqrt{N}} \sum_{j=1}^{N}\lamob(j) z^j\Big\|_4 \longrightarrow +\infty,$$
%and 
$$\Big\|\frac{1}{\sqrt{N}} \sum_{j=1}^{N}\mob(j) z^j\Big\|_\alpha \tend{N}{+\infty} +\infty.$$
Applying Batman-Chowla trick, the same conclusion can be drawn for $\lamob$. But, we can also give a direct proof. Indeed,
 assume that for any $\alpha \geq 1$, we have
$$\Big\|\frac{1}{\sqrt{N}} \sum_{j=1}^{N}\lamob(j) z^j\Big\|_\alpha <+\infty.$$
Then, by Marcinkiewicz-Zygmund inequalities, for any $\alpha>1$, there exist $A_\alpha$ such that 
$$A_\alpha 
 \frac{\Big|\ds \sum_{j=1}^{N}\lamob(j)\Big|^\alpha}{N^{\frac{\alpha}{2}+1}} \leq c_\alpha,$$
where $c_\alpha$ is some positive constant. This gives 
$$\Big|\ds \sum_{j=1}^{N}\lamob(j)\Big| \leq C_\alpha N^{\frac{1}{2}+\frac{1}{\alpha}}.$$
Since $\alpha$ is arbitrary, it follows, with the help of \eqref{RH2}, that RH holds.   
This accomplishes the proof of the theorem.    
\end{proof}

We will now investigate the flatness issue in the case of polynomials with Liouville and M\"{o}bius coefficients under the assumption that 
Chowla conjecture \ref{ChowlaIII} holds. More precisely,  we have the following result.
\begin{Th}\label{Chowla}
 Assume that Chowla conjecture \ref{ChowlaIII} holds and let $(X_{\lamob},S,\P)$ be the subshift generated by 
the Liouville function. Then, for any $p \geq 1$, we have 
$$\int \Big\|\frac{1}{\sqrt{N}}\sum_{j=1}^{N}x_j z^j\Big \|_p^p d\P(x) \tend{N}{+\infty}\Gamma\Big(\frac{p}{2}+1\Big),$$
in particular
$$\int \Big\|\frac{1}{\sqrt{N}}\sum_{j=1}^{N}x_j z^j \Big\|_4^4 d\P(x) \tend{N}{+\infty}\frac{3 \sqrt{\pi}}{4}.$$
\end{Th}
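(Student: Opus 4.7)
The plan is to exploit the fact that Chowla's Conjecture \ref{ChowlaIII} forces the subshift $(X_{\lamob},S,\P)$ to be Bernoulli and then to reduce the integral to a classical central limit computation for random trigonometric polynomials with Rademacher coefficients.

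First, I would verify that under Conjecture \ref{ChowlaIII}, for any weak$^{*}$-limit $\P$ of the empirical measures $\frac{1}{N}\sum_{n=1}^{N}\delta_{S^{n}\lamob}$, the canonical projections $\pi_{j}(x)=x_{j}$ form an i.i.d.\ family of symmetric Rademacher variables. Indeed, Conjecture \ref{ChowlaIII} states that every correlation $\frac{1}{N}\sum_{n\leq N}\lamob(n+a_{1})\cdots\lamob(n+a_{m})$ is $o(1)$, and since $\lamob^{2}=1$ one recovers all joint moments of $(\pi_{j_{1}},\dots,\pi_{j_{m}})$. These moments factorise, which is exactly the characterising property of the product Bernoulli measure on $\{-1,+1\}^{\Z}$; this is precisely the ``normality'' interpretation discussed in the introduction of this section.

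Second, by Fubini,
$$
\int \Big\|\tfrac{1}{\sqrt{N}}\sum_{j=1}^{N}x_{j}z^{j}\Big\|_{p}^{p}\,d\P(x)=\int_{S^{1}}E_{\P}\!\left[|P_{N}(z)|^{p}\right]dz,
$$
where $P_{N}(z)=\frac{1}{\sqrt{N}}\sum_{j=1}^{N}\pi_{j}(x)z^{j}$. For fixed $z=e^{2\pi i\theta}\in S^{1}$ with $\theta$ irrational, the pair $\bigl(\re P_{N}(z),\im P_{N}(z)\bigr)$ is a sum of i.i.d.\ bounded vectors with covariance matrix tending to $\tfrac{1}{2}I$, since $\frac{1}{N}\sum_{j=1}^{N}\cos^{2}(2\pi j\theta)\to \tfrac12$, $\frac{1}{N}\sum_{j=1}^{N}\sin^{2}(2\pi j\theta)\to\tfrac12$ and $\frac{1}{N}\sum_{j=1}^{N}\sin(4\pi j\theta)\to 0$. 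The multivariate Lindeberg CLT then gives $P_{N}(z)\xrightarrow{d} Z$ where $Z$ is standard complex Gaussian; consequently $|P_{N}(z)|^{2}$ converges in distribution to an exponential variable of mean one, and
$$
E_{\P}\!\left[|P_{N}(z)|^{p}\right]\xrightarrow[N\to\infty]{}\int_{0}^{\infty}u^{p/2}e^{-u}\,du=\Gamma\!\left(\tfrac{p}{2}+1\right)
$$
pointwise on a set of full Lebesgue measure in $S^{1}$.

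Third, to commute the limit with the $dz$-integration, I would apply Khintchine's inequality separately to the real and imaginary parts: for every $r\geq 2$ and every $z\in S^{1}$,
$$
E_{\P}\!\left[|P_{N}(z)|^{r}\right]\leq C_{r}\Bigl(\tfrac{1}{N}\sum_{j=1}^{N}|z^{j}|^{2}\Bigr)^{r/2}=C_{r}.
$$
Choosing any $r>p$ bounds $\|\,|P_{N}(z)|^{p}\,\|_{L^{r}(\P\otimes dz)}$ uniformly in $N$, which yields uniform integrability via the de~la~Vall\'ee--Poussin criterion; combined with the pointwise convergence this gives the stated limit $\Gamma(p/2+1)$. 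The specialisation to $p=4$ then follows by plugging in.

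The main obstacle is Step~1: one has to package Chowla's conjecture into the clean statement that every weak$^{*}$-limit measure $\P$ equals the Bernoulli measure. Once that identification is made, the remainder is a standard Fubini-CLT-Khintchine sandwich, essentially parallel to the pairwise-independence argument of Section~\ref{dynamical}, the difference being that here full independence (rather than only pairwise) is available, which is what upgrades the second-moment identity used there to the full $\Gamma$-function prediction for every $p$.
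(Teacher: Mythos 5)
Your proof is correct, and it reaches the same endpoint (the Borwein--Lockhart limit $\Gamma(p/2+1)$) by a slightly different route through the central limit step. The paper first invokes Sarnak's theorem to conclude from Chowla's Conjecture \ref{ChowlaIII} that $(X_{\lamob},S,\P)$ is Bernoulli, and then applies the rotated CLT of Peligrad--Wu, which gives convergence in distribution of $R_N(x,z)$ to the standard complex Gaussian \emph{jointly} under the product measure $dz\otimes\P$; the Standard Moment Theorem together with the Khintchine/Marcinkiewicz--Zygmund uniform integrability bound then yields convergence of the $p$-th moments. You instead derive the Bernoulli identification directly from the factorisation of joint moments (which is fine, since the coordinates are bounded and moments determine the law), and you replace the Peligrad--Wu theorem by a fixed-$z$ multivariate Lindeberg CLT for each irrational angle, using Weyl equidistribution to compute the limiting covariance $\tfrac12 I$, followed by Fubini and dominated convergence with the uniform Khintchine bound $E_{\P}[|P_N(z)|^r]\le C_r$. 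Your version is more elementary and self-contained --- it needs no external CLT for stationary processes --- but it leans essentially on the full independence obtained in your first step; the paper's appeal to Peligrad--Wu is the more robust choice, since that theorem applies to general stationary ergodic sequences and would survive a weakening of the independence hypothesis. Both arguments use the same uniform-integrability mechanism to upgrade convergence in distribution to convergence of moments, so the two proofs agree in all essential respects.
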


\begin{proof}We proceed in the same manner as in the dynamical proof (proof of Theorem \ref{pairwise-ind}). Assume that 
Chowla conjecture \ref{ChowlaIII} holds. Then, by Sarnak's theorem \cite[p.10]{SarnakL}, it follows that $(X_{\lamob},S,\P)$ 
is a Benouilli system and $\lamob$ is a generic point (see also  Corollary 4.9 from \cite{elabKLR}). We can 
thus apply the rotated Central Limit theorem of   
Peligrad-Wu's from \cite{Wu} (see also \cite{Cohen-Conze}) to obtain that $R_q(x,z)\setdef \frac{1}{\sqrt{N}}\sum_{j=1}^{N}x_j z^j$ converge in distribution under 
$dz \otimes \P$ to the complex Gaussian  distribution ${\mathcal{N}}_{\mathbb {C}}(0,\sigma)$ on ${\mathbb {C}}$ with variance 
$\sigma^2=1$, that is,
$$ (dz \otimes\P)\Big\{ R_N \in A \Big\} \tend{N}{+\infty}\int_A \frac1{\pi}e^{-|\xi|^2} d\xi.$$
Denote by $\mathcal{D}(R_q(x,z))$ the distribution of $R_q(x,z)$ under $dz \otimes \P$. Since
$||R_N||_{L^2(dz \otimes \P)} \leq 1,$ the random variables $R_N(x,z)$ are $L^p$-uniformly integrable, $p \geq 1$, 
by Marcinkiewicz-Zygmund inequality \cite{MZproba} or by Khintchine inequalities \cite[Chap. V,pp.213]{Zygmund}. This combined 
with the Standard Moment Theorem (SMT) \cite[pp.100]{Chung} gives 
\begin{eqnarray*}
 ||R_N(x,z)||_{L^p(dz \otimes \P)}^p &=&\int \big\|R_N(x,z)\big\|_p^p d\P\\
  &=&\int |w|^p d\mathcal{D}(R_N(x,z)) \\   
&&\tenddw[0.5cm]{ q \longrightarrow +\infty } 
\\ 
&&\int |w|^p \frac1{\pi}e^{-|w|^2} dw 
 %           & \\
  %          &q \longrightarrow +\infty \\
  %          & \\
  %{Higher}  \\
%&\quad& \int |w|^p \frac1{\pi}e^{-|w|^2} dw ,
\end{eqnarray*}
where $dw$ is the usual Lebesgue measure on ${\mathbb {C}}$, that is, $dx\cdot dy=rdrd\theta$. To complete the proof, 
it suffices to remark that we have 
\begin{eqnarray*}
\int |w|^p \frac1{\pi}e^{-|w|^2} dw&=&2\int_{0}^{+\infty} r^{p+1}e^{-r^2}dr\\
&\stackrel{s=r^2}{=}&\int_{0}^{+\infty} s^{\frac{p}{2}} e^{-s}ds\\
&=&\Gamma\Big(\frac{p}{2}+1\Big). 
\end{eqnarray*} 
We remind that the gamma function $\Gamma$ is defined by 
$$\Gamma(z)=\int_{0}^{+\infty}x^{z-1}e^{-x} dx,\quad \quad \textrm{with} \quad \re(z)>0.$$
We thus obtain a new proof of theorem of Borwein-Lockhart \cite{Browein-Lo} and end the proof of the theorem.
%Then, by Cauchy-Schwarz inequality, we have $\big\|\phi_q(z)\big\|_2 \leq 1.$ Whence, $(\phi_q)$ is uniformly integrable, we can apply again SMT to get
%$$\big\|\phi_q\big\|_1 \tend{q}{\infty} \frac{\sqrt{\pi}\sigma}2.$$

\end{proof}

Combining Theorem \ref{RH} and Theorem \ref{Chowla}, we obtain 

\begin{Cor}If Chowla conjecture \ref{ChowlaIII} holds then RH holds.
\end{Cor}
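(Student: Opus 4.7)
The plan is to chain Theorem~\ref{Chowla} with Theorem~\ref{RH}. Theorem~\ref{RH} reduces RH to showing that the normalized polynomials $P_N(z)=\tfrac{1}{\sqrt N}\sum_{j=1}^N\lamob(j)z^j$ and their M\"{o}bius analogues have $L^\alpha$-norms uniformly bounded in $N$ for every $\alpha\ge 1$. Theorem~\ref{Chowla}, under Conjecture~\ref{ChowlaIII}, delivers only the averaged statement $\int \|R_N\|_p^p\, d\P \to \Gamma(p/2+1)$ in the Bernoulli model, so the real work is to upgrade this to a pointwise bound at the specific sequence $\lamob$.

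My strategy is the method of moments carried out directly at $\lamob$. For each integer $k\ge 1$, expand $|R_N(\lamob,z)|^{2k}=(|R_N(\lamob,z)|^2)^k$ and integrate in $z$ to obtain
\[
\|R_N(\lamob,\cdot)\|_{2k}^{2k}=
\frac{1}{N^k}\sum_{\substack{s_1,\dots,s_k,\,t_1,\dots,t_k\in\{1,\dots,N\}\\ s_1+\cdots+s_k\,=\,t_1+\cdots+t_k}}
\lamob(s_1)\lamob(t_1)\cdots\lamob(s_k)\lamob(t_k).
\]
Grouping tuples by the multiplicity pattern of their indices, each group reduces, after a shift of the summation variable, to a Ces\`aro average of a Liouville multi-correlation $\tfrac{1}{N}\sum_n \lamob(n+a_1)\cdots\lamob(n+a_m)$. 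By Conjecture~\ref{ChowlaIII}, any such average with $m\ge 1$ distinct shifts vanishes as $N\to\infty$, so the only surviving contributions come from configurations in which every index of $\lamob$ appears with even multiplicity. A Wick-type bookkeeping then identifies the survivors with the $k!$ bijections $s_i\leftrightarrow t_{\sigma(i)}$, each contributing $1+o(1)$ because $\lamob^2\equiv 1$ and the number of $k$-tuples of distinct indices in $\{1,\dots,N\}$ is $N^k+O(N^{k-1})$. This yields $\|R_N(\lamob,\cdot)\|_{2k}^{2k}\to k!=\Gamma(k+1)$ pointwise at $\lamob$.

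Since $dz$ is a probability measure on $S^1$, H\"{o}lder's inequality gives $\|R_N(\lamob,\cdot)\|_\alpha\le \|R_N(\lamob,\cdot)\|_{2k}$ whenever $\alpha\le 2k$, so $\bigl(\|R_N(\lamob,\cdot)\|_\alpha\bigr)_N$ is bounded for every $\alpha\ge 1$. The same argument, applied to the M\"{o}bius polynomials and using that Chowla for $\lamob$ propagates to the required multi-correlation vanishing for $\mob$ via the Bateman--Chowla trick cited in the text, handles $\mob$; Theorem~\ref{RH} then delivers RH. The hardest point in this plan will be the control of the ``non-pairing'' configurations: there are $O(N^{k-1})$ of them and each must contribute $o(1)$ on average, which needs a mild uniformity in the shift parameters beyond the term-by-term Chowla vanishing. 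This uniformity is supplied, under Chowla, by the Sarnak result already used in the proof of Theorem~\ref{Chowla}: $\lamob$ is a generic point for the Bernoulli measure $\P$, so Ces\`aro averages of continuous cylinder functions along the orbit of $\lamob$ agree with their $\P$-integrals, which legitimises taking the limit termwise in the moment expansion.
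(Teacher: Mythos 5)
Your route is genuinely different from the paper's. The paper argues by contradiction: if RH fails, Theorem \ref{RH} (really its contrapositive) produces a subsequence along which $\bigl\|N^{-1/2}\sum_{j\le N}\mob(j)z^j\bigr\|_\alpha\to\infty$, and this single-point blow-up is then transferred to the integrated norm $\int\|R_N\|_\alpha^\alpha\,d\P$ over the subshift via genericity of $\lamob$ (the ``standard argument'' of Lesigne--Petersen), contradicting Theorem \ref{Chowla}. You instead try to prove the positive statement --- uniform boundedness of the moments evaluated at the point $\lamob$ itself --- by a direct moment expansion, bypassing Theorem \ref{Chowla} entirely. That would be cleaner if it worked, but it has a genuine gap at exactly the step you flag, and the patch you propose does not close it.

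The constraint in your expansion is $s_1+\cdots+s_k=t_1+\cdots+t_k$, not equality of the multisets $\{s_i\}$ and $\{t_j\}$, and the number of solutions is of order $N^{2k-1}$ against a normalization of $N^{k}$; the pairing configurations account for only about $N^{k}$ of them. Already for $k=2$ the expansion reads $\|R_N\|_4^4=N^{-2}\sum_{|h|<N}|c_h|^2$ with $c_h=\sum_{n}\lamob(n)\lamob(n+h)$: the term $h=0$ contributes $1$, and what must be shown is $\sum_{h\neq0}|c_h|^2=O(N^2)$, whereas the trivial bound is of order $N^3$. Conjecture \ref{ChowlaIII} gives $c_h=o(N)$ for each \emph{fixed} $h$, and genericity of $\lamob$ for $\P$ likewise controls only Ces\`aro averages of \emph{fixed} cylinder functions, i.e.\ correlations with fixed shifts; neither supplies the uniformity in $h$ ranging up to $N$ that your sum requires. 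Worse, the estimate $\sum_{h\neq0}|c_h|^2=O(N^2)$ is essentially equivalent to the $L^4$-boundedness you are trying to establish, so appealing to it at this point is circular; the same obstruction recurs for every $k$, with the non-pairing solutions outnumbering the normalization by a factor $N^{k-1}$. A secondary weak point: the Bateman--Chowla trick relates the \emph{partial sums} of $\lamob$ and $\mob$ (the $m=1$ correlation); it does not by itself transfer the vanishing of higher multi-correlations from $\lamob$ to $\mob$, which your treatment of the M\"{o}bius polynomials assumes.
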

 \begin{proof}
 We proceed par contradiction. So suppose that Chowla Conjecture \ref{ChowlaIII} holds and  
 %We proceed by contraposition.
 %Suppose that 
 RH does not holds. Then, by Theorem \ref{RH}, there is $\alpha \geq 1$ and subsequence $(N_n)_{n \geq 1}$ 
 (which we still denoted by $N$ for simplicity) such that we have
 $$\Big\|\frac{1}{\sqrt{N}} \sum_{j=1}^{N}\mob(j) z^j\Big\|_\alpha \tend{N}{+\infty} +\infty.$$
This combined with a standard argument (see for example \cite{Lesigne}), yields
$$\int \Big\|\frac{1}{\sqrt{N}}\sum_{j=1}^{N}x_j z^j\Big \|_\alpha^\alpha d\P \tend{N}{+\infty}+\infty,$$
%by Theorem \ref{Chowla}, we have
%$$\int \Big\|\frac{1}{\sqrt{N}}\sum_{j=1}^{N}x_j z^j\Big \|_4^4 d\P \tend{N}{+\infty}\frac{3 \sqrt{\pi}}{4}.$$
%But, if we assume also that the Riemann Hypothesis holds. Then, according to Theorem \ref{RH} combining with a standard argument 
%(see for example \cite{Lesigne}), we have
%$$\int \Big\|\frac{1}{\sqrt{N}}\sum_{j=1}^{N}x_j z^j\Big \|_4^4 d\P \tend{N}{+\infty}+\infty,$$
%which is impossible and the proof is complete.    
which is impossible in view of Theorem \ref{Chowla} and the proof of corollary is complete.
 \end{proof}

\section{Remarks and open questions}\label{Remarks}
\begin{enumerate}[(i)]
 \item The estimation of $L^4$-norm of Dirichlet kernel $\sqrt{q}D_q$ can be obtained as a corollary of the following more general 
estimation of $L^p$ norm of Dirichlet kernel established in \cite[Lemma 3]{Anderson}
$$\big\|\sqrt{q}.D_q\big\|_p^p{~~\equi{+\infty}~~} \Big(\frac{2}{\pi}\int_{0}^{+\infty}
\Big|\frac{\sin(x)}{x}\Big|^p dx\Big).q^{p-1},~~~~~~~p>1.$$
Indeed, for $p=4$, we have
$$\int_{0}^{+\infty}\Big|\frac{\sin(x)}{x}\Big|^4 dx=\frac{\pi}{3}.$$
\item  Notice that the argument $(GJJH)$ generalized Jensen-Jensen and H{\o}holdt's result (Proposition \ref{JJH}.).
We further obtain in our last argument a generalization of this result to the high degree.
\item Some of our arguments are valid if we assume that $(P_q)$ is $L^\alpha$-flat, $\alpha>1$. However, we need some new ideas
to tackle the problem of $L^\alpha$-flatness in the class of Littlewood, for any $\alpha>1$. As a consequence, it is natural to ask the following question  about the constant $B_\alpha$ 
in the Marcinkiewicz-Zygmund inequalities

$$\textrm{~~Is~~} \frac{2B_\beta^{\alpha-1}}{\pi}\int_{0}^{+\infty}\Big|\frac{\sin(x)}{x}\Big|^\alpha dx>1?$$
where $\beta$ is the conjugate of $\alpha$.\\

This question seems to be related the problem about the sharp constants in the Marcinkiewicz-Zygmund inequalities raised 
in \cite{Lubinsky}.
\item Let $(\eta_j)_{j \geq 0} \in \{0,1\}^\N$. Then, according to P. Cohen's heuristic argument, 
if the sequence $\Big(\ds \int \Big|\sum_{j=0}^{q-1}\eta_j z^j\Big| dz\Big)$ satisfies  
$\ds \int \Big|\sum_{j=0}^{q-1}\eta_j z^j\Big| dz\sim c \sqrt{m_q}$, where  $m_q=\sum_{j=0}^{q-1}\eta_j$, and $c$ is absolute constant, 
then the density of $(\eta_j)_{j \geq 0}$ is zero, that is, the set $\big\{j~~:~~\eta_j=1\big\}$ has density zero. 
The answer to this question is negative. Indeed, 
applying Fukuyama's construction \cite{Fukuyama}, it can be shown that there is a sequence 
$(\eta_j)_{j \geq 0}$ with density $\frac{1}{2}$ and for which the Salem-Zygmund CLT holds.  However, in our setting, 
if we  assume that the sequence of Littlewood polynomials 
is almost everywhere flat then the associated sequence $(\eta_j)$ verify 
$$\Big|\frac{1}{\sqrt{m_q}}\sum_{j=0}^{q-1}\eta_jz^j\Big| \xrightarrow[q \rightarrow +\infty]{\textrm{a.e.}} \frac{\sqrt{2}}{2}.$$
Furthermore, it is proved in \cite{elabdal-little} that if the sequence $(\eta_j)$ generated a flat analytic polynomials in the a.e.
sense the its density is zero.\\

According to this, let us introduce the following notion 
\begin{Def}\label{def1}
A sequence $f_n, n=1.2.\cdots$ of complex valued functions on $S^1$ is said to be $c$-flat a.e. ($dz$) if the sequence of $\mid f_n\mid, n=1,2,\cdots$ of its absolute values  
converges to $c$ a.e. ($dz$). A 1-flat sequence (a.e. ($dz$)) is called  a.e. ($dz$) flat sequence. In case convergence to $c$ is uniform we say that the sequence is $c$- ultraflat. 
A 1-ultraflat sequence is simply called ultraflat.
\end{Def}
Obviously the Dirichlet kernel allows us to produce a sequence of $0$-flat a.e. polynomials. In \cite{elabdal-Banach},  
it is shown  that there is $1$-flat a.e. polynomials from the class of Newman-Bougrain. Those sequences
of polynomials are the only known $c$-flat polynomials from the Newman-Bougrain's class. A natural question is to ask if there is a sequence of polynomials form the Newman-Bourgain class $c$-flat a.e. with $c \in ]0,1[$.
\item Notice that in our proof the square flatness implies that there is a subset $H$  
of non-negative integer with density $\frac{1}{2}$ and for which for any 
$r \geq 3$, $H \mod r=\big\{0,\cdots,r-1\big\}$ and it is equipped with uniform distribution. One may ask if such subset exists. 
The answer is yes. Indeed, let $T_\alpha : \theta \in S^1 \mapsto \theta+\alpha \mod 1,$
$\alpha$ is irrational. Let $H$ be the sequence of return time of $0$ to $(0,\frac{1}{2}]$ under $T_\alpha$. Obviously the density of 
$H$  is $\frac12$. Thanks to the ergodic theorem of Birkhoff. It is also easy to see that $H$ does the job. Thanks to the 
Furstenberg's disjointness of $T_\alpha$ from any rational rotation. Form this, we see  
that for any $d \in ]0,1[$ there exists a subset $H$ of density $d$ and for which for any 
$r \geq 3$, $H \mod r=\big\{0,\cdots,r-1\big\}$ and it is further equipped with uniform distribution. One may ask if it is possible 
to give a topological dynamical proof of this fact, specially for the case $H$ of density zero. 
\item Let us make a connection between our result on Chowla conjecture \ref{ChowlaIII}, RH and 
the heuristic argument of Denjoy \cite{Donjoy}. Let $\varepsilon$ be a positive integer and let $\alpha>1$ such that $\alpha \varepsilon>1$. Assume 
that Chowla conjecture \ref{ChowlaIII} holds. Then,
again, by Marcinkiewicz-Zygmund inequality \cite{MZproba} or by Khintchine inequalities \cite[Chap.V,pp.213]{Zygmund}, we have that 
$R_N(x)=\sum_{j=1}^{N}\pi_0 \circ S^j(x)$ is uniformly $L^\alpha$-integrable. We further have that $R_N(x)$  converge in 
distribution to the normal distribution by the classical CLT. Thus once again we can apply the Standard Moment Theorem to get that
$$N^{\alpha \varepsilon}\Big\|\frac{R_N(x)}{N^{\frac{1}{2}+\varepsilon}}\Big\|_\alpha^{\alpha} \sim 
\frac{2^{\frac{\alpha}{2}}}{\sqrt{\pi}}\Gamma\Big(\frac{\alpha}{2}\Big).$$
It follows that 
$$\sum_{N \geq 1} \Big\|\frac{R_N(x)}{N^{\frac{1}{2}+\varepsilon}}\Big\|_\alpha^{\alpha}<+\infty.$$ 
Hence 
$$\sum_{N \geq 1} \frac{\big|\sum_{j=1}^{N}x_j\big|}{N^{\frac{1}{2}+\varepsilon}} \in L^\alpha(X_{\lamob},\P).$$
We thus conclude that for almost all $x \in X_{\lamob}$,
$$\Big|\sum_{j=1}^{N}x_j\Big| \ll N^{\frac{1}{2}+\epsilon},$$
That is, almost all point are ``good'' in the sense of RH.
\end{enumerate}

\begin{thank}
The author wishes to express his thanks to Fran\c cois Parreau, Jean-Paul Thouvenot, Mahendra Nadkarni, 
for many stimulating conversations on the subject and their sustained interest and encouragement in this work. The author is specialy
indebted to Mahendra Nadkarni for pointing to him a gap in the previous draft and for bring to his attention some deep remarks. 
He further wishes to express his thanks  to William Veech %\footnote{The last email received by the author from him was on August, 18, 2016. 
%In his email, he mentioned the work of McGehee-Pigno and Smith on the Littlewood conjecture.
%} 
and Herv\'e Queffelec for their encouragement and interest in this work. He would like also to thanks the Erwin Schr\"{o}dinger Institute and the organizers of the workshop
``the Workshop on Normal Numbers: Arithmetic, Computational and Probabilistic Aspects” for the invitation.
\end{thank}
%Therefore, by a variant of Szemerdi theorem, it follows that $H$ up to affine transform is of the form
%$\big\{0,\cdots,\frac{q}{2}\big\}$.
%To finish the proof we need the following lemma.
%\begin{lem}\label{Maxi-Combina} Let $q \geq 2$ be an integer and $H$ a subset of the set $\big\{0,\cdots,q-1\big\}$. Then, the autocorrelation $(c_l)$ of $H$ satisfy
%$$\sum_{l=1}^{q-1}c_l^2 \leq \frac{(q-1)q(2q-1)}6.$$
%\end{lem}
%\begin{proof} A straightforward computation gives that for each $l$ we have $$c_l=|H \cap (H-l)| \leq q-l.$$ From which the lemma follows.
%\end{proof}

%From Lemma \ref{Maxi-Combina}, we deduce that for a large $q$, the large quantity of $\ds \sum_{l=1}^{q-1}c_l^2$ is 
%$|H|^2.(q-1).$ Hence, 
%$$\sum_{l=1}^{q-1}c_l^2 \sim \frac{q^3}{2^3}.$$

  \end{document}